\newtheorem{lemma}{Lemma}
\newtheorem{theorem}{Theorem}
\newtheorem{remark}{{Remark}}
\newtheorem{example}{Example}
\numberwithin{theorem}{section} 
\numberwithin{equation}{section}
\numberwithin{lemma}{section}
\numberwithin{example}{section}
\numberwithin{definition}{section}
\numberwithin{remark}{section}
\def\dfrac{\displaystyle\frac}
\journal{Journal}
\begin{document}
\begin{frontmatter}

\title{Optimal preconditioning techniques for finite volume approximation of three-dimensional conservative space-fractional diffusion equations}






\author[add1]{Wei Qu}\ead{quwei@sgu.edu.cn}
\author[add2]{Siu-Long Lei}\ead{sllei@um.edu.mo}
\author[add3]{Sean Y. Hon}\ead{seanyshon@hkbu.edu.hk}
\author[add3]{Yuan-Yuan Huang\corref{cor1}}\ead{doubleyuans@hkbu.edu.hk}

\cortext[cor1]{Corresponding author.}

\address[add1]{School of Mathematics and Statistics, Shaoguan University, Shaoguan, China.}

\address[add2]{Department of Mathematics, University of Macau, Macao SAR, China}

\address[add3]{Department of Mathematics, Hong Kong Baptist University, Hong Kong SAR, China.}

\begin{abstract}
A Crank–Nicolson finite volume approximation for three-dimensional conservative space-fractional diffusion equation results in large and dense three-level Toeplitz discrete linear systems. Preconditioned Krylov subspace methods with sine transform-based preconditioners are developed to solve these systems, including the preconditioned conjugate gradient (PCG) method for the symmetric case and the preconditioned generalized minimal residual (PGMRES) method for the non-symmetric case. Moreover, we provide detailed analysis of the convergence of these Krylov subspace methods. Specifically, for the symmetric case, we prove the spectra of the preconditioned matrices are uniformly bounded in the open interval $(1/2, 3/2)$, which results in a linear convergence rate of the PCG method. For the non-symmetric case, we demonstrate that the PGMRES method also achieves a linear convergence rate independent of discretization stepsizes from the residual point of view. These results imply that the iteration counts of the PCG and PGMRES methods are uniformly bounded and independent of the matrix sizes. Numerical experiments in both symmetric and non-symmetric cases in two- and three-dimensions are conducted to confirm the optimal performance of the proposed preconditioned Krylov subspace methods.
\end{abstract}

\begin{keyword}
Crank–Nicolson finite volume approximation, Three-level Toeplitz discrete linear systems, Sine transform-based preconditioners, Preconditioned Krylov subspace methods, linear convergence rate

\noindent{\it Mathematics Subject Classification:} 65N08, 65F08, 65F10, 15B05 

\end{keyword}

\end{frontmatter}

\section{Introduction}
Fractional diffusion equations (FDEs), as generalizations of integer-order diffusion equations, have been proven to be effective models for anomalous diffusion \cite{BWM1,BWM2,CLZ1}, biology \cite{Magin1}, and finance \cite{raberto2002waiting}. Conventionally, spatial FDEs are formed by replacing second-order derivatives in classic diffusion equations with fractional Riemann-Liouville (R-L) or Riesz derivatives. 

In this paper, we consider the solution of the following three-dimensional (3D) conservative space-FDEs (SFDEs) with order $2-\alpha$ in $x$-direction, $2-\beta$ in $y$-direction and $2-\gamma$ in $z$-direction:
\begin{equation}\label{RLFDEs}
\begin{cases}\begin{array}{ll}
\frac{\partial u({\boldsymbol{x}},t)}{\partial t}-\frac{\partial}{\partial x} \mathcal{D}_{x}^\alpha u({\boldsymbol{x}}, t)-\frac{\partial}{\partial y} \mathcal{D}_{y}^\beta u({\boldsymbol{x}}, t)-\frac{\partial}{\partial z} \mathcal{D}_{z}^\gamma u({\boldsymbol{x}}, t)=f({\boldsymbol{x}}, t),\,\, ({\boldsymbol{x}}, t) \in \Omega \times(0,T], 
\end{array} \\
\begin{array}{ll}
u({\boldsymbol{x}}, t)=0, & ({\boldsymbol{x}}, t) \in\left(\mathbb{R}^3 \backslash \Omega\right) \times[0,T], \\
u({\boldsymbol{x}}, 0)=u_0({\boldsymbol{x}}), & {\boldsymbol{x}} \in \Omega,
\end{array}\end{cases}
\end{equation}
in which $0<\alpha,\beta,\gamma<1$, ${\boldsymbol{x}}=(x,y,z) \in \Omega:=(a_1, b_1) \times(a_2, b_2)\times(a_3,b_3)$. Here $u({\boldsymbol{x}},t)=u(x,y,z,t)$ represents concentration, mass or other physical quantities of interest, and $f({\boldsymbol{x}}, t)$ is a source term. Moreover, 
\begin{equation}
\mathcal{D}_{x}^{\alpha}u({\boldsymbol{x}}, t)=k_{1,+} \frac{\partial^{1-\alpha}}{\partial x^{1-\alpha}}u({\boldsymbol{x}}, t)-k_{1,-} \frac{\partial^{1-\alpha}}{\partial (-x)^{1-\alpha}}u({\boldsymbol{x}}, t),
\end{equation}
where $k_{1,\pm}$ are two positive constant diffusivity coefficients; $\frac{\partial^{1-\alpha} u(x, y,z,t)}{\partial x^{1-\alpha}}$ and $\frac{\partial^{1-\alpha} u(x, y, z, t)}{\partial (x)^{1-\alpha}}$ denote the left and right R-L fractional derivatives, respectively, which are defined by
\begin{equation}
\frac{\partial^{1-\alpha} u(x, y, z, t)}{\partial x^{1-\alpha}}:=\frac{1}{\Gamma(\alpha)}\frac{\partial}{\partial x} \int_{a_1}^x(x-s)^{\alpha-1} u(s,y,z,t) ds,
\end{equation}
and
\begin{equation}
\frac{\partial^{1-\alpha} u(x, y, z, t)}{\partial (-x)^{1-\alpha}}:=-\frac{1}{\Gamma(\alpha)}\frac{\partial}{\partial x} \int_x^{b_1}(s-x)^{\alpha-1} u(s,y,z,t) ds,
\end{equation}
with $\Gamma(\cdot)$ being the gamma function. $\mathcal{D}_{y}^{\beta}u({\boldsymbol{x}}, t)$ and $\mathcal{D}_{z}^{\gamma}u({\boldsymbol{x}}, t)$ can be analogously defined. 

Similar to their integer-order counterparts, FDEs rarely yield analytical solutions, making numerical approaches essential. In recent years, significant progresses have been made in this area, including finite difference (FD) methods \cite{CD2,LL1,lin2021stability,she2022class,SL1,TZD1,ZD1}, finite element methods \cite{bu2014galerkin,JLPZ1,zhao2015finite}, and finite volume (FV) methods \cite{donatelli2018spectral,fang2022fast,feng2015stability,fu2019finite,fu2019stability,hejazi2014stability,kong2024fast,LZTBA1,liu2021analysis,pan2017fast,pan2016fast,wang2013superfast,simmons2017finite,zheng2019efficient}. 

However, unlike classical diffusion equations, the nonlocal nature of fractional derivatives typically yields full and dense discrete linear systems that require $\mathcal{O}(N^2)$ memory storage and $\mathcal{O}(N^3)$ computational cost for the matrix size $N$. Consequently, the computational cost for large-scale simulations of FDEs become prohibitively expensive. Notably, the discrete linear systems derived from the FD and FV methods exhibit Toeplitz-like structures. 
By leveraging these structures, the matrix-vector multiplications in the Krylov subspace method can be efficiently computed using fast Fourier transforms (FFTs). However, numerical experiments demonstrate that the ill-conditioning of the coefficient matrix increasingly degrades the performance of the iterative solvers as the matrix size $N$ grows. In such cases, preconditioning techniques are employed to accelerate the convergence of Krylov subspace methods. Classical preconditioners, including circulant preconditioners \cite{lei2016multilevel,lei2013circulant}, banded preconditioners \cite{JLZ1,LYJ1,SLYL1}, approximate inverse preconditioners \cite{PKNS1,zeng2022tau}, structure preserving preconditioners \cite{DMS1}, and $\tau$-preconditioners \cite{huang2021spectral,huang2023tau,lin2023tau,qu2025novel}, have been proposed to accelerate the solution of the discrete linear systems arising from FD discretization.

Prior research has established that FD method can be applied directly to spatial FDEs in non-conservative form, whereas FV method is typically employed for conservative forms such as Equation (\ref{RLFDEs}). Furthermore, FV method exhibits higher derivation complexity than FD method due to its dependence on grid-node basis functions, resulting in more complicated coefficient matrix structures. To address these issues, developing efficient fast algorithms for solving conservative SFDEs becomes imperative. 

In early work, Wang and Du \cite{wang2013superfast} have studied a superfast-preconditioned conjugate gradient squared method for one-dimensional (1D) steady-state Riesz space FDEs. Their method reduces the computational complexity from $\mathcal{O}(N^2)$ to $\mathcal{O} (N\log N)$ per iteration and reduces the memory requirement from $\mathcal{O}(N^2)$ to $\mathcal{O}(N)$. Subsequently, Pan et al. \cite{pan2017fast,pan2016fast} extended this problem to R-L space FDEs in two dimensions, including both time-independent and time-dependent variable-coefficient cases. They respectively proposed a scaled-circulant preconditioner and an approximate inverse-circulant preconditioner for solving the linear systems arising from the FV discretization. Theoretical analyses demonstrate that for the time-independent case,
the difference between the scaled-circulant preconditioner and the coefficient matrix  equals the sum of a small-norm matrix and a low-rank matrix, while for the time-dependent problem, the spectra of the corresponding preconditioned matrices are clustered around $1$. Then rapid convergence of the preconditioned Krylov subspace methods can be expected with these properties. In \cite{donatelli2018spectral}, Donatelli et al. developed a finite volume discretization for 1D and two-dimensional (2D) conservative steady-state FDEs with variable coefficients. They revealed the intrinsic nature of the resulting sequence of coefficient matrices through the framework of generalized locally Toeplitz sequences. This analysis provides essential spectral information for designing efficient preconditioners and multigrid
methods to solve the resulting linear systems. Subsequently, Fu's group \cite{fu2019finite,fu2019stability} combined Crank-Nicolson (CN) and FV methods for solving 1D and 2D conservative SFDEs. They established the stability and convergence of the CN-FV scheme through rigorous norm analysis. To improve computational efficiency, circulant preconditioners were designed to solve the resulting linear systems, although the spectral analysis of these preconditioned matrices was not addressed. Additionally, the CN-FV scheme has been effectively extended to 3D nonlinear distributed-order conservative SFDEs, see \cite{zheng2019efficient}. Furthermore, to handle high-dimensional conservative FDEs, the alternating direction implicit method can be employed. For more details, readers are referred to \cite{chou2021finite,liu2021analysis} and the references therein. 

The aforementioned works demonstrate the strong potential of the fast algorithms for conservative SFDEs (\ref{RLFDEs}). To the best of our knowledge, there is still a lack of sine transform-based preconditioning techniques
for conservative SFDEs. In this paper, based on the CN-FV scheme developed by Fu's group \cite{fu2019finite,zheng2019efficient}, we propose a novel sine transform-based preconditioning strategy combined with the CG and GMRES methods to solve the symmetric and non-symmetric discrete linear systems arising from conservative SFDEs. The aim of this paper is twofold: 
\begin{enumerate}
    \item For the symmetric case, we propose a sine transform-based preconditioner, which can be implemented efficiently through the fast discrete sine transform (DST), for solving the symmetric CN-FV scheme (\ref{linear_sys}). As will be shown in Theorem \ref{CG_bounded_sprctrum}, the CG method is optimal in the sense that the spectra of the preconditioned matrices are uniformly bounded in the open interval $(1/2, 3/2)$.
    \item For the non-symmetric case, the preconditioned GMRES (PGMRES) method with the proposed sine transform-based preconditioner is employed to solve the auxiliary two-sided preconditioned systems (\ref{two_side_system}). We show the GMRES method for the preconditioned system (\ref{two_side_system}) has a convergence rate independent of the matrix size with the guidance from Lemma \ref{lemma:gmres}. In more detail, two points are thoroughly discussed: (i) estimating the upper and lower bounds of the eigenvalues for the Hermitian part of the coefficient matrix in the two-sided preconditioned systems (\ref{two_side_system}), see Lemma \ref{prop:eigen_S}; (ii) evaluating the upper bound of spectral radius for the skew-Hermitian part of the coefficient matrix by deriving the relationship between the real and imaginary parts of the generating function for the FV discretization matrix with a new technique, see Lemmas \ref{non_symmetric_ratio} and \ref{lemma:eigen_Skew}. Finally, the relationship between the residuals of GMRES method when applied to one-sided  and two-sided preconditioned systems are provided to support the optimal convergence of GMRES method for the one-sided preconditioned systems (\ref{one_side_system}).
    
\end{enumerate}

The paper is organized as follows. In Section \ref{Sec-2}, we introduce the CN-FV method to discretize the conservative SFDEs (\ref{RLFDEs}), and present some important properties of the coefficients obtained from FV approximations. In Section \ref{Sec-3}, we propose the sine transform-based preconditioning techniques for both symmetric and non-symmetric cases and provide rigorous theoretical  analyses that guarantee the mesh size independent convergence rates of the preconditioned Krylov subspace methods. In Section \ref{sec:num}, numerical experiments containing both symmetric and non-symmetric cases in 2D and 3D problems are presented to demonstrate the efficiency and robustness the proposed preconditioning strategies. Finally, we draw some conclusions in Section \ref{sec:con}.


\section{Crank-Nicolson and finite volume approximations}\label{Sec-2}
In this section, we first review the CN-FV approximations to conservative SFDEs (\ref{RLFDEs}). Let $\Delta t=\frac{T}{M}$, with $M$ being a positive integer, be the temporal step size under the uniform partition of $[0, T]$, and define $t_m:=m\Delta t$ for $m=0,1,\ldots,M$. Discretizing Equation \eqref{RLFDEs} in time at $t=t_{m-\frac{1}{2}}$ by the CN method arrives at 
\begin{eqnarray}\label{CN}
&&u({\boldsymbol{x}},t_{m})-\frac{\Delta t}{2}\left(\frac{\partial}{\partial x} \mathcal{D}_{x}^\alpha u({\boldsymbol{x}}, t_m)+\frac{\partial}{\partial y} \mathcal{D}_{y}^\beta u({\boldsymbol{x}}, t_m)+\frac{\partial}{\partial z} \mathcal{D}_{z}^\gamma u({\boldsymbol{x}}, t_m)\right)\\
&=&u({\boldsymbol{x}},t_{m-1})+\frac{\Delta t}{2}\left(\frac{\partial}{\partial x} \mathcal{D}_{x}^\alpha u({\boldsymbol{x}}, t_{m-1})+\frac{\partial}{\partial y} \mathcal{D}_{y}^\beta u({\boldsymbol{x}}, t_{m-1})+\frac{\partial}{\partial z} \mathcal{D}_{z}^\gamma u({\boldsymbol{x}}, t_{m-1})\right)\nonumber\\
&&+\Delta tf({\boldsymbol{x}}, t_{m-\frac{1}{2}})+\mathcal{O}(\Delta t^3),~~~~ \quad 1 \leq m \leq M. \nonumber
\end{eqnarray}

Furthermore, let $n_i\,(i=1,2,3)$ be three positive integers, and denote 
\begin{eqnarray*}
 h_1:=\frac{b_1-a_1}{n_1+1},\, x_p:=a_1+ph_1,\, 0\leq p\leq n_1+1,\\
 h_2:=\frac{b_2-a_2}{n_2+1},\, y_q:=a_2+qh_2,\,0\leq q\leq n_2+1,\\  
 h_3:=\frac{b_3-a_3}{n_3+1},\, z_r:=a_3+rh_3,\,0\leq r\leq n_3+1,
\end{eqnarray*}
where $h_i\,(i=1,2,3)$ are the spatial steps, and $x_p,y_q,z_r$ are the sets of grid points. Denote $S_h(\Omega)$ be the space of continuous, piecewise linear functions defined on the spatial partition, satisfying the zeros boundary condition on $\partial \Omega$. Then, the FV solution $u_h\left(\boldsymbol{x},t_m\right)$ of problem \eqref{RLFDEs} can be expressed in terms of the nodal basis functions
\begin{equation}\label{uh}
u_h\left(\boldsymbol{x},t_m\right):=\sum_{p=1}^{n_1} \sum_{q=1}^{n_2}\sum_{r=1}^{n_3} u_{p, q, r}^m \phi_{p}^x(x) \phi_{q}^y(y) \phi_{r}^z(z),\, m=1,2,\ldots,M,
\end{equation}
where $u_{p, q, r}^m$ be the FV approximations to $u(x_{p},y_{q},z_{r},t_m)$ for $p=1,\ldots,n_1$, $q=1,\ldots,n_2$, $r=1,\ldots,n_3$, and $\phi_{p}^x(x)$, $\phi_{q}^y(y)$ and $\phi_{r}^z(z)$ are the nodal basis functions in $x$-, $y$- and $z$-directions, respectively, with
\begin{equation*}
\begin{aligned}
&\begin{aligned}
& \phi_0^x(x)= \begin{cases}\frac{x_1-x}{h_1}, & x \in\left[a_1, x_1\right], \\
0, & \text { elsewhere, }\end{cases} \\
& \phi_i^x(x)=\left\{\begin{array}{ll}
\frac{x-x_{i-1}}{h_1}, & x \in\left[x_{i-1}, x_i\right], \\
\frac{x_{i+1}-x}{h_1}, & x \in\left[x_i, x_{i+1}\right], \\
0, & \text { elsewhere, }
\end{array} \quad 1 \leq i \leq n_1,\right. \\
& \phi_{n_1+1}^x(x)= \begin{cases}\frac{x-x_{n_1}}{h_1}, & x \in\left[x_{n_1}, b_1\right], \\
0, & \text { elsewhere}.\end{cases}
\end{aligned}\\
\end{aligned}
\end{equation*}
$\phi_{q}^y(y)$ and $\phi_{r}^z(z)$ can be defined similarly.

Now, by replacing the true solution $u\left(\boldsymbol{x}, t_m\right)$ in Equation (\ref{RLFDEs}) with the FV solution $u_h\left(\boldsymbol{x}, t_m\right)$, integrating both sides of the governing Equation (\ref{CN}) over $\Omega_{i, j, k}:=\left[x_{i-1/2}, x_{i+1/2}\right] \times\left[y_{j-1/2}, y_{j+1/2}\right]\times\left[z_{k-1/2}, z_{k+1/2}\right]$ for $1 \leq i \leq n_1, 1 \leq j \leq n_2, 1\leq k \leq n_3$ with $u\left(x, y, z, t_m\right)$ and omitting the temporal truncation error term, we obtain the following CN-FV scheme
for model (\ref{RLFDEs}):
\begin{eqnarray}\label{fvm_sol}
& &\sum_{p=1}^{n_1} \sum_{q=1}^{n_2} \sum_{r=1}^{n_3} u_{p, q, r}^m \int_{x_{i-1/2}}^{x_{i+1/2}} \int_{y_{j-1/2}}^{y_{j+1/2}} \int_{z_{k-1/2}}^{z_{k+1/2}}\phi_p^x(x) \phi_q^y(y) \phi_r^z(z) dxdydz \nonumber\\
& &\quad-\left.\frac{\Delta t}{2} \sum_{p=1}^{n_1} \sum_{q=1}^{n_2} \sum_{r=1}^{n_3} u_{p, q, r}^m\left(\mathcal{D}_{x}^\alpha \phi_p^x(x)\right)\right|_{x_{i-1/2}} ^{x_{i+1/2}} \int_{y_{j-1/2}}^{y_{j+1/2}} \phi_q^y(y) d y \int_{z_{k-1/2}}^{z_{k+1/2}} \phi_r^z(z) d z\nonumber\\
& & \quad-\left.\frac{\Delta t}{2} \sum_{p=1}^{n_1} \sum_{q=1}^{n_2} \sum_{r=1}^{n_3} u_{p, q, r}^m\left(\mathcal{D}_{y}^\beta \phi_q^y(y)\right)\right|_{y_{j-1/2}} ^{y_{j+1/2}} \int_{x_{i-1 / 2}}^{x_{i+1 / 2}} \phi_p^x(x) d x 
\int_{z_{k-1/2}}^{z_{k+1/2}} \phi_r^z(z) d z\nonumber\\
& &\quad-\left.\frac{\Delta t}{2} \sum_{p=1}^{n_1} \sum_{q=1}^{n_2} \sum_{r=1}^{n_3} u_{p, q, r}^m\left(\mathcal{D}_{z}^\gamma \phi_r^z(z)\right)\right|_{z_{k-1/2}} ^{z_{k+1/2}} \int_{x_{i-1 / 2}}^{x_{i+1 / 2}} \phi_p^x(x) d x 
\int_{y_{j-1/2}}^{y_{j+1/2}} \phi_q^y(y) d y\nonumber\\
&=& \sum_{p=1}^{n_1} \sum_{q=1}^{n_2} \sum_{r=1}^{n_3} u_{p, q, r}^{m-1} \int_{x_{i-1/2}}^{x_{i+1/2}} \int_{y_{j-1/2}}^{y_{j+1/2}} \int_{z_{k-1/2}}^{z_{k+1/2}}\phi_p^x(x) \phi_q^y(y) \phi_r^z(z) dxdydz \\
& & \quad+\left.\frac{\Delta t}{2} \sum_{p=1}^{n_1} \sum_{q=1}^{n_2} \sum_{r=1}^{n_3} u_{p, q, r}^{m-1}\left(\mathcal{D}_{x}^\alpha \phi_p^x(x)\right)\right|_{x_{i-1/2}} ^{x_{i+1/2}} \int_{y_{j-1/2}}^{y_{j+1/2}} \phi_q^y(y) d y \int_{z_{k-1/2}}^{z_{k+1/2}} \phi_r^z(z) d z\nonumber\\
& & \quad+\left.\frac{\Delta t}{2} \sum_{p=1}^{n_1} \sum_{q=1}^{n_2} \sum_{r=1}^{n_3} u_{p, q, r}^{m-1}\left(\mathcal{D}_{y}^\beta \phi_q^y(y)\right)\right|_{y_{j-1/2}} ^{y_{j+1/2}} \int_{x_{i-1/2}}^{x_{i+1/2}} \phi_p^x(x) d x 
\int_{z_{k-1/2}}^{z_{k+1/2}} \phi_r^z(z) d z\nonumber\\
& & \quad+\left.\frac{\Delta t}{2} \sum_{p=1}^{n_1} \sum_{q=1}^{n_2} \sum_{r=1}^{n_3} u_{p, q, r}^{m-1}\left(\mathcal{D}_{z}^\gamma \phi_r^z(z)\right)\right|_{z_{k-1/2}} ^{z_{k+1/2}} \int_{x_{i-1/2}}^{x_{i+1/2}} \phi_p^x(x) d x 
\int_{y_{j-1/2}}^{y_{j+1/2}} \phi_q^y(y) d y\nonumber\\
& & \quad+\Delta t \int_{x_{i+1/2}}^{x_{{i+1 / 2}}} \int_{y_{j-1/2}}^{y_{{j+1/2}}}\int_{z_{k-1/2}}^{z_{{k+1/2}}} f\left(x, y, z, t_{m-1 / 2}\right) dxdydz.\nonumber
\end{eqnarray}

By some straightforward computations, for $p=1,2,\ldots,n_1$, we have 
\begin{equation}
\int_{x_{i-1 / 2}}^{x_{i+1 / 2}} \phi_p^x(x) d x=\frac{h_1}{8} \begin{cases}1 & |p-i|=1, \\ 6, & p=i, \\ 0, & \text { otherwise. }\end{cases}
\end{equation}
and 
\begin{equation}
\begin{aligned}
\left.\frac{\partial^{1-\alpha} \phi_p^x(x)}{\partial x^{1-\alpha}}\right|_{x=x_{i-1/2}} & =\frac{1}{h_1^{1-\alpha} \Gamma(\alpha+1)} \begin{cases}0, & p>i, \\
s_{i-p}^{(\alpha)}, & p \leq i,\end{cases} \\
\left.\frac{\partial^{1-\alpha} \phi_p^x(x)}{\partial x^{1-\alpha}}\right|_{x=x_{i+1 / 2}} & =\frac{1}{h_1^{1-\alpha} \Gamma(\alpha+1)} \begin{cases}0, & p>i+1, \\
s_{i-p+1}^{(\alpha)}, & p \leq i+1,\end{cases} \\
\left.\frac{\partial^{1-\alpha} \phi_p^x(x)}{\partial (-x)^{1-\alpha}}\right|_{x=x_{i-1 / 2}} & =\frac{1}{h_1^{1-\alpha} \Gamma(\alpha+1)} \begin{cases}s_{p-i+1}^{(\alpha)}, & p \geq i-1, \\
0, & p<i-1,\end{cases} \\
\left.\frac{\partial^{1-\alpha} \phi_p^x(x)}{\partial (-x)^{1-\alpha}}\right|_{x=x_{i+1 / 2}} & =\frac{1}{h_1^{1-\alpha} \Gamma(\alpha+1)} \begin{cases}s_{p-i}^{(\alpha)}, & p \geq i, \\
0, & p<i,\end{cases}
\end{aligned}
\end{equation}
where
\begin{equation}
s_i^{(\alpha)}= \begin{cases}\left(\frac{1}{2}\right)^{\alpha}, & i=0, \\ \left(\frac{3}{2}\right)^{\alpha}-2\left(\frac{1}{2}\right)^{\alpha}, & i=1, \\ \left(i+\frac{1}{2}\right)^{\alpha}-2\left(i-\frac{1}{2}\right)^{\alpha}+\left(i-\frac{3}{2}\right)^{\alpha}, & 2 \leq i \leq n_1.
\end{cases}
\end{equation}
It is remarked that the above results are also valid for $\{\phi_q^y(y)\}_{q=1}^{n_2}$ and $\{\phi_r^z(z)\}_{r=1}^{n_3}$ with $h_1$ replaced by $h_2$ and $h_3$ and $\alpha$ replaced by $\beta$ and $\gamma$, respectively.

Define $N:=n_1n_2n_3$ dimensional vectors $\mathbf{u}^m$ and $\mathbf{F}^{m-1/2}$ by labeling the index in the $x$-direction first, $y$-direction second, and $z$-direction last, that is,
\begin{equation}
\mathbf{u}^m=\left[u_{1,1,1}^m, \ldots, u_{n_1, 1,1}^m, \ldots, u_{1, n_2, 1}^m, \ldots, u_{n_1, n_2, 1}^m, \ldots, u_{1,1, n_3}^m, \ldots, u_{n_1, n_2, n_3}^m\right]^{\top},
\end{equation}
and 
\begin{equation}
\mathbf{F}^{m-1/2}=\left[f_{1,1,1}^{m-1/2}, \ldots, f_{n_1, 1,1}^{m-1/2}, \ldots, f_{1, n_2, 1}^{m-1/2}, \ldots, f_{n_1, n_2, 1}^{m-1/2}, \ldots, f_{1,1, n_3}^{m-1/2}, \ldots, f_{n_1, n_2, n_3}^{m-1/2}\right]^{\top},
\end{equation}
with 
\begin{equation}
{f}_{i, j, k}^{m-1/2}=\frac{1}{h_1 h_2  h_3} \int_{x_{i-1/2}}^{x_{i+1/2}} \int_{y_{j-1/2}}^{y_{j+1/2}} \int_{z_{k-1 / 2}}^{z_{k+1/2}}f\left(x, y, z, t_{m-1/2}\right)dzdydx
\end{equation}

For simplicity, we denote $\delta_i= \begin{cases}\alpha, & i=1 \\ \beta, & i=2\\ \gamma, &  i =3 \end{cases},$ then
the CN-FV scheme (\ref{fvm_sol}) can be reformulated in the matrix form given in \cite{fu2019finite,zheng2019efficient}
\begin{eqnarray}\label{original_matrix_form}
\begin{footnotesize}
\begin{aligned}
& \left(\mathbf{A}_{n_3} \otimes \mathbf{A}_{n_2} \otimes \mathbf{A}_{n_1}+\eta_\alpha \mathbf{A}_{n_3} \otimes \mathbf{A}_{n_2} \otimes \mathbf{B}_{n_1}^{\alpha}+\eta_\beta \mathbf{A}_{n_3} \otimes \mathbf{B}_{n_2}^{\beta} \otimes \mathbf{A}_{n_1}+\eta_\gamma \mathbf{B}_{n_3}^{\gamma} \otimes \mathbf{A}_{n_2} \otimes \mathbf{A}_{n_1}\right) \mathbf{u}^m \\
&=\left(\mathbf{A}_{n_3} \otimes \mathbf{A}_{n_2} \otimes \mathbf{A}_{n_1}-\eta_\alpha \mathbf{A}_{n_3} \otimes \mathbf{A}_{n_2} \otimes \mathbf{B}_{n_1}^{\alpha}-\eta_\beta \mathbf{A}_{n_3} \otimes \mathbf{B}_{n_2}^{\beta} \otimes \mathbf{A}_{n_1}-\eta_\gamma \mathbf{B}_{n_3}^{\gamma} \otimes \mathbf{A}_{n_2} \otimes \mathbf{A}_{n_1}\right)  \mathbf{u}^{m-1}+\tau \mathbf{F}^{m-1/2},
\end{aligned}
\end{footnotesize}
\end{eqnarray}
with $m=1,2,\ldots M$. For $i=1,2,3$, $\eta_{\delta_i}=\frac{\Delta t}{2\Gamma(\delta_i+1)h_i^{2-\delta_i}}$, $\mathbf{A}_{n_i}$ is a $n_i$-by-$n_i$ symmetric tridiagonal Toeplitz matrix defined as 
\begin{equation}\label{tridiag_matrix}
\mathbf{A}_{n_i}=\frac{1}{8}\left[\begin{array}{ccccc}
6 & 1 & 0 & \cdots & 0 \\
1 & 6 & 1 & \ddots & \vdots \\
0 & 1 & 6 & \ddots & 0 \\
\vdots & \ddots & \ddots & \ddots & 1 \\
0 & \cdots & 0 & 1 & 6
\end{array}\right] \in \mathbb{R}^{n_i\times n_i},
\end{equation}
and $\mathbf{B}_{n_i}^{\delta_i}$ of order $n_i$ is a stiffness matrix, which is expressed as 
\begin{equation}\label{stiff_matrix}
\mathbf{B}_{n_i}^{\delta_i}=k_{i,+}\mathbf{T}^{\delta_i}_{n_i}+k_{i,-}\left(\mathbf{T}^{\delta_i}_{n_i}\right)^{\top}
\end{equation}
where $\mathbf{T}^{\delta_i}_{n_i}$ is a Toeplitz matrix such that 
\begin{equation}\label{T_ni}
\mathbf{T}_{n_i}^{\delta_i}:=\left[\begin{array}{llllll}
q_1^{(\delta_i)} & q_0^{(\delta_i)} & 0 & \cdots & 0\\
q_2^{(\delta_i)} & q_1^{(\delta_i)} & q_0^{(\delta_i)} & \ddots & \vdots\\
\vdots & \ddots & \ddots & \ddots & 0\\
\vdots & \ddots & \ddots & \ddots & q_0^{(\delta_i)}\\
q_{n_i}^{(\delta_i)} & q_{n_i-1}^{(\delta_i)} & \cdots &q_2^{(\delta_i)} & q_1^{(\delta_i)}
\end{array}\right] \in \mathbb{R}^{n_i\times n_i},
\end{equation}
with
\begin{equation}\label{coef_expression}
q_k^{(\delta_i)}= 
\begin{cases}-s_0^{(\delta_i)}, & k=0, \\ s_{k-1}^{(\delta_i)}-s_k^{(\delta_i)}, & 1 \leq k \leq n_i,
\end{cases}
\end{equation}
in which 
\begin{equation}
s_k^{(\delta_i)}= \begin{cases}\left(\frac{1}{2}\right)^{\delta_i}, & k=0, \\ \left(\frac{3}{2}\right)^{\delta_i}-2\left(\frac{1}{2}\right)^{\delta_i}, & k=1, \\ \left(k+\frac{1}{2}\right)^{\delta_i}-2\left(k-\frac{1}{2}\right)^{\delta_i}+\left(k-\frac{3}{2}\right)^{\delta_i}, & 2 \leq k \leq n_i.
\end{cases}
\end{equation}
\begin{remark}
{\rm
As discussed in \cite{fu2019finite,zheng2019efficient}, the scheme in (\ref{original_matrix_form}) can be shown to be unconditionally stable and convergent, achieving second-order accuracy in time and $\min\{1+\alpha,1+\beta,1+\gamma\}$ accuracy in space with respect to a weighted discrete norm. In particular, when $k_{i,+}=k_{i,-}\,(i=1,2,3)$, (\ref{RLFDEs}) reduces to a symmetry problem, then the spatial accuracy of the scheme (\ref{original_matrix_form}) can be improved to second-order, as detailed in \cite{fu2019stability}. 
}
\end{remark}

For any real symmetric matrices $\mathbf{C}_1, \mathbf{C}_2 \in \mathbb{R}^{k \times k}$, denote $\mathbf{C}_2 \succ$ (or $\succeq$) $\mathbf{C}_1$ if $\mathbf{C_2-C_1}$ is positive definite (or semi-definite). Especially, we denote $\mathbf{C}_2 \succ$ (or $\succeq$) $\mathbf{O}$, if $\mathbf{C}_2$ itself is positive definite
(or semi-definite). Also, $\mathbf{C}_1 \prec$ (or $\preceq$) $\mathbf{C}_2$ and $\mathbf{O} \prec$ (or $\preceq$) $\mathbf{C}_2$  have the same meanings as those
of $\mathbf{C}_2 \succ$ (or $\succeq$) $\mathbf{C}_1$ and $\mathbf{C}_2 \succ$ (or $\succeq$) $\mathbf{O}$, respectively.

In addition, we denote the symmetric part and the skew-symmetric part of a real square matrix $\mathbf{Z}$ as
$$
\mathcal{H}(\mathbf{Z}):=\frac{\mathbf{Z}+\mathbf{Z}^{\top}}{2} \quad \textrm{and} \quad \mathcal{S}(\mathbf{Z}):=\frac{\mathbf{Z}-\mathbf{Z}^{\top}}{2}.
$$
Furthermore, denote
\begin{equation*}
\mathbf{A}_N:=\underset{i=3}{\overset{1}{\otimes}}
\mathbf{A}_{n_i},~~\mathbf{A}_{n_1^{+}}:=\mathbf{A}_{n_3} \otimes \mathbf{A}_{n_2},~~\mathbf{A}_{n_3^{-}}:=\mathbf{A}_{n_2} \otimes \mathbf{A}_{n_1}.
\end{equation*}

Then the matrix form of (\ref{original_matrix_form}) can be expressed as the simplified form
\begin{equation}\label{linear_sys}
\begin{small}
\begin{aligned}
\mathbf{A}\mathbf{u}^m &:= \left(\mathbf{A}_N +\eta_\alpha \mathbf{A}_{n_1^{+}} \otimes \mathbf{B}_{n_1}^{\alpha}+\eta_\beta \mathbf{A}_{n_3} \otimes \mathbf{B}_{n_2}^{\beta} \otimes \mathbf{A}_{n_1}+\eta_\gamma \mathbf{B}_{n_3}^{\gamma} \otimes \mathbf{A}_{n_3^{-}}\right) \mathbf{u}^m \\
& =\left(\mathbf{A}_N -\eta_\alpha \mathbf{A}_{n_1^{+}} \otimes \mathbf{B}_{n_1}^{\alpha}-\eta_\beta \mathbf{A}_{n_3} \otimes \mathbf{B}_{n_2}^{\beta} \otimes \mathbf{A}_{n_1}-\eta_\gamma \mathbf{B}_{n_3}^{\gamma} \otimes \mathbf{A}_{n_3^{-}}\right)   \mathbf{u}^{m-1}+\tau \mathbf{F}^{m-1/2}\\
&=:\mathbf{b}^{m},
\end{aligned}
\end{small}
\end{equation}

\begin{remark}
{\rm
Note that when $k_{i,+}=k_{i,-}\,(i=1,2,3)$, scheme (\ref{linear_sys}) reduces to a symmetric positive definite (SPD) problem. In this case, the CG method is more preferred to solve this kind of large-scale and dense linear systems. Otherwise, the non-symmetric systems will be solved by the GMRES method.
}
\end{remark}

The properties of the elements $q_i^{(\delta_i)}$ in \eqref{T_ni} are presented in the following two lemmas, which are essential for analyzing the convergence behaviors of the CG and GMRES methods in the subsequent section.

\begin{lemma}\label{coef_prop1}\cite{fu2019stability,qu2021fast}
For all $0<\delta_i<1$, and let $\{q_{k}^{(\delta_i)}\}_{k=0}^{n_i}$ be defined as (\ref{coef_expression}). Then we have
\begin{description}
  \item[(i)]  $q_1^{(\delta_i)}>0$, $q_0^{(\delta_i)}+q_2^{(\delta_i)}<0$, and $q_k^{(\delta_i)}<0$ for $k=3,\cdots$;
  \item[(ii)]  $\sum\limits_{k=0}^{\infty}q_{k}^{(\delta_i)}=0$;
  \item[(iii)]  $q_{1}^{(\delta_i)}+\sum\limits_{k=0,k\ne1}^{s}q_{k}^{(\delta_i)}>0$ for $s\geq 2$.
\end{description}
\end{lemma}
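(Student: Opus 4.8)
The plan is to verify each of the three claims directly from the closed-form definition
$$
q_k^{(\delta_i)}=\begin{cases}-s_0^{(\delta_i)},&k=0,\\ s_{k-1}^{(\delta_i)}-s_k^{(\delta_i)},&1\le k\le n_i,\end{cases}
$$
using the explicit values $s_0^{(\delta_i)}=(1/2)^{\delta_i}$, $s_1^{(\delta_i)}=(3/2)^{\delta_i}-2(1/2)^{\delta_i}$, and $s_k^{(\delta_i)}=(k+\tfrac12)^{\delta_i}-2(k-\tfrac12)^{\delta_i}+(k-\tfrac32)^{\delta_i}$ for $k\ge2$. Throughout write $\delta=\delta_i$ for brevity and note $0<\delta<1$. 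The key analytic fact I would isolate first is that the function $g(x)=x^{\delta}$ is strictly concave on $(0,\infty)$, so second-order central differences $g(k+\tfrac12)-2g(k-\tfrac12)+g(k-\tfrac32)$ are strictly negative; equivalently $s_k^{(\delta)}<0$ for $k\ge2$, and moreover $|s_k^{(\delta)}|$ is decreasing in $k$ because the third derivative of $x^\delta$ is positive (the sequence of second differences of a function with positive third derivative is increasing, hence the negative quantities $s_k^{(\delta)}$ increase toward $0$).

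For \textbf{(i)}: $q_1^{(\delta)}=s_0^{(\delta)}-s_1^{(\delta)}=(1/2)^\delta-\big((3/2)^\delta-2(1/2)^\delta\big)=3(1/2)^\delta-(3/2)^\delta$; I would show this is positive by writing it as $3\cdot 2^{-\delta}-3^\delta 2^{-\delta}=2^{-\delta}(3-3^\delta)>0$ since $3^\delta<3$ for $\delta<1$. Next $q_0^{(\delta)}+q_2^{(\delta)}=-s_0^{(\delta)}+s_1^{(\delta)}-s_2^{(\delta)}=-q_1^{(\delta)}-s_2^{(\delta)}$; wait — more cleanly, $q_0^{(\delta)}+q_2^{(\delta)}=-s_0^{(\delta)}+(s_1^{(\delta)}-s_2^{(\delta)})$, and substituting the explicit forms reduces to a comparison among $2^{-\delta},(3/2)^\delta,(5/2)^\delta,(1/2)^\delta$ that again follows from strict concavity of $x^\delta$ (it is exactly $-s_0^{(\delta)}-\big(s_2^{(\delta)}-s_1^{(\delta)}\big)$, and I expect $s_2^{(\delta)}-s_1^{(\delta)}$ has a sign making the whole thing negative; this will need a short direct estimate). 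Finally, for $k\ge3$, $q_k^{(\delta)}=s_{k-1}^{(\delta)}-s_k^{(\delta)}$ with both indices $\ge2$, and by the monotonicity $|s_{k-1}^{(\delta)}|\ge|s_k^{(\delta)}|$ with both negative, we get $s_{k-1}^{(\delta)}\le s_k^{(\delta)}<0$, hence $q_k^{(\delta)}\le 0$; strict negativity comes from strict convexity of the third-derivative argument (i.e. $x^\delta$ has strictly positive third derivative), so $q_k^{(\delta)}<0$.

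For \textbf{(ii)}: I would compute the partial sum $\sum_{k=0}^{s}q_k^{(\delta)}=-s_0^{(\delta)}+\sum_{k=1}^{s}\big(s_{k-1}^{(\delta)}-s_k^{(\delta)}\big)=-s_0^{(\delta)}+s_0^{(\delta)}-s_s^{(\delta)}=-s_s^{(\delta)}$ by telescoping, and then show $s_s^{(\delta)}\to 0$ as $s\to\infty$: indeed for $s\ge2$, $s_s^{(\delta)}=(s+\tfrac12)^\delta-2(s-\tfrac12)^\delta+(s-\tfrac32)^\delta$, and by the mean value form of the second difference $s_s^{(\delta)}=\delta(\delta-1)\xi_s^{\delta-2}$ for some $\xi_s\in(s-\tfrac32,s+\tfrac12)$, which $\to0$ since $\delta-2<0$. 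Hence $\sum_{k=0}^{\infty}q_k^{(\delta)}=0$.

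For \textbf{(iii)}: again using the telescoping identity, $q_1^{(\delta)}+\sum_{k=0,k\ne1}^{s}q_k^{(\delta)}=\sum_{k=0}^{s}q_k^{(\delta)}=-s_s^{(\delta)}$, so the claim is precisely that $s_s^{(\delta)}<0$ for $s\ge2$, which is the strict concavity fact already established. This makes (iii) essentially a corollary of the analysis behind (i) and (ii). \textbf{The main obstacle} I anticipate is the middle inequality of (i), $q_0^{(\delta)}+q_2^{(\delta)}<0$, since unlike the others it does not collapse to a one-line monotonicity statement and seems to require a genuine (though elementary) estimate on the quantity $3\cdot 2^{-\delta}-2\cdot(3/2)^\delta+(5/2)^\delta$ — I would handle it by checking the endpoints $\delta\to0^+$ and $\delta\to1^-$ and then establishing monotonicity or convexity of this expression in $\delta$, or alternatively by invoking the known result from \cite{fu2019stability,qu2021fast} that these $q_k$ coincide with coefficients whose sign pattern is already recorded there; everything else is routine telescoping plus concavity/convexity of $x\mapsto x^\delta$.
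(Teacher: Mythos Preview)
The paper does not prove this lemma at all; it is stated with a citation to \cite{fu2019stability,qu2021fast} and no argument is given. So there is nothing to compare against, and your self-contained verification is a genuine addition.

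Your proof is essentially correct. The telescoping identity $\sum_{k=0}^{s}q_k^{(\delta)}=-s_s^{(\delta)}$ is the right engine: it immediately gives (ii) once $s_s^{(\delta)}\to 0$ (your mean-value estimate $s_s^{(\delta)}=\delta(\delta-1)\xi_s^{\delta-2}$ is fine), and it reduces (iii) to the single fact $s_s^{(\delta)}<0$ for $s\ge2$, which is just strict concavity of $x\mapsto x^\delta$. For (i), your arguments for $q_1^{(\delta)}>0$ and for $q_k^{(\delta)}<0$ when $k\ge3$ (via monotonicity of the second differences $s_k^{(\delta)}$, using that the third derivative $\delta(\delta-1)(\delta-2)x^{\delta-3}$ is positive) are clean and correct.

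Two small corrections. First, an arithmetic slip: the quantity you need to analyze for $q_0^{(\delta)}+q_2^{(\delta)}<0$ is
\[
q_0^{(\delta)}+q_2^{(\delta)}=3\Big(\tfrac{3}{2}\Big)^{\delta}-4\Big(\tfrac{1}{2}\Big)^{\delta}-\Big(\tfrac{5}{2}\Big)^{\delta},
\]
not the expression you wrote. This equals $-2$ at $\delta=0$, equals $0$ at $\delta=1$, and is strictly increasing on $(0,1)$ (its $\delta$-derivative is easily seen to be positive there), which completes the estimate along exactly the lines you proposed. Second, in the paragraph on $q_k^{(\delta)}<0$ for $k\ge3$ you should state that $s_k^{(\delta)}$ is strictly increasing in $k$ for $k\ge2$ (so $s_{k-1}^{(\delta)}<s_k^{(\delta)}$, hence $q_k^{(\delta)}<0$); the phrasing via $|s_k^{(\delta)}|$ decreasing is equivalent but slightly obscures that the argument needs $k-1\ge2$, which is exactly why the claim starts at $k=3$.
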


\begin{lemma}\label{coef_prop2}
For all $0<\delta_i<1$, and let $q_{k}^{(\delta_i)}$ be defined as (\ref{coef_expression}). Then we have
\begin{equation}
q_0^{(\delta_i)}+q_2^{(\delta_i)}<q_3^{(\delta_i)}<\cdots<q_{n_i}^{(\delta_i)}<\cdots.
\end{equation}
\end{lemma}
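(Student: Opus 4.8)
The plan is to prove the monotonicity chain $q_0^{(\delta_i)}+q_2^{(\delta_i)}<q_3^{(\delta_i)}<q_4^{(\delta_i)}<\cdots$ by working directly from the closed-form expression of the $s_k^{(\delta_i)}$ and the definition $q_k^{(\delta_i)}=s_{k-1}^{(\delta_i)}-s_k^{(\delta_i)}$ for $k\geq 1$. Since $\delta:=\delta_i\in(0,1)$ is fixed throughout, I would drop the superscript and write $q_k,s_k$. For $k\geq 3$ one has, using the $2\leq k\leq n_i$ branch of $s_k$,
\[
q_k-q_{k-1}=(s_{k-1}-s_k)-(s_{k-2}-s_{k-1})=2s_{k-1}-s_k-s_{k-2},
\]
which for $k\geq 4$ (so that $k-2\geq 2$) equals $-\big[(k-\tfrac12)^{\delta}-3(k-\tfrac32)^{\delta}+3(k-\tfrac52)^{\delta}-(k-\tfrac72)^{\delta}\big]$, i.e. minus the third forward difference of the function $g(x)=x^{\delta}$ evaluated at the appropriate nodes. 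The key analytic fact is that $g(x)=x^{\delta}$ with $0<\delta<1$ has $g'''(x)=\delta(\delta-1)(\delta-2)x^{\delta-3}>0$ on $(0,\infty)$, so every third divided difference (over positive arguments with step $1$) is strictly positive; hence $q_k-q_{k-1}<0$... wait, that gives the wrong sign, so I must be careful: a convex-type argument shows the third difference of $x^\delta$ is positive, making $2s_{k-1}-s_k-s_{k-2}$ negative, which would give $q_k<q_{k-1}$. So the right move is to instead look at $q_{k-1}-q_k$ or, better, to recognize that $s_k$ itself is a second difference of $x^\delta$, namely $s_k=\big(k+\tfrac12\big)^{\delta}-2\big(k-\tfrac12\big)^{\delta}+\big(k-\tfrac32\big)^{\delta}$ for $k\ge 2$, which by $g''>0$... no, $g''(x)=\delta(\delta-1)x^{\delta-2}<0$, so $s_k<0$ for $k\geq 2$, and $q_k=s_{k-1}-s_k$. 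The cleanest path is therefore: write $q_k$ for $k\geq 3$ directly as a combination of four evaluations of $g$, and express $q_k-q_{k-1}$ for $k\ge 4$ as $-\Delta^3 g$ at nodes spaced by $1$, then invoke the sign of $g'''$ via the mean-value form $\Delta^3 g(x)=g'''(\xi)$ for some $\xi$ in the spanned interval. Since $g'''>0$ we get $q_k-q_{k-1}<0$ for $k\ge 4$...

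This sign discrepancy means I have the direction of the inequality backwards unless there is an extra negation hidden in $q_k=s_{k-1}-s_k$ versus $s_k-s_{k-1}$; I would resolve this at the outset by computing $q_3,q_4,q_5$ numerically for, say, $\delta=1/2$ to pin down the true direction, and then structure the general argument to match. Concretely, I expect the correct statement of the step for $k\ge 4$ to be $q_k-q_{k-1}=2s_{k-1}-s_{k-2}-s_k$, and after substituting the second-difference forms of $s_{k-2},s_{k-1},s_k$ this collapses to a single fourth-order combination of $g$ at the nodes $k-\tfrac72,\dots,k+\tfrac12$; recognizing it as $-\Delta^4 g$ (or $+\Delta^4 g$) and using $g^{(4)}(x)=\delta(\delta-1)(\delta-2)(\delta-3)x^{\delta-4}$, which is the product of two negatives and two negatives, hence positive, gives a definite sign. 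So the generic inequalities $q_k<q_{k-1}$ or $q_k>q_{k-1}$ follow from $g^{(4)}>0$ together with a mean-value representation of the fourth forward difference; I would cite the standard fact $\Delta^4 g(x)=g^{(4)}(\xi)$ for some $\xi\in(x,x+4)$ when the step is $1$.

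The boundary case $k=3$ must be handled separately because $s_1$ uses the special branch $s_1=(\tfrac32)^{\delta}-2(\tfrac12)^{\delta}$ rather than the generic three-term formula, and the claim there is the strict inequality $q_0+q_2<q_3$. Writing this out, $q_0+q_2=-s_0+s_1-s_2=-(\tfrac12)^{\delta}+\big[(\tfrac32)^{\delta}-2(\tfrac12)^{\delta}\big]-\big[(\tfrac52)^{\delta}-2(\tfrac32)^{\delta}+(\tfrac12)^{\delta}\big]$ and $q_3=s_2-s_3=\big[(\tfrac52)^{\delta}-2(\tfrac32)^{\delta}+(\tfrac12)^{\delta}\big]-\big[(\tfrac72)^{\delta}-2(\tfrac52)^{\delta}+(\tfrac32)^{\delta}\big]$, so $q_3-(q_0+q_2)$ becomes an explicit combination of $(\tfrac12)^{\delta},(\tfrac32)^{\delta},(\tfrac52)^{\delta},(\tfrac72)^{\delta}$ with fixed rational-looking coefficients; I would collect terms and show the resulting expression is strictly positive for all $\delta\in(0,1)$, either by recognizing it again as a signed higher difference of $g$ over the nodes $\tfrac12,\tfrac32,\tfrac52,\tfrac72$ (so that the $g^{(4)}>0$ argument applies once more, now on a fixed interval), or, if the combination is not a clean difference, by treating it as a function of $\delta$, checking it vanishes at the endpoints $\delta=0$ and $\delta=1$ and is, say, concave in between, or by a direct convexity/monotonicity estimate. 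I would also record that part (i) of Lemma \ref{coef_prop1} already gives $q_0+q_2<0$ and $q_k<0$ for $k\ge 3$, which can be combined with the chain to keep everything consistent, and that the $s\ge 2$ partial-sum positivity in Lemma \ref{coef_prop1}(iii) is not needed here.

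The main obstacle I anticipate is twofold: first, getting the direction of the inequality chain correct given the alternating signs in $q_k=s_{k-1}-s_k$ together with $s_k<0$ for $k\ge2$ — this is pure bookkeeping but easy to botch, so I would anchor it with a numerical sanity check at one value of $\delta$. Second, and more substantively, the $k=3$ endpoint: because $s_1$ does not follow the generic pattern, $q_0+q_2<q_3$ is not just "one more instance" of the generic step, so the higher-difference-of-$g$ argument may not apply verbatim and I may need a small ad hoc estimate showing a specific linear combination of $(\tfrac12)^{\delta},\dots,(\tfrac72)^{\delta}$ is positive on $(0,1)$; the cleanest tool there is likely strict convexity of $t\mapsto c^{t}$ (equivalently, writing the combination as a positive coefficient times a difference of convex functions of $\delta$), or differentiating once or twice in $\delta$ and using sign information at the endpoints.
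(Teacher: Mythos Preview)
Your strategy is sound --- write $q_k-q_{k-1}$ as a forward difference of $x^{\delta}$ and read off a sign from the relevant derivative --- but two concrete errors derail it. For $k\ge 4$ the quantity $2s_{k-1}-s_k-s_{k-2}$ expands to $-(k+\tfrac12)^{\delta}+4(k-\tfrac12)^{\delta}-6(k-\tfrac32)^{\delta}+4(k-\tfrac52)^{\delta}-(k-\tfrac72)^{\delta}=-\Delta^4 g(k-\tfrac72)$ with $g(x)=x^\delta$; this is a \emph{fourth} difference over five nodes, not the third difference you first write down. More seriously, $g^{(4)}(x)=\delta(\delta-1)(\delta-2)(\delta-3)x^{\delta-4}$ is the product of \emph{one} positive factor and \emph{three} negative ones for $\delta\in(0,1)$, so $g^{(4)}<0$, not $>0$ as you assert. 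With the sign corrected one gets $\Delta^4 g<0$ and hence $q_k-q_{k-1}=-\Delta^4 g>0$, which is the desired direction --- so the argument does go through once the bookkeeping is straightened out, but as written it would deliver the wrong inequality.

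The paper sidesteps the fourth-derivative route by working one level higher: set $g(x)=(x+\tfrac12)^\delta-2(x-\tfrac12)^\delta+(x-\tfrac32)^\delta$ for $x\ge 2$, so that $g(k)=s_k$ and $q_{k+1}-q_k=2g(k)-g(k+1)-g(k-1)$ for $k\ge 3$. Then only $g''<0$ is needed, and this follows because $y\mapsto y^{\delta-2}$ is convex while $\delta(\delta-1)<0$. This second-derivative argument is shorter and avoids the sign traps you ran into. For the boundary step $q_0+q_2<q_3$, your suggestion that the relevant combination vanishes at both endpoints is incorrect: one computes $h(\delta):=q_3-(q_0+q_2)=-(7/2)^\delta+4(5/2)^\delta-6(3/2)^\delta+5(1/2)^\delta$, which has $h(0)=2$ and $h(1)=0$; moreover the coefficients $(-1,4,-6,5)$ are not binomial, so no clean higher-difference interpretation is available. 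The paper handles this by noting $h(1)=0$ and that $h$ is strictly decreasing on $(0,1)$ --- exactly the monotonicity-in-$\delta$ fallback you list last among your options.
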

\begin{proof}
Firstly, by direct calculation, we obtain
\begin{eqnarray}
h({\delta_i}):=q_3^{({\delta_i})}-(q_0^{({\delta_i})}+q_2^{({\delta_i})})&=&2s_2^{({\delta_i})}-s_3^{({\delta_i})}-s_1^{({\delta_i})}+s_0^{({\delta_i})}\\ \nonumber
&=&-\left(\frac{7}{2}\right)^{\delta_i}+4\left(\frac{5}{2}\right)^{\delta_i}-6\left(\frac{3}{2}\right)^{\delta_i}+5\left(\frac{1}{2}\right)^{\delta_i}.   \nonumber
\end{eqnarray}
As $h({\delta_i})$ strictly decreases with $\delta_i$ and $h(1)=0$, it follows that $h(\delta_i)>h(1)$, that is, $q_3^{({\delta_i})}>q_0^{({\delta_i})}+q_2^{({\delta_i})}$.

In addition, define $g(x)=\left(x+\frac{1}{2}\right)^{\delta_i}-2\left(x-\frac{1}{2}\right)^{\delta_i}+\left(x-\frac{3}{2}\right)^{\delta_i},~x\geq2$.
Since ${\delta_i}-1<0$ and $\phi(y)=y^{{\delta_i}-2}$ is convex for $y>0$, the second derivative of $g(x)$ satisfies
$g^{''}(x)={\delta_i}({\delta_i}-1)\left(\left(x+\frac{1}{2}\right)^{{\delta_i}-2}-2\left(x-\frac{1}{2}\right)^{{\delta_i}-2}+\left(x-\frac{3}{2}\right)^{{\delta_i}-2}\right)<0$, implying $g(x)$ is concave. Thus, for $k\geq3$,
\begin{eqnarray}
q_{k+1}^{({\delta_i})}-q_k^{({\delta_i})}&=&2s_{k}^{({\delta_i})}-s_{k+1}^{(\delta)}-s_{k-1}^{({\delta_i})}\\  \nonumber
&=&2g(k)-g(k+1)-g(k-1)\\  \nonumber
&>&0.   \nonumber
\end{eqnarray}
The proof is complete.
\end{proof}

\section{Preconditioners and convergence behaviors}\label{Sec-3}

In general, the iterative method converges slowly since the coefficient matrix will be ill-conditioned as the spatial grid mesh increases. Hence, some specific techniques, such as preconditioning, are necessary to improve the proposed fast iterative method by reducing the iteration number. Recently, sine transform-based preconditioners have been widely used to precondition both symmetric and non-symmetric Toeplitz systems derived from FD methods, as illustrated in \cite{hon2024symbol,huang2021spectral,huang2024optimal,huang2023tau,li2025multilevel,lin2023tau,qin2023sine,qu2025novel,zhang2022fast} and the references therein. Inspired by this novel
preconditioning idea, we aim at constructing the following preconditioner for solving the CN-FV scheme (\ref{linear_sys}): 
\begin{equation}\label{pre_definition}
\begin{small}
\begin{aligned}
\mathbf{P}&=\mathbf{A}_N +\eta_\alpha \mathbf{A}_{n_1^{+}} \otimes \tau(\mathcal{H}(\mathbf{B}_{n_1}^{\alpha}))+\eta_\beta \mathbf{A}_{n_3} \otimes \tau(\mathcal{H}(\mathbf{B}_{n_2}^{\beta})) \otimes \mathbf{A}_{n_1}+\eta_\gamma \tau(\mathcal{H}(\mathbf{B}_{n_3}^{\gamma})) \otimes \mathbf{A}_{n_3^{-}}
 \\  
&=\mathbf{A}_N +\eta_\alpha (k_{1,+}+k_{1,-}) \mathbf{A}_{n_1^{+}} \otimes \tau(\mathcal{H}(\mathbf{T}_{n_1}^{\alpha}))+\eta_\beta (k_{2,+}+k_{2,-}) \mathbf{A}_{n_3} \otimes \tau(\mathcal{H}(\mathbf{T}_{n_2}^{\beta})) \otimes \mathbf{A}_{n_1}\\ 
&~~~+\eta_\gamma (k_{3,+}+k_{3,-}) \tau(\mathcal{H}(\mathbf{T}_{n_3}^{\gamma})) \otimes \mathbf{A}_{n_3^{-}}   
\end{aligned}
\end{small}
\end{equation}
where $\tau(\mathcal{H}(\mathbf{T}_{n_i}^{\delta_i}))$ is the $\tau$ preconditioner \cite{bini1990new} for $\mathcal{H}(\mathbf{T}_{n_i}^{\delta_i})$. Note that $\tau(\mathcal{H}(\mathbf{T}_{n_i}^{\delta_i}))$ can be fast diagonalized by the sine transform matrix $\mathbf{S}_{n_i}$ with $[\mathbf{S}_{n_i}]_{j,k}=\sqrt{\frac{2}{n_i+1}}\sin\left(\frac{jk\pi}{n_i+1}\right)$ for $1\leq j,k\leq n_i$; and the eigenvalues of $\mathcal{H}(\mathbf{T}_{n_i}^{\delta_i})$ can be determined by the first column elements of $\mathcal{H}(\mathbf{T}_{n_i}^{\delta_i})$ \cite{bini1990new,huang2023tau}. Since the symmetric tridiagonal matrix $\mathbf{A}_{n_i}$ is also a $\tau$ matrix, the preconditioner $\mathbf{P}$ can be diagonalized by the 3-level sine transform matrix $\mathbf{S}_{N}$, i.e.,
\begin{equation}\label{diag_tau_pre}
\begin{aligned}
\mathbf{P}&=\mathbf{S}_{N}\Big(\mathbf{\Lambda}_{\mathbf{A}_N}+\eta_\alpha (k_{1,+}+k_{1,-})\mathbf{\Lambda}_{n_3} \otimes \mathbf{\Lambda}_{n_2} \otimes \mathbf{\Lambda}_1^{\alpha} + \eta_\beta (k_{2,+}+k_{2,-})\mathbf{\Lambda}_{n_3} \otimes \mathbf{\Lambda}_2^{\beta} \otimes \mathbf{\Lambda}_{n_1}  \\
&~~~~~+ \eta_\gamma (k_{3,+}+k_{3,-}) \mathbf{\Lambda}_3^{\gamma} \otimes \mathbf{\Lambda}_{n_2} \otimes \mathbf{\Lambda}_{n_1}\Big)\mathbf{S}_{N}\\
&=:\mathbf{S}_{N} \mathbf{\Lambda} \mathbf{S}_{N},
\end{aligned}
\end{equation}
where $\mathbf{S}_{N}=\underset{i=3}{\overset{1}{\otimes}}\mathbf{S}_{n_i}$; $\mathbf{\Lambda}_{\mathbf{A}_N}=\underset{i=3}{\overset{1}{\otimes}}\mathbf{\Lambda}_{n_i}$ and $\mathbf{\Lambda}=\mathbf{\Lambda}_{\mathbf{A}_N} + \eta_\alpha (k_{1,+}+k_{1,-})\mathbf{\Lambda}_{n_3} \otimes \mathbf{\Lambda}_{n_2} \otimes \mathbf{\Lambda}_1^{\alpha} + \eta_\beta (k_{2,+}+k_{2,-})\mathbf{\Lambda}_{n_3} \otimes \mathbf{\Lambda}_2^{\beta} \otimes \mathbf{\Lambda}_{n_1} + \eta_\gamma (k_{3,+}+k_{3,-}) \mathbf{\Lambda}_3^{\gamma} \otimes \mathbf{\Lambda}_{n_2} \otimes \mathbf{\Lambda}_{n_1}$ with diagonal matrices $\mathbf{\Lambda}_{n_i}$ and $\mathbf{\Lambda}_i^{\delta_i}$ storing the eigenvalues of the matrices $\mathbf{A}_{n_i}$ and $\tau(\mathcal{H}(\mathbf{T}_{n_i}^{\delta_i}))$, respectively.

Before proving the invertibility of the proposed preconditioner $\mathbf{P}$, we first give two necessary lemmas.

\begin{lemma}\label{tridiag_eig}
    The tridiagonal matrix $\mathbf{A}_{n_i}$ defined in \eqref{tridiag_matrix} is SPD with eigenvalues lying in the interval $(\frac{1}{2}, 1)$.
\end{lemma}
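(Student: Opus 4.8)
The plan is to use the standard formula for the eigenvalues of a symmetric tridiagonal Toeplitz matrix and then restrict to the range of the cosine. Since $\mathbf{A}_{n_i}=\tfrac18\,\mathrm{tridiag}(1,6,1)$ is a symmetric tridiagonal Toeplitz matrix, its eigenvalues are given explicitly by
\[
\lambda_j\bigl(\mathbf{A}_{n_i}\bigr)=\frac{1}{8}\left(6+2\cos\frac{j\pi}{n_i+1}\right)=\frac{1}{4}\left(3+\cos\frac{j\pi}{n_i+1}\right),\qquad j=1,2,\ldots,n_i,
\]
and the corresponding eigenvectors are the columns of the sine transform matrix $\mathbf{S}_{n_i}$ (this is exactly why $\mathbf{A}_{n_i}$ is a $\tau$ matrix, as noted before the lemma). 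First I would state this formula, either citing the classical result or sketching the one-line verification that $\mathbf{S}_{n_i}$ diagonalizes $\mathbf{A}_{n_i}$.

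Next I would bound the eigenvalues. For $1\le j\le n_i$ the argument $\tfrac{j\pi}{n_i+1}$ lies strictly inside $(0,\pi)$, so $\cos\tfrac{j\pi}{n_i+1}\in(-1,1)$; both endpoints are excluded because $j=0$ and $j=n_i+1$ are not attained. Substituting into the formula gives
\[
\lambda_j\bigl(\mathbf{A}_{n_i}\bigr)\in\left(\frac{1}{4}(3-1),\ \frac{1}{4}(3+1)\right)=\left(\frac{1}{2},\,1\right).
\]
In particular every eigenvalue is positive, and since $\mathbf{A}_{n_i}$ is real symmetric it is SPD, which establishes both claims at once.

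There is essentially no obstacle here; the only point requiring a word of care is the strictness of the open interval, which follows simply from the fact that the extreme indices $j=0$ and $j=n_i+1$ do not correspond to eigenvalues. I would keep the write-up to the displayed eigenvalue formula, the observation $0<\tfrac{j\pi}{n_i+1}<\pi$, and the resulting inclusion in $(\tfrac12,1)$, concluding with SPD-ness from symmetry plus positivity of the spectrum.
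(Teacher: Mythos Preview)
Your argument is correct. The paper, however, does not compute the eigenvalues explicitly; its one-line proof simply invokes the Ger\v{s}gorin disk theorem. For $\mathbf{A}_{n_i}=\tfrac18\,\mathrm{tridiag}(1,6,1)$ the Ger\v{s}gorin disks are centered at $3/4$ with radius at most $1/4$, giving eigenvalues in $[1/2,1]$; to obtain the \emph{open} interval stated in the lemma one would additionally appeal to irreducibility (Taussky's boundary refinement), a detail the paper leaves implicit. Your route via the closed-form eigenvalues $\lambda_j=\tfrac14\bigl(3+\cos\tfrac{j\pi}{n_i+1}\bigr)$ is slightly longer to state but delivers the strict bounds immediately and dovetails with the paper's later use of the fact that $\mathbf{A}_{n_i}$ is a $\tau$ matrix diagonalized by $\mathbf{S}_{n_i}$. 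Either approach is perfectly adequate for such a routine lemma.
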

\begin{proof}
The results can be obtained directly by using the Ger\v{s}gorin disk theorem \cite{bai2021matrix,axelsson1996iterative,varga2011gervsgorin}.
\end{proof}


\begin{lemma}\label{1D_tau_SPD}
    For $\delta_i \in (0,1)$,  the matrices $\mathcal{H}(\mathbf{T}_{n_i}^{\delta_i})$ and $\tau(\mathcal{H}(\mathbf{T}_{n_i}^{\delta_i}))$ are SPD.
\end{lemma}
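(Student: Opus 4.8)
The plan is to establish that $\mathcal{H}(\mathbf{T}_{n_i}^{\delta_i})$ is SPD first, and then deduce the SPD property of its $\tau$-approximation $\tau(\mathcal{H}(\mathbf{T}_{n_i}^{\delta_i}))$ from that. Since $\mathbf{T}_{n_i}^{\delta_i}$ is a (lower Hessenberg) Toeplitz matrix with symbol determined by the coefficients $\{q_k^{(\delta_i)}\}$, its symmetric part $\mathcal{H}(\mathbf{T}_{n_i}^{\delta_i})$ is a real symmetric Toeplitz matrix whose first column (and hence whose generating function) is built from $q_0^{(\delta_i)}+q_2^{(\delta_i)}$ in the $\pm 1$ positions, $2q_1^{(\delta_i)}$ on the diagonal, and $q_k^{(\delta_i)}$ ($k\ge 3$) in the remaining positions — all halved. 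I would write out this symmetric Toeplitz structure explicitly, so that the diagonal entry is $q_1^{(\delta_i)}$ and the $k$-th off-diagonal entry is $\tfrac12 q_{k+1}^{(\delta_i)}$ for $k\ge 1$ (with the first off-diagonal being $\tfrac12(q_0^{(\delta_i)}+q_2^{(\delta_i)})$).

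Next I would prove positive definiteness of $\mathcal{H}(\mathbf{T}_{n_i}^{\delta_i})$ using a diagonal-dominance-type argument powered by the sign and summation properties from Lemma \ref{coef_prop1} and the monotonicity from Lemma \ref{coef_prop2}. The key observations are: $q_1^{(\delta_i)}>0$ dominates the diagonal; all off-diagonal entries $\tfrac12 q_{k+1}^{(\delta_i)}$ for $k\ge 2$ are negative (by Lemma \ref{coef_prop1}(i)), and $\tfrac12(q_0^{(\delta_i)}+q_2^{(\delta_i)})$ is also negative; by Lemma \ref{coef_prop1}(ii) the full tail sums to zero and by Lemma \ref{coef_prop1}(iii) the truncated sum $q_1^{(\delta_i)}+\sum_{k=0,k\ne 1}^{s} q_k^{(\delta_i)}>0$ for every $s\ge 2$. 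Combining these, for any row the diagonal entry minus the sum of the absolute values of the off-diagonal entries in that row equals (up to the factor $1/2$ inherited from $\mathcal{H}$) exactly a truncated partial sum of the $q_k^{(\delta_i)}$, which is strictly positive. This gives strict (generalized) diagonal dominance with positive diagonal, hence $\mathcal{H}(\mathbf{T}_{n_i}^{\delta_i})\succ \mathbf{O}$. One should be slightly careful at the boundary rows, where fewer off-diagonal entries appear and the partial sums are even more favorable, so the estimate only improves there.

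Finally, for $\tau(\mathcal{H}(\mathbf{T}_{n_i}^{\delta_i}))$ I would invoke the standard fact about $\tau$-preconditioners: the $\tau$ operator applied to a symmetric Toeplitz matrix $\mathbf{H}$ with generating function $f$ returns the sine-transform matrix $\mathbf{S}_{n_i}$ conjugating a diagonal matrix whose entries are samples of $f$ (more precisely, the eigenvalues of $\tau(\mathbf{H})$ are $f$ evaluated at the grid points $\theta_j = j\pi/(n_i+1)$, equivalently a suitable real combination of the first-column entries). Since $\mathcal{H}(\mathbf{T}_{n_i}^{\delta_i})$ is SPD, its generating function satisfies $f\ge 0$, and in fact the established strict diagonal dominance shows $f>0$ away from the origin, with the discrete samples at $\theta_j>0$ bounded below by a positive quantity; more directly, since $\tau$ preserves the interval $[\lambda_{\min}(\mathbf{H}),\lambda_{\max}(\mathbf{H})]$ for symmetric Toeplitz $\mathbf{H}$ in the sense that eigenvalues of $\tau(\mathbf{H})$ lie in $[\min f, \max f]$ and $\min f \ge \lambda_{\min}(\mathbf{H})/C$ is not needed — it suffices that all eigenvalues of $\tau(\mathbf{H})$ are in the open interval $(\min f, \max f)$ with $f>0$. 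Hence $\tau(\mathcal{H}(\mathbf{T}_{n_i}^{\delta_i}))$ is SPD as well. The main obstacle I anticipate is making the diagonal-dominance bookkeeping for $\mathcal{H}(\mathbf{T}_{n_i}^{\delta_i})$ fully rigorous — carefully matching which partial sum of $\{q_k^{(\delta_i)}\}$ appears in each row and confirming it is covered by Lemma \ref{coef_prop1}(iii) — together with citing the precise statement that $\tau(\cdot)$ maps an SPD symmetric-Toeplitz matrix (with strictly positive generating function) to an SPD matrix, which I would take from \cite{bini1990new,huang2023tau}.
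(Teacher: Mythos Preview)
Your plan for $\mathcal{H}(\mathbf{T}_{n_i}^{\delta_i})$ via strict diagonal dominance is fine and matches what the paper's cited reference does. The real gap is in the second half, where you try to pass from ``$\mathcal{H}(\mathbf{T}_{n_i}^{\delta_i})$ is SPD'' to ``its generating function $f$ is nonnegative'' and then to ``$\tau(\mathcal{H}(\mathbf{T}_{n_i}^{\delta_i}))$ is SPD''. Neither implication holds in general. For a concrete counterexample, the $3\times 3$ symmetric Toeplitz matrix with first row $[1,\,0.75,\,0.4]$ is SPD (check leading minors), yet the corresponding truncated cosine polynomial $f(\theta)=1+1.5\cos\theta+0.8\cos 2\theta$ is negative near $\theta=\pi$, and the eigenvalue $\lambda_3=1-0.75\sqrt{2}$ of its $\tau$-matrix is negative. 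So you cannot cite a blanket result of the form ``$\tau$ maps SPD Toeplitz to SPD'' from \cite{bini1990new,huang2023tau}; no such result exists without an extra hypothesis on $f$.

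What does work --- and what the paper does --- is to bypass the SPD of $\mathcal{H}(\mathbf{T}_{n_i}^{\delta_i})$ entirely and bound the eigenvalues of $\tau(\mathcal{H}(\mathbf{T}_{n_i}^{\delta_i}))$ directly. With $t_0=q_1^{(\delta_i)}$, $2t_1=q_0^{(\delta_i)}+q_2^{(\delta_i)}$, $2t_j=q_{j+1}^{(\delta_i)}$ for $j\ge 2$, the $k$-th eigenvalue is $\lambda_k=t_0+2\sum_{j=1}^{n_i-1}t_j\cos(j\theta_k)$ with $\theta_k=k\pi/(n_i+1)$. Since every $t_j$ with $j\ge 1$ is negative (Lemma~\ref{coef_prop1}(i)) and $\cos(j\theta_k)\le 1$ with strict inequality for $j=1$, one gets $\lambda_k>t_0+2\sum_{j=1}^{n_i-1}t_j=\sum_{k=0}^{n_i}q_k^{(\delta_i)}>0$ by Lemma~\ref{coef_prop1}(iii) (or $>\sum_{k=0}^\infty q_k^{(\delta_i)}=0$ by (ii)). This is exactly the inequality $t_0>2\sum_{j\ge 1}|t_j|$ that you gesture at under the name ``diagonal dominance'', but note it is \emph{stronger} than row-wise diagonal dominance of the matrix (which only requires $t_0>\sum_{j=1}^{i-1}|t_j|+\sum_{j=1}^{n-i}|t_j|$); it is precisely what forces $f>0$ everywhere. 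So your salvageable route is: derive $t_0>2\sum_{j\ge 1}|t_j|$ straight from Lemma~\ref{coef_prop1}(iii), conclude $f>0$, and read off positivity of the $\tau$-eigenvalues --- without ever invoking SPD of $\mathcal{H}(\mathbf{T}_{n_i}^{\delta_i})$ as an intermediate step.
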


\begin{proof}
In fact, it has been established in \cite{fu2019stability} that $\mathcal{H}(\mathbf{T}_{n_i}^{\delta_i})$ is SPD. In the following, we show that matrix $\tau(\mathcal{H}(\mathbf{T}_{n_i}^{\delta_i}))$ is SPD as well. Let $\lambda_k$ be the $k$-th eigenvalue of $\tau(\mathcal{H}(\mathbf{T}_{n_i}^{\delta_i}))$ for $k=1,2,\ldots,n_i$, then $\lambda_k$ can be determined by using the first column elements of $\mathcal{H}(\mathbf{T}_{n_i}^{\delta_i})$, i.e.,
\begin{equation*}
\begin{aligned}
\lambda_k&= q_1^{(\delta_i)}+ \left(q_0^{(\delta_i)}+q_2^{(\delta_i)}\right)\cos(\frac{k\pi}{n_i+1})+\sum\limits_{j=2}^{{n_i}-1}q_{k+1}^{(\delta_i)}\cos(\frac{jk\pi}{n_i+1})\\
&>q_1^{(\delta_i)}+ \left(q_0^{(\delta_i)}+q_2^{(\delta_i)}\right)+\sum\limits_{j=2}^{{n_i}-1}q_{k+1}^{(\delta_i)}~~~~~~\text{(by~Lemma~\ref{coef_prop1} (i))}\\
&=\sum\limits_{j=0}^{{n_i}-1}q_{k}^{(\delta_i)}\\
&>\sum\limits_{j=0}^{\infty}q_{k}^{(\delta_i)}~~~~~~\text{(by~Lemma~\ref{coef_prop1} (i))}\\
&=0~~~~~~\text{(by~Lemma~\ref{coef_prop1} (ii))}.
\end{aligned}
\end{equation*}
The proof is complete.
\end{proof}

Based on Lemmas \ref{tridiag_eig} and \ref{1D_tau_SPD}, the invertibility of $\mathbf{P}$ can be illustrated in the following lemma. 
\begin{lemma} \label{spd_tau_pre}
	Let $\mathbf{P}$ be the matrix defined in (\ref{pre_definition}), then $\mathbf{P}$ is invertible, and $\lambda_{\min}(\mathbf{P})>\frac{1}{8}$.
\end{lemma}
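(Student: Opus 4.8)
The plan is to exploit the simultaneous diagonalization of $\mathbf{P}$ by the three‑level sine transform matrix $\mathbf{S}_N$ already recorded in (\ref{diag_tau_pre}). First I would note that $\mathbf{P}$ is real symmetric: every summand in (\ref{pre_definition}) is a Kronecker product of symmetric matrices, since each $\mathbf{A}_{n_i}$ is symmetric by construction and each $\tau(\mathcal{H}(\mathbf{T}_{n_i}^{\delta_i}))$ is symmetric, being a $\tau$ matrix associated with the symmetric matrix $\mathcal{H}(\mathbf{T}_{n_i}^{\delta_i})$. Because $\mathbf{S}_{n_i}$ is real, symmetric and orthogonal (hence so is $\mathbf{S}_N$), the factorization $\mathbf{P}=\mathbf{S}_N\mathbf{\Lambda}\mathbf{S}_N$ shows that the eigenvalues of $\mathbf{P}$ are precisely the diagonal entries of $\mathbf{\Lambda}$, so it suffices to bound those entries from below by $1/8$.

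Next I would read off a generic diagonal entry of $\mathbf{\Lambda}$ from (\ref{diag_tau_pre}). Writing $\mu^{(i)}$ for the relevant eigenvalue of $\mathbf{A}_{n_i}$ and $\nu^{(i)}$ for the relevant eigenvalue of $\tau(\mathcal{H}(\mathbf{T}_{n_i}^{\delta_i}))$, the entry indexed by a multi‑index $(k_3,k_2,k_1)$ has the form
\[
\mu^{(1)}\mu^{(2)}\mu^{(3)}
+\eta_\alpha(k_{1,+}+k_{1,-})\,\mu^{(3)}\mu^{(2)}\nu^{(1)}
+\eta_\beta(k_{2,+}+k_{2,-})\,\mu^{(3)}\nu^{(2)}\mu^{(1)}
+\eta_\gamma(k_{3,+}+k_{3,-})\,\nu^{(3)}\mu^{(2)}\mu^{(1)}.
\]
By Lemma \ref{tridiag_eig} each $\mu^{(i)}\in(\tfrac12,1)$, so the first term already satisfies $\mu^{(1)}\mu^{(2)}\mu^{(3)}>\tfrac18$. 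By Lemma \ref{1D_tau_SPD} each $\nu^{(i)}>0$, and since $\eta_{\delta_i}=\frac{\Delta t}{2\Gamma(\delta_i+1)h_i^{2-\delta_i}}>0$ and $k_{i,\pm}>0$, the three remaining summands are strictly positive; hence every diagonal entry of $\mathbf{\Lambda}$ exceeds $\tfrac18$.

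Combining these two steps gives $\lambda_{\min}(\mathbf{P})=\min_k[\mathbf{\Lambda}]_{kk}>\tfrac18>0$, so $\mathbf{P}$ is SPD and in particular invertible, which is the claim. The argument is essentially bookkeeping on the Kronecker structure of $\mathbf{\Lambda}$; the only point requiring care — and the one I would double‑check — is that all scalar factors entering $\mathbf{\Lambda}$ are genuinely positive, which is exactly what Lemmas \ref{tridiag_eig} and \ref{1D_tau_SPD} together with the positivity of $\eta_{\delta_i}$ and $k_{i,\pm}$ supply, and no estimate finer than the crude bound $\mu^{(i)}>\tfrac12$ is needed.
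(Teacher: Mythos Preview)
Your argument is correct and follows essentially the same route as the paper: both use the diagonalization (\ref{diag_tau_pre}), invoke Lemmas \ref{tridiag_eig} and \ref{1D_tau_SPD} to conclude that the $\mathbf{A}_N$ contribution alone already exceeds $(1/2)^3=1/8$ while the remaining summands are strictly positive. The only difference is presentational: the paper phrases this as the matrix inequality chain $\mathbf{\Lambda}\succ\mathbf{\Lambda}_{\mathbf{A}_N}\succ(1/2)^3\mathbf{I}_N$, whereas you spell out a generic diagonal entry.
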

\begin{proof}
From (\ref{diag_tau_pre}) and Lemmas \ref{tridiag_eig} and \ref{1D_tau_SPD}, it is easy to know that
\begin{equation*}
\begin{aligned}
\mathbf{\Lambda}&=\mathbf{\Lambda}_{\mathbf{A}_N} + \eta_\alpha (k_{1,+}+k_{1,-})\mathbf{\Lambda}_{n_3} \otimes \mathbf{\Lambda}_{n_2} \otimes \mathbf{\Lambda}_1^{\alpha} + \eta_\beta (k_{2,+}+k_{2,-})\mathbf{\Lambda}_{n_3} \otimes \mathbf{\Lambda}_2^{\beta} \otimes \mathbf{\Lambda}_{n_1} \\
&~~~~+ \eta_\gamma (k_{3,+}+k_{3,-}) \mathbf{\Lambda}_3^{\gamma} \otimes \mathbf{\Lambda}_{n_2} \otimes \mathbf{\Lambda}_{n_1}\\
&\succ \mathbf{\Lambda}_{\mathbf{A}_N}\\
&\succ \left(\frac{1}{2}\right)^3 \mathbf{I}_N.
\end{aligned}
\end{equation*}
The result follows.
\end{proof}

From (\ref{stiff_matrix}), we know that when $k_{i,+}=k_{i,-}\,(i=1,2,3)$, $\mathbf{B}_{n_i}^{\delta_i}$ defined in (\ref{stiff_matrix}) is SPD \cite{fu2019stability}, which combined with Lemma \ref{tridiag_eig} indicates that the linear systems (\ref{original_matrix_form}) are SPD. In this case, the preconditioned CG (PCG) method with the preconditioner $\mathbf{P}$ is used to solve the systems (\ref{original_matrix_form}). Otherwise, when $k_{1,+}\not=k_{1,-}$ or $k_{2,+}\not=k_{2,-}$ or $k_{3,+}\not=k_{3,-}$, the matrix $\mathbf{B}_{n_i}^{\delta_i}$ is non-symmetric, then the PGMRES method with the preconditioner $\mathbf{P}$ will be adopted to solve (\ref{original_matrix_form}). In what follows, the convergence behaviors of the two methods will be discussed in detail based on the following important inequality.

\begin{lemma}\cite{lin2018efficient}\label{ineq_lemma}
	For nonnegative numbers $\xi_i$ and positive numbers $\zeta_i$ $(1\leq i\leq m)$, it holds that
\begin{equation*}
\min\limits_{1\leq i\leq m}\frac{\xi_i}{\zeta_i}\leq\bigg(\sum\limits_{i=1}^{m}\zeta_i\bigg)^{-1}\bigg(\sum\limits_{i=1}^{m}\xi_i\bigg)\leq\max\limits_{1\leq i\leq m}\frac{\xi_i}{\zeta_i}.
	\end{equation*}
\end{lemma}

\subsection{Convergence of the CG method for symmetric systems}
It is well known that the convergence rate of the CG method can be described by the eigenvalue distribution of the coefficient matrix. The more clustered the eigenvalues
are, the faster the convergence rate will be. In particular, if the eigenvalues of the preconditioned matrix are bounded with mesh-size independent bounds, then the CG method converges linearly \cite{chan2007introduction,huang2021spectral}. 
In this subsection, we will show the effectiveness of the PCG method for (\ref{original_matrix_form}) in the symmetric case by showing the bounded eigenvalue distribution of the preconditioned matrix $\mathbf{P}^{-1/2}\mathbf{A}\mathbf{P}^{-1/2}$.

\begin{lemma}\cite{huang2021spectral}\label{monotony_coefficients}
Let $T_n=\left[t_{|i-j|}\right]$ be a symmetric Toeplitz matrix and $\tau(T_n)$ be the corresponding $\tau$ matrix. If the entries of $T_n$ are equipped with the following properties,
\begin{equation} \label{bounded_spectrum_lemma}
t_0>0, t_1<t_2<t_3<\cdots<t_{n-1}<0 \text { and } t_0+2 \sum_{i=1}^n t_i>0,
\end{equation}
or
\begin{equation}
t_0<0, t_1>t_2>t_3>\cdots>t_{n-1}>0 \text { and } t_0+2 \sum_{i=1}^n t_i<0,
\end{equation}
the eigenvalues of the matrix $\tau\left(T_n\right)^{-1} T_n$ satisfy
\begin{equation}
1 / 2<\lambda\left(\tau\left(T_n\right)^{-1} T_n\right)<3 / 2 .
\end{equation}
\end{lemma}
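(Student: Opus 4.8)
\textbf{Proof plan for Lemma \ref{monotony_coefficients}.}

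The plan is to diagonalize the $\tau$ matrix by the sine transform and reduce the eigenvalue bound to a Rayleigh quotient estimate on the original Toeplitz matrix. Since $\tau(T_n) = \mathbf{S}_n \boldsymbol{\Lambda} \mathbf{S}_n$ with $\mathbf{S}_n$ the (symmetric, orthogonal) sine transform matrix and $\boldsymbol{\Lambda} = \mathrm{diag}(\lambda_1,\dots,\lambda_n)$ the eigenvalues of $\tau(T_n)$, the eigenvalues of $\tau(T_n)^{-1}T_n$ coincide with those of the symmetric matrix $\boldsymbol{\Lambda}^{-1/2}\mathbf{S}_n T_n \mathbf{S}_n \boldsymbol{\Lambda}^{-1/2}$. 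Hence it suffices to show that for every nonzero $\mathbf{v}\in\mathbb{R}^n$,
\begin{equation*}
\frac{1}{2}\,\mathbf{v}^\top \boldsymbol{\Lambda}\mathbf{v} < \mathbf{v}^\top \mathbf{S}_n T_n \mathbf{S}_n \mathbf{v} < \frac{3}{2}\,\mathbf{v}^\top \boldsymbol{\Lambda}\mathbf{v},
\end{equation*}
equivalently $\tfrac12\tau(T_n) \prec T_n \prec \tfrac32\tau(T_n)$ in the Loewner order. Writing $R_n := T_n - \tau(T_n)$, this is the same as $-\tfrac12\tau(T_n) \prec R_n \prec \tfrac12\tau(T_n)$, so the crux is to control the ``correction'' matrix $R_n$ between $T_n$ and its $\tau$ approximation.

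First I would recall the explicit structure of $R_n = T_n - \tau(T_n)$: it is well known (see \cite{bini1990new,huang2021spectral}) that $R_n$ is a Hankel-plus-small-correction matrix whose entries are built from the symbol coefficients $t_k$; in fact $\tau(T_n)$ is obtained from $T_n$ by subtracting off a Hankel matrix supported near the two anti-diagonal corners, so $R_n$ has a simple, sparse-in-symbol representation. The key algebraic fact is that both $\tau(T_n)$ and $R_n$ can be written in the sine basis with explicit eigen-type expressions in terms of the $t_k$: the $j$-th eigenvalue of $\tau(T_n)$ is $\lambda_j = t_0 + 2\sum_{k=1}^{n-1} t_k \cos\frac{jk\pi}{n+1}$, and one has a comparable bilinear-form expression for $\langle \mathbf{s}_j, R_n \mathbf{s}_j\rangle$ along the sine eigenvectors $\mathbf{s}_j$. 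Under the monotonicity hypothesis \eqref{bounded_spectrum_lemma}, namely $t_0>0$, the $t_k$ strictly increasing and negative for $k\ge 1$, together with $t_0 + 2\sum_{k=1}^n t_k > 0$, I would use an Abel summation / summation-by-parts argument on $\sum t_k\cos\frac{jk\pi}{n+1}$ to bound the trigonometric sums, exactly the technique in \cite{huang2021spectral}, to get simultaneously $\lambda_j > 0$ (so $\tau(T_n)\succ 0$ and the split makes sense) and $|\langle \mathbf{s}_j, R_n \mathbf{s}_j\rangle| < \tfrac12 \lambda_j$ for every $j$; combined with a Cauchy–Schwarz/off-diagonal estimate this yields $-\tfrac12\tau(T_n)\prec R_n \prec \tfrac12\tau(T_n)$ and hence the claimed bound $1/2 < \lambda(\tau(T_n)^{-1}T_n) < 3/2$. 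The second hypothesis (with all inequalities reversed and $t_0<0$) follows by applying the first case to $-T_n$, since $\tau(-T_n) = -\tau(T_n)$ and the spectrum of $\tau(T_n)^{-1}T_n$ is unchanged.

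The main obstacle is the sharpness of the constants $1/2$ and $3/2$: a crude Gershgorin or $\|R_n\|_2 \le \|R_n\|_1$ bound will not deliver the factor exactly $1/2$, so the delicate part is the summation-by-parts estimate that exploits the \emph{monotonicity} of $\{t_k\}$ (not merely their signs) to pin down the ratio $\langle \mathbf{s}_j, R_n \mathbf{s}_j\rangle / \lambda_j$ on the correct side of $\pm 1/2$, uniformly in $j$ and $n$. Since this lemma is quoted verbatim from \cite{huang2021spectral}, I would either cite that reference for the technical trigonometric estimate or reproduce its Abel-summation computation; in the context of the present paper the lemma is applied with $T_n = \mathcal{H}(\mathbf{T}_{n_i}^{\delta_i})$, whose first-column entries $q_k^{(\delta_i)}$ satisfy precisely \eqref{bounded_spectrum_lemma} by Lemmas \ref{coef_prop1} and \ref{coef_prop2}, so no new estimate beyond \cite{huang2021spectral} is needed here.
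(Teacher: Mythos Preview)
The paper does not prove this lemma at all: it is stated with the citation \cite{huang2021spectral} and used as a black box (its only role is to feed into the proof of Theorem~\ref{CG_bounded_sprctrum} via inequality~\eqref{1D_bound}). So there is no ``paper's own proof'' to compare against; you correctly identify this yourself at the end of your proposal, and in the context of the present paper the right move is simply to cite \cite{huang2021spectral} and verify, as you note, that the hypotheses hold for $\mathcal{H}(\mathbf{T}_{n_i}^{\delta_i})$ via Lemmas~\ref{coef_prop1} and~\ref{coef_prop2}.

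As a side remark on the sketch itself: the reduction to $-\tfrac12\tau(T_n)\prec R_n\prec\tfrac12\tau(T_n)$ with $R_n=T_n-\tau(T_n)$ a Hankel correction is the right framework, and the argument in \cite{huang2021spectral} does hinge on the monotonicity of $\{t_k\}$ rather than a crude norm bound. However, the step you describe as ``$|\langle\mathbf{s}_j,R_n\mathbf{s}_j\rangle|<\tfrac12\lambda_j$ combined with a Cauchy--Schwarz/off-diagonal estimate'' is not how the comparison is actually closed: controlling only the diagonal of $\mathbf{S}_nR_n\mathbf{S}_n$ in the sine basis does not by itself give a two-sided operator inequality, and an off-diagonal Cauchy--Schwarz patch would generically lose the sharp constant $1/2$. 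The actual mechanism in \cite{huang2021spectral} compares $R_n$ (a Hankel matrix with entries drawn from $t_2,\dots,t_{n-1}$) entrywise against a second $\tau$-type matrix built from the differences $t_k-t_{k+1}$, and it is the strict monotonicity $t_1<t_2<\cdots<t_{n-1}<0$ that makes this entrywise domination go through with constant exactly $1/2$. If you ever need to reproduce the proof rather than cite it, that is the part to get right.
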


\begin{theorem}\label{CG_bounded_sprctrum}
Let $\mathbf{A}, \mathbf{P}$ be the matrices defined in \eqref{linear_sys} and \eqref{pre_definition}, respectively, and $k_{i,+}=k_{i,-}\,(i=1,2,3)$. Then the preconditioned matrix $\mathbf{P}^{-1/2}\mathbf{A} \mathbf{P}^{-1/2}$ is symmetric and the eigenvalues satisfy
\begin{equation}
1/2<\lambda\left(\mathbf{P}^{-1/2}\mathbf{A}\mathbf{P}^{-1/2}\right)<3/2 .
\end{equation}
\end{theorem}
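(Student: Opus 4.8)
The plan is to diagonalize both $\mathbf{A}$ and $\mathbf{P}$ simultaneously as far as possible, reduce the spectral bound to a sum of one-dimensional contributions, and then invoke Lemma~\ref{monotony_coefficients} together with Lemma~\ref{ineq_lemma}. First, since $k_{i,+}=k_{i,-}$, the stiffness matrix $\mathbf{B}_{n_i}^{\delta_i}=k_{i,+}(\mathbf{T}_{n_i}^{\delta_i}+(\mathbf{T}_{n_i}^{\delta_i})^\top)=2k_{i,+}\mathcal{H}(\mathbf{T}_{n_i}^{\delta_i})$ is already symmetric, so $\mathbf{A}$ is SPD (by Lemma~\ref{tridiag_eig} and Lemma~\ref{1D_tau_SPD}); hence $\mathbf{P}^{-1/2}\mathbf{A}\mathbf{P}^{-1/2}$ is symmetric and congruent to $\mathbf{P}^{-1}\mathbf{A}$, so it suffices to bound the eigenvalues of $\mathbf{P}^{-1}\mathbf{A}$, or equivalently to show $\tfrac12\mathbf{P}\prec\mathbf{A}\prec\tfrac32\mathbf{P}$. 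Writing $\mathbf{x}^\top\mathbf{A}\mathbf{x}$ and $\mathbf{x}^\top\mathbf{P}\mathbf{x}$ using \eqref{linear_sys} and \eqref{pre_definition}, both are of the form
\begin{equation*}
\mathbf{x}^\top\mathbf{A}_N\mathbf{x}+\eta_\alpha(k_{1,+}+k_{1,-})\,\mathbf{x}^\top\!\big(\mathbf{A}_{n_1^{+}}\otimes\mathbf{M}_1\big)\mathbf{x}+\eta_\beta(k_{2,+}+k_{2,-})\,\mathbf{x}^\top\!\big(\mathbf{A}_{n_3}\otimes\mathbf{M}_2\otimes\mathbf{A}_{n_1}\big)\mathbf{x}+\eta_\gamma(k_{3,+}+k_{3,-})\,\mathbf{x}^\top\!\big(\mathbf{M}_3\otimes\mathbf{A}_{n_3^{-}}\big)\mathbf{x},
\end{equation*}
where $\mathbf{M}_i=\mathcal{H}(\mathbf{T}_{n_i}^{\delta_i})$ for $\mathbf{A}$ and $\mathbf{M}_i=\tau(\mathcal{H}(\mathbf{T}_{n_i}^{\delta_i}))$ for $\mathbf{P}$, and all the Kronecker factors not equal to $\mathbf{M}_i$ are the SPD matrices $\mathbf{A}_{n_j}$.

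The key algebraic step is to verify that the one-dimensional entries $t_k:=[\mathcal{H}(\mathbf{T}_{n_i}^{\delta_i})]_{k}$ (the first-column entries of the symmetric Toeplitz matrix $\mathcal{H}(\mathbf{T}_{n_i}^{\delta_i})$) satisfy the monotonicity hypothesis \eqref{bounded_spectrum_lemma} of Lemma~\ref{monotony_coefficients}. Here $t_0=q_1^{(\delta_i)}>0$ by Lemma~\ref{coef_prop1}(i), $t_1=\tfrac12(q_0^{(\delta_i)}+q_2^{(\delta_i)})<0$, and $t_k=\tfrac12 q_{k+1}^{(\delta_i)}<0$ for $k\ge2$; the strict ordering $t_1<t_2<t_3<\cdots<0$ is exactly Lemma~\ref{coef_prop2} (dividing by $2$), while the row-sum positivity $t_0+2\sum_{k\ge1}t_k>0$ follows from $\sum_{k=0}^{\infty}q_k^{(\delta_i)}=0$ (Lemma~\ref{coef_prop1}(ii)) combined with Lemma~\ref{coef_prop1}(iii). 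Lemma~\ref{monotony_coefficients} then gives $\tfrac12\,\tau(\mathcal{H}(\mathbf{T}_{n_i}^{\delta_i}))\prec\mathcal{H}(\mathbf{T}_{n_i}^{\delta_i})\prec\tfrac32\,\tau(\mathcal{H}(\mathbf{T}_{n_i}^{\delta_i}))$ for each $i=1,2,3$.

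To pass from these one-dimensional bounds to the three-dimensional matrices, I would use the fact that tensoring with an SPD matrix preserves the Löwner order: if $\mathbf{O}\prec\mathbf{C}_1\preceq\mathbf{C}_2$ and $\mathbf{O}\preceq\mathbf{D}$, then $\mathbf{D}\otimes\mathbf{C}_1\preceq\mathbf{D}\otimes\mathbf{C}_2$. Applying this with $\mathbf{D}$ the relevant tensor product of $\mathbf{A}_{n_j}$'s and $\mathbf{C}_j$ running over $\mathcal{H}(\mathbf{T}_{n_i}^{\delta_i})$ versus its $\tau$-approximant, each of the three fractional terms in $\mathbf{x}^\top\mathbf{A}\mathbf{x}$ is squeezed between $\tfrac12$ and $\tfrac32$ times the corresponding term in $\mathbf{x}^\top\mathbf{P}\mathbf{x}$, while the leading term $\mathbf{x}^\top\mathbf{A}_N\mathbf{x}$ is identical in both. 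Finally, setting $\xi_i,\zeta_i$ to be the four nonnegative summands of $\mathbf{x}^\top\mathbf{A}\mathbf{x}$ and $\mathbf{x}^\top\mathbf{P}\mathbf{x}$ respectively (the first pair being equal, the other three pairs having ratios strictly between $1/2$ and $3/2$), Lemma~\ref{ineq_lemma} yields $1/2<\mathbf{x}^\top\mathbf{A}\mathbf{x}/\mathbf{x}^\top\mathbf{P}\mathbf{x}<3/2$ for every nonzero $\mathbf{x}$, which is the claimed eigenvalue bound (strictness of the outer inequalities is inherited because the ratio is a convex combination in which at least the three fractional components are strictly inside $(1/2,3/2)$, and $\mathbf{P}\succ\mathbf{O}$ by Lemma~\ref{spd_tau_pre}).

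The main obstacle I anticipate is not the tensor-product bookkeeping but the careful verification that the symmetrized coefficient sequence $t_k$ meets \emph{all} three conditions in \eqref{bounded_spectrum_lemma} simultaneously — in particular matching the index shift (the stiffness Toeplitz matrix $\mathbf{T}_{n_i}^{\delta_i}$ has subdiagonal entries $q_2,q_3,\dots$, so $\mathcal{H}$ shifts them) so that Lemma~\ref{coef_prop2} lines up precisely with the required strict monotonicity, and checking that the row-sum condition survives the truncation at finite $n_i$ rather than the infinite sum. Once that one-dimensional lemma-matching is done, the rest is a direct congruence-plus-tensor argument.
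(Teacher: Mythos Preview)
Your proposal is correct and follows essentially the same route as the paper: verify via Lemmas~\ref{coef_prop1} and~\ref{coef_prop2} that the symmetrized coefficient sequence of $\mathcal{H}(\mathbf{T}_{n_i}^{\delta_i})$ meets the hypotheses of Lemma~\ref{monotony_coefficients}, obtain the one-dimensional bound $\tfrac12\,\tau(\mathcal{H}(\mathbf{T}_{n_i}^{\delta_i}))\prec\mathcal{H}(\mathbf{T}_{n_i}^{\delta_i})\prec\tfrac32\,\tau(\mathcal{H}(\mathbf{T}_{n_i}^{\delta_i}))$, lift it through the Kronecker products with the SPD factors $\mathbf{A}_{n_j}$, and finish with Lemma~\ref{ineq_lemma}. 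The only cosmetic difference is that where you invoke L\"owner-order monotonicity of $\mathbf{D}\otimes(\,\cdot\,)$ for SPD $\mathbf{D}$, the paper spells this out concretely by writing $\mathbf{A}_{n_j}=\mathbf{L}_{n_j}\mathbf{L}_{n_j}^\top$ and substituting $\mathbf{z}=(\mathbf{L}_{n_3}^\top\otimes\mathbf{L}_{n_2}^\top\otimes\mathbf{I}_{n_1})\mathbf{y}$ in the Rayleigh quotient; these are the same argument. Your anticipated ``obstacle'' about the finite row-sum is already resolved by Lemma~\ref{coef_prop1}(iii), which is stated for finite partial sums, so no additional work is needed there.
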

\begin{proof}
When $k_{i,+}=k_{i,-}\,(i=1,2,3)$, from (\ref{stiff_matrix}), we note that $\mathbf{B}_{n_i}^{\delta_i}$ is a symmetric Toeplitz matrix, i.e.,
$$
\mathbf{B}_{n_i}^{\delta_i}=k_{i,+} \mathbf{T}^{\delta_i}_{n_i}+k_{i,-}\left(\mathbf{T}^{\delta_i}_{n_i}\right)^{\top}=(k_{i,+}+k_{i,-})\mathcal{H}(\mathbf{T}^{\delta_i}_{n_i}),
$$ 
where 
the first column of $\mathcal{H}(\mathbf{T}^{\delta_i}_{n_i})$ is $[q_1^{(\delta_i)},\frac{q_0^{(\delta_i)}+q_2^{(\delta_i)}}{2},\frac{q_3^{(\delta_i)}}{2},\ldots,\frac{q_{n_i}^{(\delta_i)}}{2}]^{T}$. Combining Lemmas \ref{coef_prop1}, \ref{coef_prop2} and \ref{monotony_coefficients}, one can immediately conclude that 
\begin{equation} \label{1D_bound}
1/2<\lambda\left(\tau\left(\mathcal{H}(\mathbf{T}^{\delta_i}_{n_i})\right)^{-1} \mathcal{H}(\mathbf{T}^{\delta_i}_{n_i})\right)<3/2.
\end{equation}
Note that for any nonzero vector $\mathbf{y} \in \mathbb{R}^{N\times 1}$,
\begin{eqnarray*}
\frac{\mathbf{y}^T \left(\mathbf{A}_{n_1^{+}} \otimes \mathbf{B}_{n_1}^{\alpha}\right) \mathbf{y}}{\mathbf{y}^T \left(\mathbf{A}_{n_1^{+}} \otimes \tau(\mathcal{H}(\mathbf{B}_{n_1}^{\alpha}))\right) \mathbf{y}} &=&\frac{\mathbf{y}^T \left(\mathbf{A}_{n_3}{\otimes}\mathbf{A}_{n_2} \otimes \mathbf{B}_{n_1}^{\alpha}\right) \mathbf{y}}{\mathbf{y}^T \left(\mathbf{A}_{n_3}{\otimes}\mathbf{A}_{n_2} \otimes \tau(\mathcal{H}(\mathbf{B}_{n_1}^{\alpha}))\right) \mathbf{y}}\\
&=&\frac{\mathbf{y}^T \left(\mathbf{L}_{n_3}\mathbf{L}_{n_3}^T{\otimes}\mathbf{L}_{n_2}\mathbf{L}_{n_2}^T \otimes (k_{1,+}+k_{1,-})\mathcal{H}(\mathbf{T}^{\alpha}_{n_1})\right) \mathbf{y}}{\mathbf{y}^T \left(\mathbf{L}_{n_3}\mathbf{L}_{n_3}^T{\otimes}\mathbf{L}_{n_2}\mathbf{L}_{n_2}^T \otimes (k_{1,+}+k_{1,-}) \tau(\mathcal{H}(\mathbf{T}_{n_1}^{\alpha}))\right) \mathbf{y}},\\
&=&\frac{\mathbf{z}^T \left(\mathbf{I}_{n_3}{\otimes}\mathbf{I}_{n_2} \otimes \mathcal{H}(\mathbf{T}^{\alpha}_{n_1})\right) \mathbf{z}}{\mathbf{z}^T \left(\mathbf{I}_{n_3}{\otimes}\mathbf{I}_{n_2} \otimes \tau(\mathcal{H}(\mathbf{T}^{\alpha}_{n_1}))\right) \mathbf{z}},~~~~\mathbf{z}= \left(\mathbf{L}_{n_3}^T \otimes \mathbf{L}_{n_2}^T \otimes \mathbf{I}_{n_1}\right)\mathbf{y},
\end{eqnarray*}
where $\mathbf{A}_{n_i}=\mathbf{L}_{n_i}\mathbf{L}_{n_i}^T$ for $i=2, 3$ is the Cholesky factorization of the SPD matrix $\mathbf{A}_{n_i}$.
By (\ref{1D_bound}), the Rayleigh quotient theorem \cite{chen2005matrix} and properties of the Kronecker product, we have 
\begin{equation*}
\begin{footnotesize}
\begin{aligned}
    \frac{1}{2}<\lambda_{\min}\left(\tau\left(\mathcal{H}(\mathbf{T}^{\alpha}_{n_1})\right)^{-1} \mathcal{H}(\mathbf{T}^{\alpha}_{n_1})\right)\leq\frac{\mathbf{z}^T \left(\mathbf{I}_{n_3}{\otimes}\mathbf{I}_{n_2}\otimes \mathcal{H}(\mathbf{T}^{\alpha}_{n_1})\right)\mathbf{z}}{\mathbf{z}^T \left(\mathbf{I}_{n_3}{\otimes}\mathbf{I}_{n_2} \otimes \tau(\mathcal{H}(\mathbf{T}^{\alpha}_{n_1}))\right) \mathbf{z}}\leq\lambda_{\max}\left(\tau\left(\mathcal{H}(\mathbf{T}^{\alpha}_{n_1})\right)^{-1} \mathcal{H}(\mathbf{T}^{\alpha}_{n_1})\right)<\frac{3}{2},
\end{aligned}
\end{footnotesize}
\end{equation*}
i.e.,
\begin{equation}\label{Rayleigh_quotient_1}
    \frac{1}{2}<\frac{\mathbf{y}^T \left(\mathbf{A}_{n_1^{+}} \otimes \mathbf{B}_{n_1}^{\alpha}\right) \mathbf{y}}{\mathbf{y}^T \left(\mathbf{A}_{n_1^{+}} \otimes \tau(\mathcal{H}(\mathbf{B}_{n_1}^{\alpha}))\right) \mathbf{y}}< \frac{3}{2}.
\end{equation}
Similarly, it holds that
\begin{equation}\label{Rayleigh_quotient_2}
    \frac{1}{2}<\frac{\mathbf{y}^T \left(\mathbf{A}_{n_3} \otimes \mathbf{B}_{n_2}^{\beta} \otimes \mathbf{A}_{n_1}\right) \mathbf{y}}{\mathbf{y}^T \left(\mathbf{A}_{n_3} \otimes \tau(\mathcal{H}(\mathbf{B}_{n_2}^{\beta})) \otimes \mathbf{A}_{n_1}\right) \mathbf{y}}< \frac{3}{2},
\end{equation}
and
\begin{equation}\label{Rayleigh_quotient_3}
    \frac{1}{2}<\frac{\mathbf{y}^T \left(\mathbf{B}_{n_3}^{\gamma} \otimes \mathbf{A}_{n_3^{-}}\right) \mathbf{y}}{\mathbf{y}^T \left(\tau(\mathcal{H}(\mathbf{B}_{n_3}^{\gamma}) \otimes \mathbf{A}_{n_3^{-}}\right) \mathbf{y}}< \frac{3}{2}.
\end{equation}
Based on (\ref{Rayleigh_quotient_1})-(\ref{Rayleigh_quotient_3}), we have
\begin{equation}\label{}
\begin{small}
\begin{aligned}
&\frac{\mathbf{y}^T \left(\mathbf{A}_N +\frac{1}{2}\eta_\alpha \mathbf{A}_{n_1^{+}} \otimes \tau(\mathcal{H}(\mathbf{B}_{n_1}^{\alpha}))+\frac{1}{2}\eta_\beta \mathbf{A}_{n_3} \otimes \tau(\mathcal{H}(\mathbf{B}_{n_2}^{\beta})) \otimes \mathbf{A}_{n_1}+\frac{1}{2}\eta_\gamma \tau(\mathcal{H}(\mathbf{B}_{n_3}^{\gamma})) \otimes \mathbf{A}_{n_3^{-}}\right) \mathbf{y}}{\mathbf{y}^T \left(\mathbf{A}_N +\eta_\alpha \mathbf{A}_{n_1^{+}} \otimes \tau(\mathcal{H}(\mathbf{B}_{n_1}^{\alpha}))+\eta_\beta \mathbf{A}_{n_3} \otimes \tau(\mathcal{H}(\mathbf{B}_{n_2}^{\beta})) \otimes \mathbf{A}_{n_1}+\eta_\gamma \tau(\mathcal{H}(\mathbf{B}_{n_3}^{\gamma})) \otimes \mathbf{A}_{n_3^{-}}\right) \mathbf{y}}\\
&<\frac{\mathbf{y}^T \left(\mathbf{A}_N +\eta_\alpha \mathbf{A}_{n_1^{+}} \otimes \mathbf{B}_{n_1}^{\alpha}+\eta_\beta \mathbf{A}_{n_3} \otimes \mathbf{B}_{n_2}^{\beta} \otimes \mathbf{A}_{n_1}+\eta_\gamma \mathbf{B}_{n_3}^{\gamma} \otimes \mathbf{A}_{n_3^{-}}\right) \mathbf{y}}{\mathbf{y}^T \left(\mathbf{A}_N +\eta_\alpha \mathbf{A}_{n_1^{+}} \otimes \tau(\mathcal{H}(\mathbf{B}_{n_1}^{\alpha}))+\eta_\beta \mathbf{A}_{n_3} \otimes \tau(\mathcal{H}(\mathbf{B}_{n_2}^{\beta})) \otimes \mathbf{A}_{n_1}+\eta_\gamma \tau(\mathcal{H}(\mathbf{B}_{n_3}^{\gamma})) \otimes \mathbf{A}_{n_3^{-}}\right) \mathbf{y}}\\
&<\frac{\mathbf{y}^T \left(\mathbf{A}_N +\frac{3}{2}\eta_\alpha \mathbf{A}_{n_1^{+}} \otimes \tau(\mathcal{H}(\mathbf{B}_{n_1}^{\alpha}))+\frac{3}{2}\eta_\beta \mathbf{A}_{n_3} \otimes \tau(\mathcal{H}(\mathbf{B}_{n_2}^{\beta})) \otimes \mathbf{A}_{n_1}+\frac{3}{2}\eta_\gamma \tau(\mathcal{H}(\mathbf{B}_{n_3}^{\gamma})) \otimes \mathbf{A}_{n_3^{-}}\right) \mathbf{y}}{\mathbf{y}^T \left(\mathbf{A}_N +\eta_\alpha  \mathbf{A}_{n_1^{+}} \otimes \tau(\mathcal{H}(\mathbf{B}_{n_1}^{\alpha}))+\eta_\beta \mathbf{A}_{n_3} \otimes \tau(\mathcal{H}(\mathbf{B}_{n_2}^{\beta})) \otimes \mathbf{A}_{n_1}+\eta_\gamma \tau(\mathcal{H}(\mathbf{B}_{n_3}^{\gamma})) \otimes \mathbf{A}_{n_3^{-}}\right) \mathbf{y}},
\end{aligned}
\end{small}
\end{equation}
which combining with Lemma \ref{ineq_lemma} implies
\begin{eqnarray*}
\lambda_{\min} (\mathbf{P}^{-1/2}\mathbf{A}\mathbf{P}^{-1/2})=\min\limits_{\mathbf{y}}\frac{\mathbf{y}^T \mathbf{A} \mathbf{y}}{\mathbf{y}^T \mathbf{P} \mathbf{y}}=\min\{1,\frac{1}{2}\}&=& \frac{1}{2},\\
\lambda_{\max} (\mathbf{P}^{-1/2}\mathbf{A}\mathbf{P}^{-1/2})=\max\limits_{\mathbf{y}}\frac{\mathbf{y}^T \mathbf{A} \mathbf{y}}{\mathbf{y}^T \mathbf{P} \mathbf{y}}=\max\{1,\frac{3}{2}\}&=& \frac{3}{2}.
\end{eqnarray*}
The proof is complete.
\end{proof}

Theorem \ref{CG_bounded_sprctrum} illustrates that the eigenvalues of the preconditioned matrix are uniformly bounded by two positive constants independent of matrix size, implying the PCG method with the proposed preconditioner $\mathbf{P}$ for solving \eqref{linear_sys} converges linearly.

\subsection{Convergence of the GMRES method for non-symmetric systems}
When $k_{1,+}\not=k_{1,-}$ or $k_{2,+}\not=k_{2,-}$ or $k_{3,+}\not=k_{3,-}$, systems (\ref{original_matrix_form}) are non-symmetric. Therefore, we use the GMRES method to solve the following one-sided preconditioned systems
\begin{equation}\label{one_side_system}
\mathbf{P}^{-1}\mathbf{A} \mathbf{u}^{m}=\mathbf{P}^{-1} \mathbf{b}^{m-1}, \quad 1 \leq m \leq M.
\end{equation}
Unlike the CG method, the behavior of the GMRES cannot be determined from eigenvalues alone \cite{greenbaum1997iterative}. However, residual-norm estimates provide practical ways to characterize its convergence; see, e.g., \cite{lin2023tau,lin2021all,liu2020fast,qin2023sine}.

In order to show the efficiency of the proposed preconditioner $\mathbf{P}$, we first introduce the following auxiliary two-sided preconditioned systems
\begin{align}\label{two_side_system}
& \mathbf{P}^{-\frac{1}{2}}  \mathbf{A} \underbrace{\mathbf{P}^{-\frac{1}{2}} \mathbf{\hat{u}}^{m}}_{=:\mathbf{u}^{m}}=\mathbf{P}^{-\frac{1}{2}} \mathbf{b}^{m-1}, \quad 1 \leq m \leq M.
\end{align}

\begin{lemma}\cite[Proposition 7.3]{elman2014finite}\label{lemma:gmres}
    Let $\mathbf{Z}\mathbf{v}=\mathbf{w}$ be a real square linear system with $\mathcal{H}(\mathbf{Z}) \succ \mathcal{O}$. Then, the residuals of the iterates generated by applying (restarted or non-restarted) GMRES method with an arbitrary initial guess to solve $\mathbf{Z}\mathbf{v}=\mathbf{w}$ satisfy
    \[
        \| \mathbf{r}_k \|_2 \leq \left( 1 - \frac{\lambda_{\min}( \mathcal{H}(\mathbf{Z})  )^2 }{ \lambda_{\min}( \mathcal{H}(\mathbf{Z})  ) \lambda_{\max}( \mathcal{H}(\mathbf{Z})  ) + \rho( \mathcal{S}(\mathbf{Z})   )^2   }   \right)^{k/2} \| \mathbf{r}_0 \|_2,
    \]
    where $\mathbf{r}_k = \mathbf{w} - \mathbf{Z}\mathbf{v}_k$ is the residual vector at the $k$-th GMRES iteration with $\mathbf{v}_k$ ($k \geq 1$) being the corresponding iterative solution.
\end{lemma}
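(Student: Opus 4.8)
The plan is to combine the minimal-residual (polynomial) characterisation of GMRES with a single, carefully tuned Richardson shift, so that the entire analysis reduces to estimating the $2$-norm of $\mathbf{I}-\alpha\mathbf{Z}$. Write $\mathcal{H}:=\mathcal{H}(\mathbf{Z})$, $\mathcal{S}:=\mathcal{S}(\mathbf{Z})$, $\mu_-:=\lambda_{\min}(\mathcal{H})>0$, $\mu_+:=\lambda_{\max}(\mathcal{H})$, $\rho_S:=\rho(\mathcal{S})$, and put $c^2:=1-\mu_-^2/(\mu_-\mu_++\rho_S^2)$. Since $(1-\alpha t)^k$ is a polynomial of degree $k$ with value $1$ at $t=0$, the optimality of (full) GMRES gives $\|\mathbf{r}_k\|_2\le\|(\mathbf{I}-\alpha\mathbf{Z})^k\mathbf{r}_0\|_2\le\|\mathbf{I}-\alpha\mathbf{Z}\|_2^{\,k}\|\mathbf{r}_0\|_2$ for every $\alpha\in\mathbb{R}$; for the restarted variant the same bound follows by applying this cycle by cycle (after $j$ steps of a length-$m$ cycle GMRES$(m)$ minimises over polynomials of degree $j\le m$, so the per-cycle contraction is $\|\mathbf{I}-\alpha\mathbf{Z}\|_2^{\,j}$). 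Hence it suffices to exhibit one $\alpha>0$ with $\|\mathbf{I}-\alpha\mathbf{Z}\|_2^2\le c^2$.

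Using $\|\mathbf{I}-\alpha\mathbf{Z}\|_2^2=\max_{\|\mathbf{x}\|_2=1}\bigl(1-2\alpha\,\mathbf{x}^\top\mathcal{H}\mathbf{x}+\alpha^2\|\mathbf{Z}\mathbf{x}\|_2^2\bigr)$, I would first bound $\|\mathbf{Z}\mathbf{x}\|_2^2$ for a unit vector $\mathbf{x}$ in terms of $a:=\mathbf{x}^\top\mathcal{H}\mathbf{x}\in[\mu_-,\mu_+]$. Both $\mathcal{H}\mathbf{x}-a\mathbf{x}$ and $\mathcal{S}\mathbf{x}$ are orthogonal to $\mathbf{x}$ (the latter since $\mathcal{S}^\top=-\mathcal{S}$), so $\|\mathbf{Z}\mathbf{x}\|_2^2=a^2+\|(\mathcal{H}\mathbf{x}-a\mathbf{x})+\mathcal{S}\mathbf{x}\|_2^2\le a^2+\bigl(\|\mathcal{H}\mathbf{x}-a\mathbf{x}\|_2+\|\mathcal{S}\mathbf{x}\|_2\bigr)^2$. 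The key inputs are the sharp operator inequality $\mathcal{H}^2\preceq(\mu_-+\mu_+)\mathcal{H}-\mu_-\mu_+\mathbf{I}$ (each eigenvalue $\mu$ of $\mathcal{H}$ obeys $(\mu-\mu_-)(\mu-\mu_+)\le0$), which yields $\|\mathcal{H}\mathbf{x}-a\mathbf{x}\|_2^2=\|\mathcal{H}\mathbf{x}\|_2^2-a^2\le(a-\mu_-)(\mu_+-a)$, and $\|\mathcal{S}\mathbf{x}\|_2^2=-\mathbf{x}^\top\mathcal{S}^2\mathbf{x}\le\rho_S^2$; together they give $\|\mathbf{Z}\mathbf{x}\|_2^2\le(\mu_-+\mu_+)a-\mu_-\mu_++\rho_S^2+2\rho_S\sqrt{(a-\mu_-)(\mu_+-a)}$.

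Finally I would take $\alpha:=\mu_-/(\mu_-\mu_++\rho_S^2)$ (so $\alpha\mu_+\le1$) and verify $1-2\alpha a+\alpha^2\|\mathbf{Z}\mathbf{x}\|_2^2\le 1-\alpha\mu_-=c^2$ for all $a\in[\mu_-,\mu_+]$. Substituting the bound above and using $\alpha(\mu_-\mu_++\rho_S^2)=\mu_-$, this reduces to $2\alpha^2\rho_S\sqrt{(a-\mu_-)(\mu_+-a)}\le\alpha(2-\alpha\mu_+)(a-\mu_-)+\alpha^2\mu_-(\mu_+-a)$; by AM--GM the right-hand side is at least $2\alpha\sqrt{\alpha\mu_-(2-\alpha\mu_+)}\sqrt{(a-\mu_-)(\mu_+-a)}$, so it suffices that $\alpha\rho_S^2\le\mu_-(2-\alpha\mu_+)$, i.e. $\alpha(\mu_-\mu_++\rho_S^2)\le2\mu_-$, i.e. $\mu_-\le2\mu_-$, which is trivial. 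Chaining the three steps gives $\|\mathbf{r}_k\|_2\le c^k\|\mathbf{r}_0\|_2$, as claimed. The \emph{main obstacle} is exactly the cross term $2\rho_S\sqrt{(a-\mu_-)(\mu_+-a)}$ coupling $\mathcal{H}(\mathbf{Z})$ and $\mathcal{S}(\mathbf{Z})$: the naive estimate $\|\mathcal{H}\mathbf{x}\|_2^2\le\mu_+^2$ (or $\le\mu_+a$) discards the decisive gap between $\mu_-\mu_+$ and $\mu_+^2$ and makes the inequality fail for $a$ near $\mu_-$, so one genuinely needs the two-sided quadratic bound on $\mathcal{H}^2$ together with the specific value of $\alpha$ to close the AM--GM step.
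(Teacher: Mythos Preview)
The paper does not supply its own proof of this lemma; it is quoted verbatim from the cited reference (Proposition~7.3 of Elman, Silvester and Wathen) and used as a black box in the subsequent convergence arguments. There is thus nothing in the paper to compare your proposal against.

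That said, your derivation is correct and is essentially the classical route to this Elman-type estimate. GMRES optimality reduces the task to bounding $\|\mathbf{I}-\alpha\mathbf{Z}\|_2$ for a single Richardson shift; the Kantorovich inequality $\mathcal{H}^2\preceq(\mu_-+\mu_+)\mathcal{H}-\mu_-\mu_+\mathbf{I}$ is precisely the sharp tool needed to control $\|\mathcal{H}\mathbf{x}-a\mathbf{x}\|^2\le(a-\mu_-)(\mu_+-a)$, and the choice $\alpha=\mu_-/(\mu_-\mu_++\rho_S^2)$ makes the final AM--GM step collapse to the triviality $\mu_-\le2\mu_-$. I have checked the algebra, in particular the reorganisation of the inequality into $\alpha(2-\alpha\mu_+)(a-\mu_-)+\alpha^2\mu_-(\mu_+-a)$ on the right, and it is clean. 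The handling of the restarted variant via cycle-by-cycle contraction is also correct. This is, modulo presentation, the argument one finds in the cited source and its antecedents.
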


In what follows, we will evaluate the lower and upper bounds of the eigenvalues of $\mathcal{H}(\mathbf{P}^{-\frac{1}{2}}  \mathbf{A} \mathbf{P}^{-\frac{1}{2}})$ and the upper bound of the spectral radius of $\mathcal{S}(\mathbf{P}^{-\frac{1}{2}}  \mathbf{A} \mathbf{P}^{-\frac{1}{2}})$ one by one.

\begin{lemma}\label{prop:eigen_S}
Let $\mathbf{A}, \mathbf{P}$ be the matrices defined in \eqref{linear_sys} and \eqref{pre_definition}, respectively. Then, the eigenvalues of $\mathbf{P}^{-1/2} \mathcal{H}(\mathbf{A}) \mathbf{P}^{-1/2}$ lie in $(\frac{1}{2},\frac{3}{2})$.
\end{lemma}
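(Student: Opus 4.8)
The plan is to reduce the statement about $\mathbf{P}^{-1/2}\mathcal{H}(\mathbf{A})\mathbf{P}^{-1/2}$ to the one-dimensional bounds already contained in \eqref{1D_bound}, exactly mirroring the argument of Theorem \ref{CG_bounded_sprctrum} but now with the \emph{non-symmetric} stiffness matrices $\mathbf{B}_{n_i}^{\delta_i}$ replaced by their symmetric parts. First I would observe that, since $\mathbf{A}_N$ and all the $\mathbf{A}_{n_i}$ are symmetric, taking the symmetric part commutes with the Kronecker-product structure of $\mathbf{A}$ in \eqref{linear_sys}, so
\[
\mathcal{H}(\mathbf{A})=\mathbf{A}_N+\eta_\alpha\mathbf{A}_{n_1^{+}}\otimes\mathcal{H}(\mathbf{B}_{n_1}^{\alpha})+\eta_\beta\mathbf{A}_{n_3}\otimes\mathcal{H}(\mathbf{B}_{n_2}^{\beta})\otimes\mathbf{A}_{n_1}+\eta_\gamma\mathcal{H}(\mathbf{B}_{n_3}^{\gamma})\otimes\mathbf{A}_{n_3^{-}},
\]
and from \eqref{stiff_matrix} one has $\mathcal{H}(\mathbf{B}_{n_i}^{\delta_i})=(k_{i,+}+k_{i,-})\,\mathcal{H}(\mathbf{T}_{n_i}^{\delta_i})$. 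Hence $\mathcal{H}(\mathbf{A})$ has \emph{exactly the same form} as the matrix $\mathbf{A}$ treated in Theorem \ref{CG_bounded_sprctrum} for the symmetric case $k_{i,+}=k_{i,-}$ (with the coefficient $k_{i,+}+k_{i,-}$ in place of $2k_{i,+}$), and $\mathbf{P}$ in \eqref{pre_definition} is by construction built from precisely the blocks $\mathbf{A}_{n_i}$ and $\tau(\mathcal{H}(\mathbf{T}_{n_i}^{\delta_i}))$.

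Next I would rerun the Rayleigh-quotient computation of Theorem \ref{CG_bounded_sprctrum} verbatim: using Cholesky factorizations $\mathbf{A}_{n_i}=\mathbf{L}_{n_i}\mathbf{L}_{n_i}^{\top}$ (legitimate by Lemma \ref{tridiag_eig}), each of the three ratios
\[
\frac{\mathbf{y}^{\top}\bigl(\mathbf{A}_{n_1^{+}}\otimes\mathcal{H}(\mathbf{B}_{n_1}^{\alpha})\bigr)\mathbf{y}}{\mathbf{y}^{\top}\bigl(\mathbf{A}_{n_1^{+}}\otimes\tau(\mathcal{H}(\mathbf{B}_{n_1}^{\alpha}))\bigr)\mathbf{y}},\qquad\text{and the analogues in the }y,z\text{ directions}
\]
is transformed by the substitution $\mathbf{z}=(\mathbf{L}_{n_3}^{\top}\otimes\mathbf{L}_{n_2}^{\top}\otimes\mathbf{I}_{n_1})\mathbf{y}$ (and permuted variants) into a quotient governed by $\tau(\mathcal{H}(\mathbf{T}_{n_i}^{\delta_i}))^{-1}\mathcal{H}(\mathbf{T}_{n_i}^{\delta_i})$, whose eigenvalues lie strictly in $(1/2,3/2)$ by \eqref{1D_bound} (which in turn rests on Lemmas \ref{coef_prop1}, \ref{coef_prop2} and \ref{monotony_coefficients}). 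Each ratio is therefore strictly between $1/2$ and $3/2$. Combining the numerator $\mathbf{A}_N$ (ratio $1$ against itself) with these three ratios via Lemma \ref{ineq_lemma} gives
\[
\frac12=\min\Bigl\{1,\tfrac12\Bigr\}<\frac{\mathbf{y}^{\top}\mathcal{H}(\mathbf{A})\mathbf{y}}{\mathbf{y}^{\top}\mathbf{P}\mathbf{y}}<\max\Bigl\{1,\tfrac32\Bigr\}=\frac32
\]
for every nonzero real $\mathbf{y}$, and since $\mathbf{P}^{-1/2}\mathcal{H}(\mathbf{A})\mathbf{P}^{-1/2}$ is symmetric with $\lambda_{\min},\lambda_{\max}$ equal to the infimum and supremum of this generalized Rayleigh quotient, the conclusion follows.

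I do not anticipate a genuine obstacle here: the only thing to be careful about is the bookkeeping that taking $\mathcal{H}(\cdot)$ distributes correctly over the Kronecker products (because the tridiagonal factors are already symmetric, there is no cross term), and that the constants $k_{i,\pm}>0$ guarantee $k_{i,+}+k_{i,-}>0$ so the positive-definiteness inputs to the one-dimensional lemma are unaffected by the sign of $k_{i,+}-k_{i,-}$. Strictly speaking one should also note $\mathbf{P}\succ\mathbf{O}$ (Lemma \ref{spd_tau_pre}) so that $\mathbf{P}^{-1/2}$ is well defined and the similarity $\mathbf{P}^{-1/2}\mathcal{H}(\mathbf{A})\mathbf{P}^{-1/2}$ and the congruence $\mathbf{P}^{-1}\mathcal{H}(\mathbf{A})$ share the same (real) spectrum. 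The proof is then complete.
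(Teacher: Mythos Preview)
Your proposal is correct and follows essentially the same route as the paper: both observe that $\mathcal{H}(\mathbf{A})$ has exactly the structure of the symmetric coefficient matrix treated in Theorem \ref{CG_bounded_sprctrum}, reuse the Rayleigh-quotient bounds \eqref{Rayleigh_quotient_1}--\eqref{Rayleigh_quotient_3} (obtained via the Cholesky-factorization trick and \eqref{1D_bound}), and combine them with the trivial ratio for $\mathbf{A}_N$ through Lemma \ref{ineq_lemma}. Your additional bookkeeping remarks (distribution of $\mathcal{H}$ over Kronecker products, positive definiteness of $\mathbf{P}$) are accurate and do not alter the argument.
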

\begin{proof}
Let $(\lambda, \mathbf{w})$ be an arbitrary eigenpair of $\mathbf{P}^{-\frac{1}{2}}   \mathcal{H}(\mathbf{A})  \mathbf{P}^{-\frac{1}{2}}$. Then, it holds that
\begin{eqnarray*}
    \lambda &=& \frac{ \mathbf{w}^T    \mathbf{P}^{-\frac{1}{2}}   \mathcal{H}(\mathbf{A})  \mathbf{P}^{-\frac{1}{2}} \mathbf{w} }{\mathbf{w}^T \mathbf{w}} \\
    &=& \frac{ \mathbf{y}^T    \mathcal{H}(\mathbf{A}) \mathbf{y} }{\mathbf{y}^T \mathbf{P} \mathbf{y}},\,\, (\mathbf{y}=\mathbf{P}^{-\frac{1}{2}} \mathbf{w})\\
    &=&\frac{ \mathbf{y}^T    \left(\mathbf{A}_N +\eta_\alpha \mathbf{A}_{n_1^{+}} \otimes \mathcal{H}(\mathbf{B}_{n_1}^{\alpha})+\eta_\beta \mathbf{A}_{n_3} \otimes \mathcal{H}(\mathbf{B}_{n_2}^{\beta}) \otimes \mathbf{A}_{n_1}+\eta_\gamma \mathcal{H}(\mathbf{B}_{n_3}^{\gamma}) \otimes \mathbf{A}_{n_3^{-}}\right) \mathbf{y} }{ \mathbf{y}^T    \left(\mathbf{A}_N +\eta_\alpha \mathbf{A}_{n_1^{+}} \otimes \tau(\mathcal{H}(\mathbf{B}_{n_1}^{\alpha}))+\eta_\beta \mathbf{A}_{n_3} \otimes \tau(\mathcal{H}(\mathbf{B}_{n_2}^{\beta})) \otimes \mathbf{A}_{n_1}+\eta_\gamma \tau(\mathcal{H}(\mathbf{B}_{n_3}^{\gamma})) \otimes \mathbf{A}_{n_3^{-}}\right) \mathbf{y}}.
\end{eqnarray*}
Now, combining (\ref{Rayleigh_quotient_1})-(\ref{Rayleigh_quotient_3}) with Lemma \ref{ineq_lemma}, we have
\begin{equation*}
    \frac{1}{2}=\min\{1,\frac{1}{2}\}< \frac{ \mathbf{w}^T    \mathbf{P}^{-\frac{1}{2}}   \mathcal{H}(\mathbf{A})  \mathbf{P}^{-\frac{1}{2}} \mathbf{w} }{\mathbf{w}^T \mathbf{w}} < \max\{1,\frac{3}{2}\}=\frac{3}{2},
\end{equation*}
which, combined with the Rayleigh quotient theorem \cite{chen2005matrix} for real symmetric matrix implies 
\begin{equation*}
    \frac{1}{2}< \lambda(\mathbf{P}^{-1/2} \mathcal{H}(\mathbf{A}) \mathbf{P}^{-1/2}) < \frac{3}{2}.
\end{equation*}
The proof is complete.
\end{proof}
To estimate the upper bound of $\rho(\mathcal{S}(\mathbf{P}^{-\frac{1}{2}}  \mathbf{A} \mathbf{P}^{-\frac{1}{2}}))$, the relationship between the real and imaginary parts of the generating function for $\mathbf{T}_{n_i}^{\delta_i}$ is needed, which is presented in the following lemma.

\begin{lemma}\label{non_symmetric_ratio}
Let $g_{\delta_i}( \theta )$ be the generating function of the Toeplitz matrix $\mathbf{T}_{n_i}^{\delta_i}$ defined in Equation (\ref{T_ni}). For $\delta_i\in(0,1)$ and $\forall \theta \in \mathbb{R} \backslash\{2k\pi \mid k \in \mathbb{Z}\}$, we have, 
\begin{description}
  \item[(i)]  $\operatorname{Re}(g_{\delta_i}( \theta ))>0$;
  \item[(ii)]  $\frac{\left|\operatorname{Im}(g_{\delta_i}( \theta ))\right|}{\operatorname{Re}(g_{\delta_i}( \theta ))}<\tan{\left( \frac{\delta_i}{2} \pi \right)}$.
\end{description}
\end{lemma}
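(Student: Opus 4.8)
The plan is to write down the generating function $g_{\delta_i}(\theta)$ explicitly as a power series in $e^{\mathrm{i}\theta}$ whose coefficients are the $q_k^{(\delta_i)}$, and then to recognize that, up to the single nonzero coefficient of the $e^{-\mathrm{i}\theta}$ term, this series is (a constant multiple of) a known, well-studied function. Concretely, since $\mathbf{T}_{n_i}^{\delta_i}$ has symbol
$g_{\delta_i}(\theta)=q_0^{(\delta_i)}e^{-\mathrm{i}\theta}+\sum_{k=1}^{\infty}q_k^{(\delta_i)}e^{\mathrm{i}k\theta}$, and using $q_k^{(\delta_i)}=s_{k-1}^{(\delta_i)}-s_k^{(\delta_i)}$ for $k\ge 1$ together with $q_0^{(\delta_i)}=-s_0^{(\delta_i)}$, I would first resum via Abel summation / telescoping. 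Recalling that $s_k^{(\delta_i)}$ are the second-difference coefficients of the sequence $(k+\tfrac12)^{\delta_i}$, the series $\sum_k s_k^{(\delta_i)}e^{\mathrm{i}k\theta}$ should collapse (after multiplying by $(1-e^{\mathrm{i}\theta})$-type factors coming from the differencing) to something proportional to $(1-e^{\mathrm{i}\theta})^{?}$ — in fact I expect $g_{\delta_i}(\theta)$ to be expressible, up to a positive prefactor and a nonnegative weight, as a positive combination of $\left(2\sin\tfrac{\theta}{2}\right)^{\delta_i}e^{\mathrm{i}\phi(\theta)}$ with phase $\phi(\theta)=\pm\tfrac{\delta_i}{2}(\pi-\theta)$ or similar; this is exactly the kind of symbol that appears for Grünwald–Letnikov / fractional-derivative Toeplitz matrices (cf.\ the classical symbol $(1-e^{\mathrm{i}\theta})^{\delta}$ whose argument is bounded by $\tfrac{\delta}{2}\pi$ in modulus).

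The key steps, in order, would be: (1) write $g_{\delta_i}(\theta)$ as a single closed-form expression by telescoping the $q_k$'s against the $s_k$'s and identifying the resulting series with an integral representation $s_k^{(\delta_i)}=\tfrac{1}{\Gamma(-\delta_i)}\int_0^\infty(\text{kernel})\,dt$ or, more elementarily, with the binomial-type expansion of a power of $(1-e^{\mathrm{i}\theta})$ plus lower-order analytic corrections from the finitely many "boundary" coefficients $s_0,s_1$; (2) from that closed form, read off $\operatorname{Re}(g_{\delta_i}(\theta))$ and $\operatorname{Im}(g_{\delta_i}(\theta))$ as a positive amplitude times $\cos\phi(\theta)$ and $\sin\phi(\theta)$ respectively, where $\phi(\theta)$ is the argument; (3) prove $\operatorname{Re}(g_{\delta_i})>0$ by showing $|\phi(\theta)|<\tfrac{\pi}{2}$ on $\theta\in(0,2\pi)$, which for $\delta_i\in(0,1)$ follows since the dominant phase is $\tfrac{\delta_i}{2}(\pi-\theta)\in(-\tfrac{\delta_i\pi}{2},\tfrac{\delta_i\pi}{2})\subset(-\tfrac{\pi}{2},\tfrac{\pi}{2})$; (4) for part (ii), observe $\bigl|\operatorname{Im}(g_{\delta_i})\bigr|/\operatorname{Re}(g_{\delta_i})=|\tan\phi(\theta)|<\tan\bigl(\tfrac{\delta_i}{2}\pi\bigr)$, again using the bound $|\phi(\theta)|<\tfrac{\delta_i}{2}\pi$ and monotonicity of $\tan$ on $(-\tfrac{\pi}{2},\tfrac{\pi}{2})$. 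Throughout I would lean on Lemma \ref{coef_prop1} and Lemma \ref{coef_prop2} to control the sign and monotonicity of the real coefficients, which is what ultimately forces the argument of $g_{\delta_i}$ into the sector of half-angle $\tfrac{\delta_i}{2}\pi$.

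The main obstacle I anticipate is Step (1)–(2): getting a genuinely usable closed form (or integral representation) for $g_{\delta_i}(\theta)$ and its argument, because the coefficients $s_k^{(\delta_i)}$ are defined by a piecewise formula with exceptional values at $k=0,1$, so the naive "this is $(1-e^{\mathrm{i}\theta})^{\delta_i}$" identification is only asymptotically true and must be corrected by explicit finite terms. I expect the cleanest route is not to find an exact elementary closed form but to write $g_{\delta_i}(\theta)=c\int_0^{1}\!\bigl(\text{something manifestly in the right sector}\bigr)d\mu(\,\cdot\,)$, i.e.\ exhibit $g_{\delta_i}(\theta)$ as a nonnegative superposition of elementary symbols each of which individually has argument of modulus at most $\tfrac{\delta_i}{2}\pi$; then the sector bound is preserved under the superposition. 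Making that superposition representation precise — and verifying the weights are nonnegative using exactly the inequalities in Lemmas \ref{coef_prop1} and \ref{coef_prop2} — is where the real work lies; once it is in place, parts (i) and (ii) are immediate consequences of the geometry of the sector and the monotonicity of $\tan$.
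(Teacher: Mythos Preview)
Your intuition is correct --- the conclusion is precisely that $g_{\delta_i}(\theta)$ lies in the open sector $\{z:|\arg z|<\tfrac{\delta_i}{2}\pi\}$ --- but the proposal has a genuine gap at the step you yourself flag as the obstacle. Telescoping the $q_k^{(\delta_i)}$ against the $s_k^{(\delta_i)}$ does \emph{not} collapse to a power of $(1-e^{\mathbf{i}\theta})$: the underlying sequence is $(k+\tfrac12)^{\delta_i}$, not a binomial sequence, so what remains after differencing three times is the series $\sum_{k\ge 0}(k+\tfrac12)^{\delta_i}e^{\mathbf{i}k\theta}$, which has no elementary closed form. Your fallback ``nonnegative superposition of sector symbols'' idea is appealing but you give no candidate decomposition, and the sign/monotonicity information in Lemmas~\ref{coef_prop1}--\ref{coef_prop2} is about the \emph{real} coefficients $q_k^{(\delta_i)}$, not about a splitting of $g_{\delta_i}$ into complex pieces with controlled argument; it is not clear how to extract a sector statement from those lemmas alone.

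The missing tool is an explicit evaluation of $\sum_{k\ge 0}(k+\tfrac12)^{\delta_i}z^k$. The paper recognises this as the Lerch transcendent $\Phi(z,-\delta_i,\tfrac12)$ and invokes its analytic continuation
\[
\Phi(z,s,\nu)=z^{-\nu}\Gamma(1-s)\sum_{n\in\mathbb{Z}}(-\log z+2n\pi\mathbf{i})^{s-1}e^{2n\pi\mathbf{i}\nu},
\]
which for $z=e^{\mathbf{i}\theta}$, $s=-\delta_i$, $\nu=\tfrac12$ turns $g_{\delta_i}(\theta)$ into the product of a purely imaginary trigonometric factor $\bigl(2\sin\tfrac{3\theta}{2}-6\sin\tfrac{\theta}{2}\bigr)\mathbf{i}$ and the series $\Gamma(1+\delta_i)\sum_{n}(-1)^n(2n\pi-\theta)^{-\delta_i-1}\mathbf{i}^{-\delta_i-1}$. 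Splitting the latter over $n\ge 1$ and $n\le 0$ and pairing gives two positive alternating sums $A_1,A_2>0$ such that $\operatorname{Re}g_{\delta_i}$ and $\operatorname{Im}g_{\delta_i}$ are proportional to $(A_1+A_2)\cos\tfrac{\delta_i\pi}{2}$ and $(A_1-A_2)\sin\tfrac{\delta_i\pi}{2}$ respectively; then (i) follows from elementary sign checks on each factor, and (ii) from $\frac{|A_1-A_2|}{A_1+A_2}<1$. This is exactly the ``single phase $\phi(\theta)$'' picture you were aiming for, but the phase comes from the ratio $(A_1-A_2)/(A_1+A_2)$ rather than from a Gr\"unwald--Letnikov-type formula, and obtaining it requires the Lerch identity, not telescoping.
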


\begin{proof}
For $\delta_i\in \mathbbm{C}$ with $\textrm{Re}(\delta_i)>0$, it has been shown that $|q_i^{(\delta_i)}|=\mathcal{O}(i^{-2-\textrm{Re}(\delta_i)})$, see Lemma 3.4 in \cite{donatelli2018spectral}, then $g_{\delta_i}( \theta )$ is analytic on $\{s\in\mathbbm{C}\big|\textrm{Re}(s)>0\}$ with respect to $\delta_i$. Hence with the analytic continuation of the Lerch transcendent \cite{bateman1953higher} $\Phi(z,s,\nu)=\sum_{k=0}^{\infty}(k+\nu)^{-s}z^{k}$ that is
\[\Phi\vert_{S_{1}}(z,s,\nu)=z^{-\nu}\Gamma(1-s)\sum\limits_{n=-\infty}^{\infty}(-\log z+2n\pi \mathbf{i})^{s-1}e^{2n\pi \mathbf{i}\nu},\]
for $z\in\mathbb{C}\setminus[1,\infty),~\lvert\arg(-\log z+2n\pi \mathbf{i})\rvert\leq\pi,~0<\nu\leq 1,~s\in S_{1}=\{s\in\mathbb{C}\big\vert\textrm{Re}~s<0\}$, we have for $\delta_i\in\{s\in\mathbb{C}\big\vert 0<\textrm{Re}~(s)<1\},~\theta\in(0,\pi]$,
\begin{align*}
g_{\delta_i}( \theta )&=\sum_{j=0}^{\infty}q_j^{(\delta_i)} e^{\mathbf{i} (j-1) \theta}\\
&=-e^{-\mathbf{i}\theta}\Phi(e^{\mathbf{i}\theta},-\delta_i,\frac{1}{2})+3\Phi(e^{\mathbf{i}\theta},-\delta_i,\frac{1}{2})-3e^{\mathbf{i}\theta}\Phi(e^{\mathbf{i}\theta},-\delta_i,\frac{1}{2})+e^{2\mathbf{i}\theta}\Phi(e^{\mathbf{i}\theta},-\delta_i,\frac{1}{2})\\
&=\left(e^{2\mathbf{i}\theta}-3e^{\mathbf{i}\theta}-e^{-\mathbf{i}\theta}+3\right)\Phi(e^{\mathbf{i}\theta},-\delta_i,\frac{1}{2})\\
&=\left(e^{2\mathbf{i}\theta}-3e^{\mathbf{i}\theta}-e^{-\mathbf{i}\theta}+3\right) \left((e^{\mathbf{i}\theta})^{\frac{1}{2}}\Gamma(1+\delta_i)\sum\limits_{n=-\infty}^{\infty}(-\mathbf{i}\theta+2n\pi\mathbf{i})^{-\delta_i-1} e^{2n\pi\mathbf{i}\frac{1}{2}}\right)     \\ 
&=\left(e^{\frac{3}{2}\mathbf{i}\theta}-3e^{\frac{1}{2}\mathbf{i}\theta}-e^{-\frac{3}{2}\mathbf{i}\theta}+3e^{-\frac{1}{2}\mathbf{i}\theta}\right) \left(\Gamma(1+\delta_i)\sum\limits_{n=-\infty}^{\infty}(-1)^n(-\mathbf{i}\theta+2n\pi\mathbf{i})^{-\delta_i-1} \right)
\end{align*}
Note that 
$$
e^{\frac{3}{2}\mathbf{i}\theta}-3e^{\frac{1}{2}\mathbf{i}\theta}-e^{-\frac{3}{2}\mathbf{i}\theta}+3e^{-\frac{1}{2}\mathbf{i}\theta}=\left(2\sin(\frac{3}{2}\theta)-6\sin(\frac{1}{2}\theta)\right)\mathbf{i},
$$ 
which is a pure imaginary number. For $\sum\limits_{n=-\infty}^{\infty}(-1)^n(-\mathbf{i}\theta+2n\pi\mathbf{i})^{-\delta_i-1}$, we have
\begin{align*}
&\sum\limits_{n=-\infty}^{\infty}(-1)^n(-\mathbf{i}\theta+2n\pi\mathbf{i})^{-\delta_i-1}\\
=&\sum\limits_{n=1}^{\infty}(-1)^n(-\mathbf{i}\theta+2n\pi\mathbf{i})^{-\delta_i-1} + \sum\limits_{n=0}^{\infty}(-1)^{-n}(-\mathbf{i}\theta-2n\pi\mathbf{i})^{-\delta_i-1}\\
=&\sum\limits_{n=0}^{\infty}(-1)^{n+1}(-\mathbf{i}\theta+2(n+1)\pi\mathbf{i})^{-\delta_i-1} + \sum\limits_{n=0}^{\infty}(-1)^{-n}(-\mathbf{i}\theta-2 n\pi\mathbf{i})^{-\delta_i-1}\\
=&\sum\limits_{n=0}^{\infty}(-1)^{n+1}(-\theta+2(n+1)\pi)^{-\delta_i-1}e^{\frac{-\delta_i-1}{2}\pi\mathbf{i}} + \sum\limits_{n=0}^{\infty}(-1)^{n}(\theta+2n\pi)^{-\delta_i-1}e^{\frac{\delta_i+1}{2}\pi\mathbf{i}}\\
=&\sum\limits_{n=0}^{\infty}(-1)^n\big[(-\theta+2(n+1)\pi)^{-\delta_i-1}-(\theta+2n\pi)^{-\delta_i-1}\big]\sin(\frac{\delta_i}{2}\pi) \\
&\hspace{20pt} +\mathbf{i}\sum\limits_{n=0}^{\infty}(-1)^n\big[(-\theta+2(n+1)\pi)^{-\delta_i-1}+(\theta+2n\pi)^{-\delta_i-1}\big]\cos(\frac{\delta_i}{2}\pi).
\end{align*}

Therefore, for $\delta_i\in(0,1)$ and $\forall\theta\in(0,\pi]$, we have 
\begin{equation*}
\begin{aligned}
&\operatorname{Re}(g_{\delta_i}(\theta ))\\
=&-\Gamma(1+\delta_i)\cos(\frac{\delta_i}{2}\pi)\left(2\sin(\frac{3}{2}\theta)-6\sin(\frac{1}{2}\theta)\right)\sum\limits_{n=0}^{\infty}(-1)^n\big[(-\theta+2(n+1)\pi)^{-\delta_i-1}+ (\theta+2n\pi)^{-\delta_i-1}\big].
\end{aligned}
\end{equation*}
Note that for $\forall\theta\in(0,\pi]$, it is obvious that $\Gamma(1+\delta_i)>0$, $\cos(\frac{\delta_i}{2}\pi)>0$. Moreover, for $\theta\in(0,\pi)$, we have
\begin{eqnarray}
 2\sin(\frac{3}{2}\theta)-6\sin(\frac{1}{2}\theta)&=&2\sin(\theta+\frac{\theta}{2})-6\sin(\theta-\frac{\theta}{2})\nonumber\\
 &=&2\sin(\theta)\cos(\frac{\theta}{2})+2\cos(\theta)\sin(\frac{\theta}{2})-6\left(\sin(\theta)\cos(\frac{\theta}{2})-\cos(\theta)\sin(\frac{\theta}{2})\right)\nonumber\\
 &=&4\left(2\sin(\frac{\theta}{2})\cos(\theta)-\sin(\theta)\cos(\frac{\theta}{2})\right)\nonumber\\
 &=&4\left(2\sin(\frac{\theta}{2})\cos(\frac{\theta}{2})\frac{\cos(\theta)}{\cos(\frac{\theta}{2})}-\sin(\theta)\cos(\frac{\theta}{2})\right)\,\Big(\cos(\frac{\theta}{2})\ne 0 ~\text{as}~\theta\in(0,\pi)\Big)\nonumber\\
 &=&4\left(\sin(\theta)\frac{2\cos^2(\frac{\theta}{2})-1}{\cos(\frac{\theta}{2})}-\sin(\theta)\cos(\frac{\theta}{2})\right)\nonumber\\
 &=&4\left(2\sin(\theta)\cos(\frac{\theta}{2})-\sin(\theta)\sec(\frac{\theta}{2})-\sin(\theta)\cos(\frac{\theta}{2})\right)\nonumber\\
 &=&4\sin(\theta)\left(\cos(\frac{\theta}{2})-\sec(\frac{\theta}{2})\right)\nonumber\\
 &<&0.\nonumber
\end{eqnarray}
In particular, when $\theta=\pi$,
\begin{equation}\nonumber
2\sin(\frac{3}{2}\theta)-6\sin(\frac{1}{2}\theta)=2\sin(\frac{3\pi}{2})-6\sin(\frac{\pi}{2})=-2-6=-8<0.
\end{equation}
As a result, for $\forall\theta\in(0,\pi]$, we obtain, 
$$
2\sin(\frac{3}{2}\theta)-6\sin(\frac{1}{2}\theta)<0.
$$
At last, since both $\sum\limits_{n=0}^{\infty}(-1)^n(-\theta+2(n+1)\pi)^{-\delta_i-1}$ and $\sum\limits_{n=0}^{\infty}(-1)^n(\theta+2n\pi)^{-\delta_i-1}$ are alternating series, one can easily check that they converge by the Leibniz criterion. Therefore, we can denote the sum of these two alternating series as $A_1$ and $A_2$, respectively. A straightforward calculation implies that 
\begin{eqnarray}
A_1&:=&\sum\limits_{n=0}^{\infty}(-1)^n\big(-\theta+2(n+1)\pi)^{-\delta_i-1}\nonumber\\
&=&\left[(\frac{1}{2\pi-\theta})^{\delta_i+1}-(\frac{1}{4\pi-\theta})^{\delta_i+1}\right]+\left[(\frac{1}{6\pi-\theta})^{\delta_i+1}-(\frac{1}{8\pi-\theta})^{\delta_i+1}\right]+\cdots+\nonumber\\
&&\left[(\frac{1}{2n\pi-\theta})^{\delta_i+1}-(\frac{1}{2n\pi+2\pi-\theta})^{\delta_i+1}\right]+\cdots\nonumber\\
&>&0.\nonumber
\end{eqnarray}
By an analogous argument, we conclude that 
$$
A_2:=\sum\limits_{n=0}^{\infty}(-1)^n(\theta+2n\pi)^{-\delta_i-1}>0.
$$
As a result, we get
$$
A_1+A_2=\sum\limits_{n=0}^{\infty}(-1)^n\big[(-\theta+2(n+1)\pi)^{-\delta_i-1}+ (\theta+2 n\pi)^{-\delta_i-1}\big]>0,
$$ 
which indicates that $\operatorname{Re}(g_{\delta_i}( \theta ))>0$.

Moreover, for $({\bf ii})$, we consider 
\begin{equation*}
\begin{aligned}
&\frac{\left|\operatorname{Im}(g_{\delta_i}( \theta ))\right|}{\operatorname{Re}(g_{\delta_i}( \theta ))}\\
=&\frac{\left|\Gamma(1+\delta_i)\sin(\frac{\delta_i}{2}\pi)\left(2\sin(\frac{3}{2}\theta)-6\sin(\frac{1}{2}\theta)\right)\sum\limits_{n=0}^{\infty}(-1)^n\big[(-\theta+2(n+1)\pi)^{-\delta_i-1}- (\theta+2n\pi)^{-\delta_i-1}\big]\right|}{-\Gamma(1+\delta_i)\cos(\frac{\delta_i}{2}\pi)\left(2\sin(\frac{3}{2}\theta)-6\sin(\frac{1}{2}\theta)\right)\sum\limits_{n=0}^{\infty}(-1)^n\big[(-\theta+2(n+1)\pi)^{-\delta_i-1}+ (\theta+2n\pi)^{-\delta_i-1}\big]}\\
=&\dfrac{\left|\sum\limits_{n=0}^{\infty}(-1)^n\big[(-\theta+2(n+1)\pi)^{-\delta_i-1}- (\theta+2n\pi)^{-\delta_i-1}\big]\right|}{\sum\limits_{n=0}^{\infty}(-1)^n\big[(-\theta+2(n+1)\pi)^{-\delta_i-1}+ (\theta+2n\pi)^{-\delta_i-1}\big]}\tan{\left(\frac{\delta_i}{2} \pi \right)}\\
=:&\frac{|A_1-A_2|}{A_1+A_2}\tan{\left( \frac{\delta_i}{2} \pi \right)},\\
<&\frac{|A_1|+|A_2|}{A_1+A_2}\tan{\left( \frac{\delta_i}{2} \pi \right)},\\
=&\tan{\left( \frac{\delta_i}{2} \pi \right)},
\end{aligned}
\end{equation*}
where the penultimate inequality holds by the triangle inequality for $A_1,A_2>0$, which completes the proof.

Similarly, we have for $\forall\theta\in[-\pi,0)$,
\begin{equation*}
\operatorname{Re}(g_{\delta_i}( \theta ))=\operatorname{Re}(g_{\delta_i}( -\theta ))>0
\end{equation*}
and
\[\frac{\left|\operatorname{Im}(g_{\delta_i}( \theta ))\right|}{\operatorname{Re}(g_{\delta_i}( \theta ))}=\dfrac{\left|-\textrm{Im}(g_{\delta_i}(-\theta ))\right|}{\textrm{Re}(g_{\delta_i}(-\theta ))}<\tan{\left( \frac{\delta_i}{2} \pi \right)}.\]
Hence, by periodicity, we have for $\forall \theta \in \mathbb{R} \backslash\{2 k \pi \mid k \in \mathbb{Z}\}$
\begin{equation*}
\operatorname{Re}(g_{\delta_i}( \theta))>0 \quad \text {and}\quad \frac{\left|\operatorname{Im}(g_{\delta_i}( \theta ))\right|}{\operatorname{Re}(g_{\delta_i}( \theta ))}<\tan{\left( \frac{\delta_i}{2} \pi \right)}.
\end{equation*}
 \end{proof}

\begin{remark}
{\rm
From the proof of Lemma \ref{non_symmetric_ratio}, we note that the explicit expression of generating function $g_{\delta_i}( \theta )$ for the Toeplitz matrix $\mathbf{T}_{n_i}^{\delta_i}$ agrees with the result in \cite{chou2021finite}. However, the proofs of two results are quite different, and our method appears to be simpler. Moreover, Lemma \ref{non_symmetric_ratio} also provides the results for $\operatorname{Re}(g_{\delta_i}( \theta ))$, $\operatorname{Im}(g_{\delta_i}( \theta ))$ and $\frac{\left|\operatorname{Im}(g_{\delta_i}( \theta ))\right|}{\operatorname{Re}(g_{\delta_i}( \theta ))}$, which are not included in \cite{chou2021finite}.
}
\end{remark}
Based on Lemma \ref{non_symmetric_ratio}, and by applying the same argument as in Corollary 3.1 of \cite{lin2023tau}, we can readily derive the following lemma.
\begin{lemma}\label{Her_Skew_Her}
	For any $\delta_i \in (0,1)$, any $n_i \in \mathbb{N}^{+}$ and any $v \in \mathbb{C}^{n_i \times 1}$, it holds that 
\begin{equation}
\left|v^* \mathcal{S}(\mathbf{T}^{\delta_i}_{n_i}) v \right| \leqslant \tan{\left( \frac{\delta_i}{2} \pi \right)} v^* \mathcal{H}(\mathbf{T}^{\delta_i}_{n_i}) v.
\end{equation}
\end{lemma}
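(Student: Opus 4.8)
The plan is to pass from the pointwise inequality on the generating function in Lemma~\ref{non_symmetric_ratio}(ii) to the matrix inequality by recognizing $\mathcal{H}(\mathbf{T}^{\delta_i}_{n_i})$ and $\mathcal{S}(\mathbf{T}^{\delta_i}_{n_i})$ as the real and imaginary parts (up to the factor $\mathbf{i}$) of Toeplitz matrices generated by $\operatorname{Re}(g_{\delta_i})$ and $\operatorname{Im}(g_{\delta_i})$, respectively. Concretely, write $g_{\delta_i}(\theta) = \operatorname{Re}(g_{\delta_i}(\theta)) + \mathbf{i}\operatorname{Im}(g_{\delta_i}(\theta))$ and recall that for a Toeplitz matrix $\mathbf{T}_{n_i}^{\delta_i}$ with generating function $g_{\delta_i}$, one has $\mathcal{H}(\mathbf{T}^{\delta_i}_{n_i}) = T_{n_i}(\operatorname{Re} g_{\delta_i})$ and $\mathcal{S}(\mathbf{T}^{\delta_i}_{n_i}) = \mathbf{i}\, T_{n_i}(\operatorname{Im} g_{\delta_i})$, where $T_{n_i}(f)$ denotes the $n_i \times n_i$ Toeplitz matrix with symbol $f$; this follows because taking the Hermitian/skew-Hermitian part of a Toeplitz matrix corresponds to taking the even/odd part of the symbol, i.e.\ $\tfrac{1}{2}(g_{\delta_i}(\theta)+\overline{g_{\delta_i}(\theta)}) = \operatorname{Re} g_{\delta_i}(\theta)$ and $\tfrac12(g_{\delta_i}(\theta)-\overline{g_{\delta_i}(\theta)}) = \mathbf{i}\operatorname{Im} g_{\delta_i}(\theta)$.

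Next I would invoke the standard bound that for a real-valued symbol $f$ with $m \le f(\theta) \le M$ for (almost) all $\theta$, the Hermitian Toeplitz matrix $T_{n_i}(f)$ satisfies $m\,v^*v \le v^* T_{n_i}(f) v \le M\, v^*v$ for all $v \in \mathbb{C}^{n_i\times 1}$. Applying this to the real symbol $h(\theta) := \tan\!\left(\tfrac{\delta_i}{2}\pi\right)\operatorname{Re}(g_{\delta_i}(\theta)) - \operatorname{Im}(g_{\delta_i}(\theta))$, which is nonnegative on $\mathbb{R}\setminus\{2k\pi\}$ by Lemma~\ref{non_symmetric_ratio} (combining part (i), $\operatorname{Re} g_{\delta_i} > 0$, with part (ii), $|\operatorname{Im} g_{\delta_i}|/\operatorname{Re} g_{\delta_i} < \tan(\tfrac{\delta_i}{2}\pi)$), gives $v^* T_{n_i}(h) v \ge 0$, i.e.\ $v^*\bigl(\tan(\tfrac{\delta_i}{2}\pi)\,\mathcal{H}(\mathbf{T}^{\delta_i}_{n_i})\bigr) v \ge v^* T_{n_i}(\operatorname{Im} g_{\delta_i}) v$. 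Doing the same with $h'(\theta) := \tan\!\left(\tfrac{\delta_i}{2}\pi\right)\operatorname{Re}(g_{\delta_i}(\theta)) + \operatorname{Im}(g_{\delta_i}(\theta)) \ge 0$ yields $-v^* T_{n_i}(\operatorname{Im} g_{\delta_i}) v \le \tan(\tfrac{\delta_i}{2}\pi)\, v^*\mathcal{H}(\mathbf{T}^{\delta_i}_{n_i}) v$. Since $v^*\mathcal{S}(\mathbf{T}^{\delta_i}_{n_i})v = \mathbf{i}\, v^* T_{n_i}(\operatorname{Im} g_{\delta_i})v$ and the latter is purely imaginary (so $\big|v^*\mathcal{S}(\mathbf{T}^{\delta_i}_{n_i})v\big| = \big|v^* T_{n_i}(\operatorname{Im} g_{\delta_i})v\big|$), the two one-sided estimates combine to exactly $\big|v^*\mathcal{S}(\mathbf{T}^{\delta_i}_{n_i})v\big| \le \tan(\tfrac{\delta_i}{2}\pi)\, v^*\mathcal{H}(\mathbf{T}^{\delta_i}_{n_i})v$, as claimed.

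I expect the main subtlety to be a careful justification of the symbol-to-matrix dictionary and of the quadratic-form bound for finite $n_i$ rather than in the asymptotic (large-$n_i$) sense. The cleanest route is to write $T_{n_i}(f)$ explicitly via $[T_{n_i}(f)]_{j,k} = \tfrac{1}{2\pi}\int_{-\pi}^{\pi} f(\theta) e^{-\mathbf{i}(j-k)\theta}\,d\theta$ and then, for any $v$, express $v^* T_{n_i}(f) v = \tfrac{1}{2\pi}\int_{-\pi}^{\pi} f(\theta)\, |p_v(\theta)|^2\, d\theta$ with $p_v(\theta) = \sum_{j=1}^{n_i} v_j e^{\mathbf{i} j \theta}$; nonnegativity (resp.\ the lower/upper bounds) of $f$ then transfers directly to the quadratic form. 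Since the integral representation shows both inequalities simultaneously, this is precisely the ``same argument as in Corollary~3.1 of~\cite{lin2023tau}'' alluded to in the statement, so I would keep this final step terse and cite that source for the routine verification.
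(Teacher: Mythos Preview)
Your proposal is correct and is precisely the argument the paper has in mind: the paper does not give a proof but simply says the result follows from Lemma~\ref{non_symmetric_ratio} ``by applying the same argument as in Corollary~3.1 of~\cite{lin2023tau}'', and your integral-representation argument ($v^* T_{n_i}(f) v = \tfrac{1}{2\pi}\int f(\theta)|p_v(\theta)|^2\,d\theta$, applied to $f = \tan(\tfrac{\delta_i}{2}\pi)\operatorname{Re} g_{\delta_i} \pm \operatorname{Im} g_{\delta_i}\ge 0$) is exactly that standard route. The only minor remark is that your identification $\mathcal{S}(\mathbf{T}^{\delta_i}_{n_i}) = T_{n_i}(\mathbf{i}\operatorname{Im} g_{\delta_i})$ uses that the entries $q_k^{(\delta_i)}$ are real (so $\overline{g_{\delta_i}(\theta)}=g_{\delta_i}(-\theta)$), which is of course the case here.
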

With the relationship between the quadratic forms of $\mathcal{S}(\mathbf{T}^{\delta_i}_{n_i})$ and $\mathcal{H}(\mathbf{T}^{\delta_i}_{n_i})$ given in Lemma \ref{Her_Skew_Her}, the upper bound of $\rho(\mathcal{S}(\mathbf{P}^{-\frac{1}{2}}  \mathbf{A} \mathbf{P}^{-\frac{1}{2}}))$ can be obtained.
\begin{lemma}\label{lemma:eigen_Skew} 
Let $\mathbf{A}, \mathbf{P}$ be the matrices defined in \eqref{linear_sys} and \eqref{pre_definition}, respectively. Then,
\begin{equation*}
    \rho\left( \mathbf{P}^{-\frac{1}{2}} \mathcal{S}\left(  \mathbf{A}   \right) \mathbf{P}^{-\frac{1}{2}} \right) \leq \varsigma,    
\end{equation*}
where
\begin{equation*}
    \varsigma= \frac{3}{2} \max _{i \in 1 \wedge 3} \tan{\left( \frac{\delta_i}{2} \pi \right)}\frac{\left| k_{i,+}-k_{i,-} \right|}{ k_{i,+}+k_{i,-} } \geqslant 0.    
\end{equation*}

\end{lemma}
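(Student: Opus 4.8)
The plan is to reduce the claim to a bound on a generalized Rayleigh quotient and then lift the one–dimensional inequality of Lemma~\ref{Her_Skew_Her} to the three‑level Kronecker setting. Since $\mathbf{P}$ is SPD (Lemma~\ref{spd_tau_pre}), the symmetric square root $\mathbf{P}^{-\frac{1}{2}}$ exists, and $\mathbf{M}:=\mathbf{P}^{-\frac{1}{2}}\mathcal{S}(\mathbf{A})\mathbf{P}^{-\frac{1}{2}}$ is real skew-symmetric (as $\mathcal{S}(\mathbf{A})^{\top}=-\mathcal{S}(\mathbf{A})$), hence normal; therefore its spectral radius equals its numerical radius, so
\[
\rho\!\left(\mathbf{P}^{-\frac{1}{2}}\mathcal{S}(\mathbf{A})\mathbf{P}^{-\frac{1}{2}}\right)=\max_{\mathbf{0}\neq\mathbf{w}\in\mathbb{C}^{N}}\frac{\bigl|\mathbf{w}^{*}\mathbf{P}^{-\frac{1}{2}}\mathcal{S}(\mathbf{A})\mathbf{P}^{-\frac{1}{2}}\mathbf{w}\bigr|}{\mathbf{w}^{*}\mathbf{w}}=\max_{\mathbf{0}\neq\mathbf{y}\in\mathbb{C}^{N}}\frac{\bigl|\mathbf{y}^{*}\mathcal{S}(\mathbf{A})\mathbf{y}\bigr|}{\mathbf{y}^{*}\mathbf{P}\mathbf{y}}
\]
after the invertible change of variables $\mathbf{y}=\mathbf{P}^{-\frac{1}{2}}\mathbf{w}$. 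Thus it suffices to bound this last quotient by $\varsigma$. First I would record that $\mathbf{A}_{N}$ and all the $\mathbf{A}_{n_i}$ are symmetric, while from $\mathbf{B}_{n_i}^{\delta_i}=k_{i,+}\mathbf{T}_{n_i}^{\delta_i}+k_{i,-}(\mathbf{T}_{n_i}^{\delta_i})^{\top}$ one gets $\mathcal{S}(\mathbf{B}_{n_i}^{\delta_i})=(k_{i,+}-k_{i,-})\,\mathcal{S}(\mathbf{T}_{n_i}^{\delta_i})$; consequently
\begin{align*}
\mathcal{S}(\mathbf{A})={}&\eta_{\alpha}(k_{1,+}-k_{1,-})\,\mathbf{A}_{n_1^{+}}\otimes\mathcal{S}(\mathbf{T}_{n_1}^{\alpha})+\eta_{\beta}(k_{2,+}-k_{2,-})\,\mathbf{A}_{n_3}\otimes\mathcal{S}(\mathbf{T}_{n_2}^{\beta})\otimes\mathbf{A}_{n_1}\\
&+\eta_{\gamma}(k_{3,+}-k_{3,-})\,\mathcal{S}(\mathbf{T}_{n_3}^{\gamma})\otimes\mathbf{A}_{n_3^{-}}.
\end{align*}

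Applying the triangle inequality to $\mathbf{y}^{*}\mathcal{S}(\mathbf{A})\mathbf{y}$ reduces the task to bounding the three quantities $\eta_{\delta_i}|k_{i,+}-k_{i,-}|\,\bigl|\mathbf{y}^{*}(\cdots\otimes\mathcal{S}(\mathbf{T}_{n_i}^{\delta_i})\otimes\cdots)\mathbf{y}\bigr|$ separately, where $\cdots$ stands for the tridiagonal Kronecker factors. The key step is a Kronecker-lifted version of Lemma~\ref{Her_Skew_Her}. For the first term I would use the Cholesky factorizations $\mathbf{A}_{n_2}=\mathbf{L}_{n_2}\mathbf{L}_{n_2}^{\top}$, $\mathbf{A}_{n_3}=\mathbf{L}_{n_3}\mathbf{L}_{n_3}^{\top}$ (valid by Lemma~\ref{tridiag_eig}), set $\mathbf{z}=(\mathbf{L}_{n_3}^{\top}\otimes\mathbf{L}_{n_2}^{\top}\otimes\mathbf{I}_{n_1})\mathbf{y}$, and invoke the mixed-product property to get
\[
\mathbf{y}^{*}(\mathbf{A}_{n_1^{+}}\otimes\mathcal{S}(\mathbf{T}_{n_1}^{\alpha}))\mathbf{y}=\mathbf{z}^{*}(\mathbf{I}_{n_3}\otimes\mathbf{I}_{n_2}\otimes\mathcal{S}(\mathbf{T}_{n_1}^{\alpha}))\mathbf{z}=\sum_{\ell}\mathbf{z}_{\ell}^{*}\,\mathcal{S}(\mathbf{T}_{n_1}^{\alpha})\,\mathbf{z}_{\ell},
\]
where the $\mathbf{z}_{\ell}\in\mathbb{C}^{n_1}$ are the consecutive length-$n_1$ subvectors of $\mathbf{z}$. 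Summing Lemma~\ref{Her_Skew_Her} over the blocks and reversing the substitution (now with $\mathcal{H}$ in place of $\mathcal{S}$) yields $\bigl|\mathbf{y}^{*}(\mathbf{A}_{n_1^{+}}\otimes\mathcal{S}(\mathbf{T}_{n_1}^{\alpha}))\mathbf{y}\bigr|\le\tan(\tfrac{\alpha}{2}\pi)\,\mathbf{y}^{*}(\mathbf{A}_{n_1^{+}}\otimes\mathcal{H}(\mathbf{T}_{n_1}^{\alpha}))\mathbf{y}$; the $\beta$- and $\gamma$-terms are entirely analogous, the only difference being that the Toeplitz factor now sits in the middle or the leading Kronecker slot (handled after the obvious reshaping of $\mathbf{z}$).

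It then remains to pass from the $\mathcal{H}(\mathbf{T}_{n_i}^{\delta_i})$-blocks to the $\tau$-preconditioner blocks occurring in $\mathbf{P}$. Using $\mathcal{H}(\mathbf{B}_{n_i}^{\delta_i})=(k_{i,+}+k_{i,-})\mathcal{H}(\mathbf{T}_{n_i}^{\delta_i})$, $\tau(\mathcal{H}(\mathbf{B}_{n_i}^{\delta_i}))=(k_{i,+}+k_{i,-})\tau(\mathcal{H}(\mathbf{T}_{n_i}^{\delta_i}))$, together with the bounds $(\ref{Rayleigh_quotient_1})$--$(\ref{Rayleigh_quotient_3})$ — which rest only on the one-dimensional estimate $(\ref{1D_bound})$ and hence remain valid with $\mathbf{B}_{n_i}^{\delta_i}$ replaced by $\mathcal{H}(\mathbf{B}_{n_i}^{\delta_i})$, exactly as already exploited in the proof of Lemma~\ref{prop:eigen_S} — each term is bounded by $\tfrac{3}{2}\tan(\tfrac{\delta_i}{2}\pi)\tfrac{|k_{i,+}-k_{i,-}|}{k_{i,+}+k_{i,-}}\,X_i$, where $X_i:=\eta_{\delta_i}\,\mathbf{y}^{*}(\cdots\otimes\tau(\mathcal{H}(\mathbf{B}_{n_i}^{\delta_i}))\otimes\cdots)\mathbf{y}>0$ (positivity from Lemmas~\ref{tridiag_eig} and \ref{1D_tau_SPD}). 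Since $\mathbf{A}_{N}$ is SPD, $\mathbf{y}^{*}\mathbf{P}\mathbf{y}=\mathbf{y}^{*}\mathbf{A}_{N}\mathbf{y}+X_1+X_2+X_3\ge X_1+X_2+X_3$, so with $c_i:=\tfrac{3}{2}\tan(\tfrac{\delta_i}{2}\pi)\tfrac{|k_{i,+}-k_{i,-}|}{k_{i,+}+k_{i,-}}$,
\[
\frac{\bigl|\mathbf{y}^{*}\mathcal{S}(\mathbf{A})\mathbf{y}\bigr|}{\mathbf{y}^{*}\mathbf{P}\mathbf{y}}\le\frac{c_1X_1+c_2X_2+c_3X_3}{X_1+X_2+X_3}\le\max_{1\le i\le 3}c_i=\varsigma
\]
by Lemma~\ref{ineq_lemma}; taking the maximum over $\mathbf{y}$ finishes the proof. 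I expect the main obstacle to be the bookkeeping in the Kronecker-lifting step: factoring the two SPD tridiagonal matrices out of each of the three terms via Cholesky, making sure the block decomposition is taken along the index carrying the corresponding Toeplitz factor, and working throughout with complex test vectors so that Lemma~\ref{Her_Skew_Her} applies verbatim. A secondary point that should be made explicit is that $(\ref{Rayleigh_quotient_1})$--$(\ref{Rayleigh_quotient_3})$, though written for the symmetric-coefficient case, transfer without change to the $\mathcal{H}(\mathbf{B}_{n_i}^{\delta_i})$ form after the same Cholesky change of variables, since they only invoke $(\ref{1D_bound})$.
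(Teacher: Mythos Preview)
Your proposal is correct and follows essentially the same route as the paper: both reduce to the generalized Rayleigh quotient $\bigl|\mathbf{y}^{*}\mathcal{S}(\mathbf{A})\mathbf{y}\bigr|/(\mathbf{y}^{*}\mathbf{P}\mathbf{y})$, drop the $\mathbf{A}_N$ contribution from the denominator, split the numerator via the triangle inequality, use the Cholesky factorization of the $\mathbf{A}_{n_i}$'s to lift Lemma~\ref{Her_Skew_Her} to the Kronecker setting, pass from $\mathcal{H}(\mathbf{T}_{n_i}^{\delta_i})$ to $\tau(\mathcal{H}(\mathbf{T}_{n_i}^{\delta_i}))$ via \eqref{1D_bound} (equivalently \eqref{Rayleigh_quotient_1}--\eqref{Rayleigh_quotient_3}), and conclude with Lemma~\ref{ineq_lemma}. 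Your write-up is somewhat more explicit than the paper's in justifying why the spectral radius equals the maximum of the Rayleigh quotient and in spelling out the block-sum argument, but the arguments are the same.
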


\begin{proof}
     Let $(\lambda, \mathbf{w})$ be an arbitrary eigenpair of $\mathbf{P}^{-\frac{1}{2}} \mathcal{S}\left(  \mathbf{A}   \right) \mathbf{P}^{-\frac{1}{2}}$. Then, we have 

     \begin{align}\label{skew_eig} \nonumber
        |\lambda| &= \left| \frac{ \mathbf{w}^*    \mathcal{S}(\mathbf{A}) \mathbf{w} }{\mathbf{w}^* \mathbf{P} \mathbf{w}} \right|\\  \nonumber
        &=\frac{\Big| \mathbf{w}^*    \Big(\eta_\alpha (k_{1,+}-k_{1,-}) \mathbf{A}_{n_1^{+}} \otimes \mathcal{S}(\mathbf{T}^{\alpha}_{n_1})+\eta_\beta (k_{2,+}-k_{2,-}) \mathbf{A}_{n_3} \otimes \mathcal{S}(\mathbf{T}^{\beta}_{n_2}) \otimes \mathbf{A}_{n_1}}{ \mathbf{w}^*    \Big(\mathbf{A}_N +\eta_\alpha (k_{1,+}+k_{1,-}) \mathbf{A}_{n_1^{+}} \otimes \tau(\mathcal{H}(\mathbf{T}_{n_1}^{\alpha}))+\eta_\beta (k_{2,+}+k_{2,-}) \mathbf{A}_{n_3} \otimes \tau(\mathcal{H}(\mathbf{T}_{n_2}^{\beta})) \otimes \mathbf{A}_{n_1}  }\\    \nonumber
        & ~~~~~\frac{+\eta_\gamma (k_{3,+}-k_{3,-})\mathcal{S}(\mathbf{T}^{\gamma}_{n_3}) \otimes \mathbf{A}_{n_3^{-}}\Big) \mathbf{w} \Big|}{ + \eta_\gamma (k_{3,+}+k_{3,-}) \tau(\mathcal{H}(\mathbf{T}_{n_3}^{\gamma})) \otimes \mathbf{A}_{n_3^{-}} \Big) \mathbf{w} }\\    \nonumber
        &\leq\frac{\Big| \mathbf{w}^*    \Big(\eta_\alpha (k_{1,+}-k_{1,-}) \mathbf{A}_{n_1^{+}} \otimes \mathcal{S}(\mathbf{T}^{\alpha}_{n_1})+\eta_\beta (k_{2,+}-k_{2,-}) \mathbf{A}_{n_3} \otimes \mathcal{S}(\mathbf{T}^{\beta}_{n_2}) \otimes \mathbf{A}_{n_1}}{ \mathbf{w}^*   \Big(\eta_\alpha (k_{1,+}+k_{1,-}) \mathbf{A}_{n_1^{+}} \otimes \tau(\mathcal{H}(\mathbf{T}_{n_1}^{\alpha}))+\eta_\beta (k_{2,+}+k_{2,-}) \mathbf{A}_{n_3} \otimes \tau(\mathcal{H}(\mathbf{T}_{n_2}^{\beta})) \otimes \mathbf{A}_{n_1}  }\\  \nonumber
        & ~~~~~\frac{+\eta_\gamma (k_{3,+}-k_{3,-})\mathcal{S}(\mathbf{T}^{\gamma}_{n_3}) \otimes \mathbf{A}_{n_3^{-}}\Big) \mathbf{w} \Big|}{ + \eta_\gamma (k_{3,+}+k_{3,-}) \tau(\mathcal{H}(\mathbf{T}_{n_3}^{\gamma})) \otimes \mathbf{A}_{n_3^{-}} \Big) \mathbf{w} }\\     \nonumber
        &\leq\frac{\eta_\alpha \left| k_{1,+}-k_{1,-} \right|\left| \mathbf{w}^* \left( \mathbf{A}_{n_1^{+}} \otimes \mathcal{S}(\mathbf{T}^{\alpha}_{n_1})\right) \mathbf{w} \right| + \eta_\beta \left| k_{2,+}-k_{2,-} \right|\left| \mathbf{w}^* \left(\mathbf{A}_{n_3} \otimes \mathcal{S}(\mathbf{T}^{\beta}_{n_2}) \otimes \mathbf{A}_{n_1}\right) \mathbf{w} \right|}{\eta_\alpha (k_{1,+}+k_{1,-}) \mathbf{w}^* \left( \mathbf{A}_{n_1^{+}} \otimes \tau(\mathcal{H}(\mathbf{T}^{\alpha}_{n_1}))\right) \mathbf{w} + \eta_\beta (k_{2,+}+k_{2,-}) \mathbf{w}^* \left(\mathbf{A}_{n_3} \otimes \tau(\mathcal{H}(\mathbf{T}^{\beta}_{n_2})) \otimes \mathbf{A}_{n_1}\right) \mathbf{w} }\\
        &~~~~~\frac{+ \eta_\gamma \left| k_{3,+}-k_{3,-} \right|\left| \mathbf{w}^* \left(\mathcal{S}(\mathbf{T}^{\gamma}_{n_3}) \otimes \mathbf{A}_{n_3^{-}}\right) \mathbf{w} \right|}{+ \eta_\gamma (k_{3,+}+k_{3,-}) \mathbf{w}^* \left(\tau(\mathcal{H}(\mathbf{T}^{\gamma}_{n_3})) \otimes \mathbf{A}_{n_3^{-}}\right) \mathbf{w} }.  
        \end{align}  
        
Note that



\begin{eqnarray*}
         \frac{\left|\mathbf{w}^*     \left(\mathbf{A}_{n_1^{+}} \otimes \mathcal{S}(\mathbf{T}^{\alpha}_{n_1}) \right) \mathbf{w} \right|}{\mathbf{w}^*     \left(\mathbf{A}_{n_1^{+}} \otimes \tau((\mathcal{H}(\mathbf{T}_{n_1}^{\alpha})) \right) \mathbf{w}} 
         &=& \frac{ \left|\mathbf{w}^*     \left(\mathbf{A}_{n_3} \otimes  \mathbf{A}_{n_2} \otimes \mathcal{S}(\mathbf{T}^{\alpha}_{n_1}) \right) \mathbf{w} \right|}{\mathbf{w}^*     \left(\mathbf{A}_{n_3} \otimes  \mathbf{A}_{n_2} \otimes \tau(\mathcal{H}(\mathbf{T}_{n_1}^{\alpha})) \right) \mathbf{w} }\\
        &=&\frac{ \left|\mathbf{w}^*     \left(\mathbf{L}_{n_3}\mathbf{L}_{n_3}^T \otimes  \mathbf{L}_{n_{2}}\mathbf{L}_{n_{2}}^T \otimes \mathcal{S}(\mathbf{T}^{\alpha}_{n_1}) \right) \mathbf{w} \right|}{\mathbf{w}^*     \left(\mathbf{L}_{n_3}\mathbf{L}_{n_3}^T \otimes \mathbf{L}_{n_{2}}\mathbf{L}_{n_{2}}^T \otimes \tau(\mathcal{H}(\mathbf{T}_{n_1}^{\alpha})) \right) \mathbf{w}}\\
        &=&\frac{ \left|\mathbf{z}^*     \left(\mathbf{I}^T_{n_3} \otimes \mathbf{I}^T_{n_2} \otimes \mathcal{S}(\mathbf{T}^{\alpha}_{n_1})\right) \mathbf{z} \right|}{\mathbf{z}^*     \left(\mathbf{I}^T_{n_3} \otimes \mathbf{I}^T_{n_2} \otimes \tau(\mathcal{H}(\mathbf{T}_{n_1}^{\alpha})) \right) \mathbf{z} }, \, \Big(\mathbf{z}=\left(\mathbf{L}^T_{n_3} \otimes \mathbf{L}^T_{n_2} \otimes \mathbf{I}_{n_1} \right)\mathbf{w}\Big),
\end{eqnarray*}
where $\mathbf{A}_{n_i}=\mathbf{L}_{n_i}\mathbf{L}_{n_i}^T$ for $i=2, 3$ is the Cholesky factorization of the SPD matrix $\mathbf{A}_{n_i}$.

From (\ref{1D_bound}) and Lemma \ref{Her_Skew_Her}, it is easy to check that 
\begin{eqnarray*}
\frac{ \left|\mathbf{z}^*     \left(\mathbf{I}^T_{n_3} \otimes \mathbf{I}^T_{n_2} \otimes \mathcal{S}(\mathbf{T}^{\alpha}_{n_1})\right) \mathbf{z} \right|}{\mathbf{z}^*     \left(\mathbf{I}^T_{n_3} \otimes \mathbf{I}^T_{n_2} \otimes \tau(\mathcal{H}(\mathbf{T}_{n_1}^{\alpha})) \right) \mathbf{z} } \leq \frac{ \left|\mathbf{z}^*     \left(\mathbf{I}^T_{n_3} \otimes \mathbf{I}^T_{n_2} \otimes \mathcal{H}(\mathbf{T}^{\alpha}_{n_1})\right) \mathbf{z} \right|}{\mathbf{z}^*     \left(\mathbf{I}^T_{n_3} \otimes \mathbf{I}^T_{n_2} \otimes \tau(\mathcal{H}(\mathbf{T}_{n_1}^{\alpha})) \right) \mathbf{z} } \tan{\left( \frac{\alpha}{2} \pi \right)}\leq \frac{3}{2}\tan{\left( \frac{\alpha}{2} \pi \right)}.
\end{eqnarray*}
i.e.,
\begin{eqnarray}\label{skew_1}
\frac{\left|\mathbf{w}^*     \left(\mathbf{A}_{n_1^{+}} \otimes \mathcal{S}(\mathbf{T}^{\alpha}_{n_1}) \right) \mathbf{w} \right|}{\mathbf{w}^*     \left(\mathbf{A}_{n_1^{+}} \otimes \tau(\mathcal{H}(\mathbf{T}_{n_1}^{\alpha})) \right) \mathbf{w} }\leq \frac{3}{2}\tan{\left( \frac{\alpha}{2} \pi \right)}.
\end{eqnarray}
Similarly, we have 
\begin{eqnarray}\label{skew_2}
\frac{\left|\mathbf{w}^*     \left(\mathbf{A}_{n_3} \otimes \mathcal{S}(\mathbf{T}^{\beta}_{n_2}) \otimes \mathbf{A}_{n_1} \right) \mathbf{w} \right|}{\mathbf{w}^*     \left(\mathbf{A}_{n_3} \otimes \tau(\mathcal{H}(\mathbf{T}_{n_2}^{\beta})) \otimes \mathbf{A}_{n_1}\right) \mathbf{w} }\leq \frac{3}{2}\tan{\left( \frac{\beta}{2} \pi \right)},
\end{eqnarray}
and 
\begin{eqnarray}\label{skew_3}
\frac{\left|\mathbf{w}^*     \left(\mathcal{S}(\mathbf{T}^{\gamma}_{n_3}) \otimes \mathbf{A}_{n_3^{-}} \right) \mathbf{w} \right|}{\mathbf{w}^*     \left(\tau(\mathcal{H}(\mathbf{B}_{n_3}^{\gamma})) \otimes \mathbf{A}_{n_3^{-}} \right) \mathbf{w} }\leq \frac{3}{2}\tan{\left( \frac{\gamma}{2} \pi \right)}.
\end{eqnarray}

It follows from (\ref{skew_eig}), (\ref{skew_1})-(\ref{skew_3}) and Lemma \ref{ineq_lemma} that
\begin{eqnarray*}
\rho\left(\mathbf{P}^{-\frac{1}{2}} \mathcal{S}\left(  \mathbf{A}   \right) \mathbf{P}^{-\frac{1}{2}}\right) \leq \frac{3}{2} \max _{i \in 1 \wedge 3} \tan{\left( \frac{\delta_i}{2} \pi \right)}\frac{\left| k_{i,+}-k_{i,-} \right|}{ k_{i,+}+k_{i,-} }.
\end{eqnarray*}
The proof is complete.
\end{proof}
Now, it is ready to show the convergence of the GMRES method for solving the two-sided systems (\ref{two_side_system}).
\begin{theorem}\label{theorem_main}
    Let $\mathbf{A}, \mathbf{P}$ be the matrices defined in \eqref{linear_sys} and \eqref{pre_definition}, respectively. Then, the residuals of the iterates generated by applying (restarted or non-restarted) GMRES with an arbitrary initial guess to solve $\mathbf{P}^{-\frac{1}{2}}  \mathbf{A}  \mathbf{P}^{-\frac{1}{2}}\mathbf{v}=\mathbf{P}^{-\frac{1}{2}} \mathbf{b}$ satisfy
    \[
       \| \mathbf{r}_k \|_2 \leq  \omega^{k} \| \mathbf{r}_0 \|_2,
    \]
    where $\mathbf{r}_k = \mathbf{P}^{-\frac{1}{2}} \mathbf{b} - \mathbf{P}^{-\frac{1}{2}} \mathbf{A} \mathbf{P}^{-\frac{1}{2}}\mathbf{v}_k$ is the residual vector at the $k$-th GMRES iteration with $\mathbf{v}_k$ ($k \geq 1$) being the corresponding iterative solution, and $\omega$ is a constant independent of $N$ defined as follows
        $$
        \omega := \sqrt{\frac{2+4\varsigma^2}{3+4\varsigma^2}} \in \left[ \sqrt{\frac{2}{3}},1  \right) \subset (0,1),
        $$
        with $\varsigma$ defined in Lemma \ref{lemma:eigen_Skew}.
\end{theorem}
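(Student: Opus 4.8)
The plan is to apply Lemma~\ref{lemma:gmres} with $\mathbf{Z}:=\mathbf{P}^{-\frac{1}{2}}\mathbf{A}\mathbf{P}^{-\frac{1}{2}}$ and then estimate the resulting contraction factor using the spectral bounds already in hand. Since $\mathbf{P}$ is real symmetric, so is $\mathbf{P}^{-\frac{1}{2}}$, and therefore
\[
\mathcal{H}(\mathbf{Z})=\mathbf{P}^{-\frac{1}{2}}\mathcal{H}(\mathbf{A})\mathbf{P}^{-\frac{1}{2}},\qquad \mathcal{S}(\mathbf{Z})=\mathbf{P}^{-\frac{1}{2}}\mathcal{S}(\mathbf{A})\mathbf{P}^{-\frac{1}{2}}.
\]
By Lemma~\ref{prop:eigen_S}, the eigenvalues of $\mathcal{H}(\mathbf{Z})$ lie in $(\tfrac12,\tfrac32)$; in particular $\mathcal{H}(\mathbf{Z})\succ\mathcal{O}$, so the hypothesis of Lemma~\ref{lemma:gmres} holds, with $\lambda_{\min}(\mathcal{H}(\mathbf{Z}))>\tfrac12$ and $\lambda_{\max}(\mathcal{H}(\mathbf{Z}))<\tfrac32$. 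By Lemma~\ref{lemma:eigen_Skew}, $\rho(\mathcal{S}(\mathbf{Z}))\le\varsigma$.

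Next I would substitute these three bounds into the estimate of Lemma~\ref{lemma:gmres}. The elementary fact needed is that, for positive $a,b$ and nonnegative $c$, the quantity $1-\tfrac{a^{2}}{ab+c^{2}}$ is decreasing in $a$ and increasing in $b$ and in $c$ (a one-line derivative check: $\partial_a\big(a^{2}/(ab+c^{2})\big)=(a^{2}b+2ac^{2})/(ab+c^{2})^{2}>0$, and the dependence on $b$ and $c$ is immediate). Hence replacing $\lambda_{\min}(\mathcal{H}(\mathbf{Z}))$ by $\tfrac12$, $\lambda_{\max}(\mathcal{H}(\mathbf{Z}))$ by $\tfrac32$ and $\rho(\mathcal{S}(\mathbf{Z}))$ by $\varsigma$ can only enlarge the factor, so that
\[
1-\frac{\lambda_{\min}(\mathcal{H}(\mathbf{Z}))^{2}}{\lambda_{\min}(\mathcal{H}(\mathbf{Z}))\lambda_{\max}(\mathcal{H}(\mathbf{Z}))+\rho(\mathcal{S}(\mathbf{Z}))^{2}}
\;\le\; 1-\frac{1/4}{3/4+\varsigma^{2}}
\;=\;\frac{2+4\varsigma^{2}}{3+4\varsigma^{2}}=\omega^{2}.
\]
Plugging this into Lemma~\ref{lemma:gmres} yields $\|\mathbf{r}_k\|_2\le(\omega^{2})^{k/2}\|\mathbf{r}_0\|_2=\omega^{k}\|\mathbf{r}_0\|_2$, which is the asserted bound.

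It then remains to verify the claimed range of $\omega$ and its independence of $N$. The constant $\varsigma$ from Lemma~\ref{lemma:eigen_Skew} depends only on the fractional orders $\alpha,\beta,\gamma$ and on the diffusivity coefficients $k_{i,\pm}$, never on $n_1,n_2,n_3$ or $\Delta t$; hence the same is true of $\omega$. Since $t\mapsto\frac{2+4t}{3+4t}=1-\frac{1}{3+4t}$ is strictly increasing on $[0,\infty)$, equals $\tfrac23$ at $t=0$ and tends to $1$ as $t\to\infty$, we conclude $\omega\in[\sqrt{2/3},1)\subset(0,1)$, which finishes the argument.

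As for difficulty, this theorem is essentially an assembly of the preceding lemmas, so no single step is a genuine obstacle; the only point requiring care is checking the direction of monotonicity of the contraction factor in each of its three arguments, which is what guarantees that substituting the extreme spectral bounds produces an upper (not a lower) estimate. A secondary point worth stating explicitly is the identity $\mathcal{H}(\mathbf{P}^{-\frac{1}{2}}\mathbf{A}\mathbf{P}^{-\frac{1}{2}})=\mathbf{P}^{-\frac{1}{2}}\mathcal{H}(\mathbf{A})\mathbf{P}^{-\frac{1}{2}}$ (and likewise for $\mathcal{S}$), which relies on the symmetry and realness of $\mathbf{P}^{-\frac{1}{2}}$ and is precisely what allows Lemmas~\ref{prop:eigen_S} and~\ref{lemma:eigen_Skew} to be invoked verbatim.
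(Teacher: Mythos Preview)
Your proposal is correct and follows essentially the same approach as the paper: apply Lemma~\ref{lemma:gmres} to $\mathbf{Z}=\mathbf{P}^{-1/2}\mathbf{A}\mathbf{P}^{-1/2}$, invoke Lemmas~\ref{prop:eigen_S} and~\ref{lemma:eigen_Skew} for the spectral bounds, and simplify. Your explicit monotonicity check on the contraction factor and your verification of the range of $\omega$ are welcome additions that the paper leaves implicit, but the overall argument is the same.
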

\begin{proof}
First of all, note that 
\begin{eqnarray*}
    &&\mathcal{H} \left( \mathbf{P}^{-\frac{1}{2}}  \mathbf{A}  \mathbf{P}^{-\frac{1}{2}} \right)\\
    &=&  \mathbf{P}^{-\frac{1}{2}} \mathcal{H}(\mathbf{A}) \mathbf{P}^{-\frac{1}{2}} \\
    &=&  \mathbf{P}^{-\frac{1}{2}} \left(\mathbf{A}_N +\eta_\alpha \mathbf{A}_{n_1^{+}} \otimes \mathcal{H}(\mathbf{B}_{n_1}^{\alpha})+\eta_\beta \mathbf{A}_{n_3} \otimes \mathcal{H}(\mathbf{B}_{n_2}^{\beta}) \otimes \mathbf{A}_{n_1}+\eta_\gamma \mathcal{H}(\mathbf{B}_{n_3}^{\gamma}) \otimes \mathbf{A}_{n_3^{-}}\right) \mathbf{P}^{-\frac{1}{2}}.
\end{eqnarray*}
As has been pointed out in \cite{fu2019stability} that $\mathcal{H}(\mathbf{T}_{n_i}^{\delta_i})$ is SPD. By Lemmas \ref{tridiag_eig} and \ref{spd_tau_pre}, it follows that $\mathcal{H} \left( \mathbf{P}^{-\frac{1}{2}}  \mathbf{A}  \mathbf{P}^{-\frac{1}{2}} \right)$ is positive definite. Therefore, Lemma \ref{lemma:gmres} can be used.

Since $\mathcal{H}\left(\mathbf{P}^{-1/2} \mathbf{A} \mathbf{P}^{-1/2}\right)=\mathbf{P}^{-1/2} \mathcal{H}\left( \mathbf{A} \right) \mathbf{P}^{-1/2}$ and $\mathcal{S}\left( \mathbf{P}^{-\frac{1}{2}}  \mathbf{A}  \mathbf{P}^{-\frac{1}{2}} \right)=\mathbf{P}^{-\frac{1}{2}} \mathcal{S} (\mathbf{A}) \mathbf{P}^{-\frac{1}{2}}$, 
we know by Lemmas \ref{prop:eigen_S} and \ref{lemma:eigen_Skew} that
\begin{equation*}
    \frac{3}{2}< \lambda\left( \mathcal{H}(\mathbf{P}^{-1/2} \mathbf{A} \mathbf{P}^{-1/2}) \right) < \frac{1}{2}
\end{equation*}
and
\begin{equation*}
\rho\left( \mathcal{S}\left( (\mathbf{P}^{-1/2} \mathbf{A} \mathbf{P}^{-1/2} \right)\right) \leq \varsigma,
\end{equation*}
which combined with Lemma \ref{lemma:gmres} indicates  that the residuals of the iterates generated by applying (restarted or non-restarted) GMRES method with an arbitrary initial guess to solve $\mathbf{P}^{-\frac{1}{2}}  \mathbf{A}  \mathbf{P}^{-\frac{1}{2}}\mathbf{v}=\mathbf{P}^{-\frac{1}{2}} \mathbf{b}$ satisfy
\begin{eqnarray*}
\| \mathbf{r}_k \|_2 &\leq& \left( \sqrt{1 - \left(\frac{ (\frac{1}{2})^2}{ (\frac{1}{2})(\frac{3}{2}) + \varsigma^2} \right)}     \right)^{k} \| \mathbf{r}_0 \|_2\\
&=& \left(\sqrt{\frac{2+4\varsigma^2}{3+4\varsigma^2}}  \right)^{k} \| \mathbf{r}_0 \|_2.
\end{eqnarray*}   
\end{proof}
Theorem \ref{theorem_main} establishes that for the two-sided preconditioned system (\ref{two_side_system}), the GMRES method achieves a linear convergence rate with iteration numbers independent of the matrix size. Next, we will present the relationship of the residuals when applying the GMRES method to the one-sided (\ref{one_side_system}) and two-sided preconditioned systems (\ref{two_side_system}).

Noting that $\lambda_{\min}(\mathbf{P}) \geq \lambda_{\min}(\mathbf{A}_N) \geq (\frac{1}{2})^3 = \frac{1}{8}$ from Lemma \ref{tridiag_eig}, the following theorem follows directly from the proofs in \cite{huang2024optimal,huang2025efficient,lin2024single}.
\begin{theorem}\label{residual_relationship}
Let $\hat{\mathbf{u}}_0$ be the initial guess for (\ref{two_side_system}) and $\mathbf{u}_0 := \mathbf{P}^{-1/2}\hat{\mathbf{u}}_0$ be the initial guess for (\ref{one_side_system}). Let $\mathbf{u}_j$ ($\hat{\mathbf{u}}_j$, respectively) be the $j$-th $(j\geq1)$ iteration solution derived by applying the GMRES solver to (\ref{one_side_system}) ((\ref{two_side_system}), respectively) with $\mathbf{u}_0$ ($\hat{\mathbf{u}}_0$, respectively) as their initial guess. Then,
\begin{equation*}
\left\|\mathbf{r}_j\right\|_2 \leq 2\sqrt{2}\left\|\hat{\mathbf{r}}_j\right\|_2
\end{equation*}
where $\mathbf{r}_j:=\mathbf{P}^{-1} \mathbf{b}^{m} - \mathbf{P}^{-1} \mathbf{A} \mathbf{u}_j$ ($\hat{\mathbf{r}}_j := \mathbf{P}^{-1/2} \mathbf{b}^{m} - \mathbf{P}^{-\frac{1}{2}}  \mathbf{A}  \mathbf{P}^{-\frac{1}{2}} \hat{\mathbf{u}}_j$, respectively) denotes the residual vector at the $j$-th GMRES  iteration for (\ref{one_side_system}) ((\ref{two_side_system}), respectively). 
\end{theorem}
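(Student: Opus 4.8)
The plan is to exploit the similarity relation $\mathbf{P}^{-1}\mathbf{A} = \mathbf{P}^{-1/2}\widehat{\mathbf{A}}\,\mathbf{P}^{1/2}$, where $\widehat{\mathbf{A}} := \mathbf{P}^{-\frac{1}{2}}\mathbf{A}\mathbf{P}^{-\frac{1}{2}}$ is the coefficient matrix of the two-sided system~\eqref{two_side_system}. Since $\mathbf{P}$ is symmetric positive definite (Lemma~\ref{spd_tau_pre}), both $\mathbf{P}^{1/2}$ and $\mathbf{P}^{-1/2}$ are well defined and SPD, so this similarity is legitimate. The idea is to transplant each GMRES iterate of the one-sided system~\eqref{one_side_system} into an admissible iterate of~\eqref{two_side_system}, invoke the residual-minimality of GMRES for~\eqref{one_side_system}, and then pay only a factor $\|\mathbf{P}^{-1/2}\|_2$ when passing between the two residual norms.

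Concretely, I would first check that the two affine Krylov spaces line up under the prescribed initial guesses. Writing $\widehat{\mathbf{b}} := \mathbf{P}^{-1/2}\mathbf{b}^{m}$, a direct computation using $\mathbf{u}_0 = \mathbf{P}^{-1/2}\widehat{\mathbf{u}}_0$ gives $\mathbf{P}^{1/2}\mathbf{r}_0 = \widehat{\mathbf{b}} - \widehat{\mathbf{A}}\,\widehat{\mathbf{u}}_0 = \widehat{\mathbf{r}}_0$; combined with $(\mathbf{P}^{-1}\mathbf{A})^{k} = \mathbf{P}^{-1/2}\widehat{\mathbf{A}}^{\,k}\mathbf{P}^{1/2}$ this yields $\mathcal{K}_j(\mathbf{P}^{-1}\mathbf{A},\mathbf{r}_0) = \mathbf{P}^{-1/2}\mathcal{K}_j(\widehat{\mathbf{A}},\widehat{\mathbf{r}}_0)$. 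Consequently every $\mathbf{u}\in\mathbf{u}_0+\mathcal{K}_j(\mathbf{P}^{-1}\mathbf{A},\mathbf{r}_0)$ can be written as $\mathbf{u}=\mathbf{P}^{-1/2}\mathbf{v}$ with $\mathbf{v}\in\widehat{\mathbf{u}}_0+\mathcal{K}_j(\widehat{\mathbf{A}},\widehat{\mathbf{r}}_0)$, and for such a pair one has the key identity $\mathbf{P}^{-1}\mathbf{b}^{m}-\mathbf{P}^{-1}\mathbf{A}\mathbf{u}=\mathbf{P}^{-1/2}(\widehat{\mathbf{b}}-\widehat{\mathbf{A}}\mathbf{v})$.

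Next I would use the optimality of GMRES. Because $\mathbf{r}_j$ has minimal Euclidean norm over $\mathbf{u}\in\mathbf{u}_0+\mathcal{K}_j(\mathbf{P}^{-1}\mathbf{A},\mathbf{r}_0)$, I may evaluate at the particular element corresponding to $\mathbf{v}=\widehat{\mathbf{u}}_j$ (the two-sided GMRES iterate, which indeed lies in $\widehat{\mathbf{u}}_0+\mathcal{K}_j(\widehat{\mathbf{A}},\widehat{\mathbf{r}}_0)$), obtaining
$$
\|\mathbf{r}_j\|_2 \;\le\; \big\|\mathbf{P}^{-1/2}\big(\widehat{\mathbf{b}}-\widehat{\mathbf{A}}\widehat{\mathbf{u}}_j\big)\big\|_2 \;\le\; \|\mathbf{P}^{-1/2}\|_2\,\|\widehat{\mathbf{r}}_j\|_2 .
$$
Finally, since $\mathbf{P}^{-1/2}$ is SPD, $\|\mathbf{P}^{-1/2}\|_2 = \lambda_{\min}(\mathbf{P})^{-1/2} \le (1/8)^{-1/2} = 2\sqrt{2}$, using the bound $\lambda_{\min}(\mathbf{P})\ge (1/2)^3 = 1/8$ recorded just before the theorem (which comes from Lemma~\ref{tridiag_eig}). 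This delivers $\|\mathbf{r}_j\|_2 \le 2\sqrt{2}\,\|\widehat{\mathbf{r}}_j\|_2$.

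The calculation is largely routine; the one point needing care is the bookkeeping of the Krylov-space correspondence — in particular verifying $\mathbf{P}^{1/2}\mathbf{r}_0=\widehat{\mathbf{r}}_0$ so that the affine subspaces, not merely the linear Krylov subspaces, are in bijection under $\mathbf{v}\mapsto\mathbf{P}^{-1/2}\mathbf{v}$. One should also note in passing that $\widehat{\mathbf{A}}$ is nonsingular, since its symmetric part $\mathbf{P}^{-1/2}\mathcal{H}(\mathbf{A})\mathbf{P}^{-1/2}$ is SPD by Lemma~\ref{prop:eigen_S}, so GMRES applied to~\eqref{two_side_system} does not break down and $\widehat{\mathbf{u}}_j$ is well defined, which is what makes the above substitution $\mathbf{v}=\widehat{\mathbf{u}}_j$ admissible.
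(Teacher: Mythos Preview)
Your argument is correct and is precisely the standard similarity/Krylov-correspondence plus GMRES-minimality argument that the paper invokes by citation; the paper itself gives no details beyond noting $\lambda_{\min}(\mathbf{P})\ge (1/2)^3=1/8$ and deferring to \cite{huang2024optimal,huang2025efficient,lin2024single}, so you have effectively written out what those references contain. The key numerical ingredient, $\|\mathbf{P}^{-1/2}\|_2\le\lambda_{\min}(\mathbf{P})^{-1/2}\le 2\sqrt{2}$, matches the paper's exactly.
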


Theorem \ref{residual_relationship} illustrates that the convergence of the GMRES method for the one-sided preconditioned systems (\ref{one_side_system}) is supported by the convergence of that for the two-sided preconditioned systems (\ref{two_side_system}), which, combined with Theorem \ref{theorem_main} implies that when GMRES method is used for solving the one-sided preconditioned systems (\ref{one_side_system}), the convergence rate is linear and the number of iteration is independent of the matrix size.

\section{Numerical experiments}\label{sec:num}
In this section, two numerical examples, covering both symmetric and non-symmetric cases in 2D and 3D problems, are tested to demonstrate the efficiency of the proposed preconditioning strategy. In the implementations, we set $n_1=n_2=n_3=n$ and adopt the MATLAB built-in functions $\mathbf{pcg}$ and $\mathbf{gmres}$ with $\mathit{maxit}=n^3$, $\mathit{restart}=20$ and $tol=10^{-9}$ for symmetric and non-symmetric case, respectively. All numerical experiments are performed on a Lenovo laptop with 16GB RAM, AMD Ryzen 5 4600U with Radeon Graphics $@$ 2.10 GHz using MATLAB R2019a.

In the tables, `PCG($\tau$)/PGMRES($\tau$)' means the CG/GMRES method with our proposed preconditioner $\mathbf{P}$, while `CG/GMRES' means those without preconditioner. For further comparison, CG/GMRES with two circulant-type preconditioners, T. Chan's circulant preconditioner \cite{chan2007introduction,fu2019finite} and Strang's circulant preconditioner \cite{chan2007introduction,fu2019finite}, are also used to solve the corresponding systems. More precisely, all Toeplitz matrices in the coefficient matrix of \eqref{original_matrix_form} are substituted with the two kinds of circulant preconditioners for preconditioning \eqref{original_matrix_form} when CG/GMRES method is used, which are denoted as `PCG(T)/PGMRES(T)' and `PCG(S)/PGMRES(S)', respectively. In addition, ``CPU'' denote the CPU time in seconds for solving the related systems;  `Iter' stands for the average iteration numbers of different methods. When the CPU time exceeds 3000 seconds, we stop the iteration and represent the results in the tables as ``$\dagger$''.


\begin{example}\label{ex_1}
Consider problem (\ref{RLFDEs}) in two dimensions with $\Omega=(0,1)^2$, $T=1$. The exact solution is
\begin{equation*}
u(x, y, t)=  4e^{t}  x^{2}(1-x)^{2} y^{2}(1-y)^{2},
\end{equation*}
and the source term
{\small
\begin{align} \nonumber
g(x, y, t)=& 4e^{t}\Big\{ x^{2} (1-x)^{2} y^{2}(1-y)^{2}\\ \nonumber
& - y^{2}(1-y)^{2} \sum_{k=0}^{2}(-1)^{2-k}  C_{2}^{k} \frac{\Gamma(5-k)}{\Gamma(3-k+\alpha)} \Big[k_{1,+}x^{2-k+\alpha}+k_{1,-}(1-x)^{2-k+\alpha}\Big]\\ \nonumber
&-x^{2}(1-x)^{2} \sum_{k=0}^{2}(-1)^{2-k} C_{2}^{k} \frac{\Gamma(5-k)}{\Gamma(3-k+\beta)} \Big[k_{2,+}y^{2-k+\beta}+k_{2,-}(1-y)^{2-k+\beta}\Big]\Big\}.
\end{align}
}
\end{example}
We first consider the symmetric case for Example \ref{ex_1} with $k_{1,+}=k_{1,-}=k_{2,+}=k_{2,-}=5$. CPU time and iteration numbers with different methods are summarized in Table \ref{table1}. As shown in the table, for different combinations of fractional orders, both preconditioners can effectively reduce the CPU time for convergence by decreasing the number of iterations. For fixed fractional orders, the iteration numbers required by PCG($\tau$) are almost constant as the matrix size increases, agreeing to the theoretical analysis. In contrast, the iteration counts of PCG(T) and PCG(S) are both greater than those of PCG($\tau$) and exhibit a significant increase, especially when the fractional orders approach 0 or in anisotropic cases (e.g., $(\alpha,\beta)=(0.1, 0.9)$). 

For the non-symmetric case, $k_{1,+}=19$, $k_{1,-}=21$, $k_{2,+}=21$, $k_{2,-}=23$ are considered. The numerical results are displayed in Table \ref{table2}. Similar to the symmetric case, PGMRES($\tau$) in the non-symmetric case exhibits the most favorable performance among the tested methods in terms of both computational efficiency and robustness. It consistently requires the smallest and constant number of iterations and the lowest CPU time across all tested fractional orders.


\begin{table}[t]%
\centering
\tabcolsep=2.6pt
\renewcommand{\arraystretch}{0.8}
\caption{The comparisons between the CG and PCG methods with the aforementioned preconditioners for solving Example \ref{ex_1} with $k_{1,+}=k_{1,-}=k_{2,+}=k_{2,-}=5$ at $T=1$.} 
\label{table1}
\begin{tabular}{ccccccccccc}
\toprule
\multirow{2}{*}{$(\alpha,\beta)$} & \multirow{2}{*}{$M$}  & \multirow{2}{*}{$n+1$}  &  \multicolumn{2}{c}{CG} &  \multicolumn{2}{c}{PCG(T)} & \multicolumn{2}{c}{PCG(S)}  & \multicolumn{2}{c}{PCG($\tau$)} \\  \cmidrule(r){4-5} \cmidrule(r){6-7} \cmidrule(r){8-9}\cmidrule(r){10-11}
     &    &  &CPU(s)  &Iter &CPU(s)  &Iter &CPU(s)  &Iter &CPU(s)  &Iter  \\
\midrule
\multirow{4}{*}{(0.1, 0.2)}
         &$2^{3}$   &$2^{6}$   &1.12         &103.00    &0.22     &29.88    &0.16     &19.75    &0.13   &6.00  \\
         &$2^{4}$   &$2^{7}$   &7.78         &200.00    &2.20     &43.88    &1.35     &27.19    &0.57   &7.00  \\
         &$2^{5}$   &$2^{8}$   &103.25       &383.00    &20.27    &64.97    &10.88    &33.41    &4.66   &7.00   \\
         &$2^{6}$   &$2^{9}$   &2355.98      &728.00    &380.77   &99.75    &172.78   &44.33    &48.52  &7.00   \\
\midrule
\multirow{4}{*}{(0.4, 0.5)}
         &$2^{3}$   &$2^{6}$   &0.83         &73.00     &0.14     &22.00    &0.11     &17.38    &0.10   &7.00  \\
         &$2^{4}$   &$2^{7}$   &5.18         &126.00    &1.52     &29.44    &1.09     &20.94    &0.59   &8.00  \\
         &$2^{5}$   &$2^{8}$   &62.06        &210.00    &12.34    &38.41    &8.10     &24.38    &5.16   &8.00   \\
         &$2^{6}$   &$2^{9}$   &1160.12      &345.00    &190.34   &48.98    &117.38   &29.94    &53.99  &8.00   \\
\midrule
\multirow{4}{*}{(0.8, 0.9)}
         &$2^{3}$   &$2^{6}$   &0.22         &42.00    &0.10    &15.00    &0.09    &14.00    &0.11   &8.00  \\
         &$2^{4}$   &$2^{7}$   &2.13         &55.00    &0.85    &16.00    &0.75    &14.00    &0.59   &8.00  \\
         &$2^{5}$   &$2^{8}$   &19.28        &68.00    &5.59    &16.00    &5.35    &15.00    &5.18   &8.00   \\
         &$2^{6}$   &$2^{9}$   &274.09       &78.00    &64.39   &15.00    &63.16   &15.00    &54.07  &8.00   \\  
\midrule         
 \multirow{4}{*}{(0.1, 0.9)}
         &$2^{3}$   &$2^{6}$   &1.12         &246.88    &0.28     &47.88    &0.18     &30.25    &0.09   &6.00  \\
         &$2^{4}$   &$2^{7}$   &17.39        &473.94    &3.20     &67.13    &1.74     &35.69    &0.52   &7.00  \\
         &$2^{5}$   &$2^{8}$   &230.29       &843.00    &27.98    &90.72    &13.09    &40.97    &4.59   &7.00   \\
         &$2^{6}$   &$2^{9}$   &$>3000$      &$\dagger$ &430.85   &114.83   &189.47   &48.98    &48.48  &7.00   \\         
\bottomrule
\end{tabular}
\end{table}

\begin{table}[t]%
\centering
\tabcolsep=2.6pt
\renewcommand{\arraystretch}{0.8}
\caption{The comparisons between the GMRES and PGMRES methods with the aforementioned preconditioners for solving Example \ref{ex_1} with $k_{1,+}=19$, $k_{1,-}=21$, $k_{2,+}=21$, $k_{2,-}=23$ at $T=1$.} 
\label{table2}
\begin{tabular}{ccccccccccc}
\toprule
\multirow{2}{*}{$(\alpha,\beta)$} & \multirow{2}{*}{$M$}  & \multirow{2}{*}{$n+1$}  &  \multicolumn{2}{c}{GMRES} &  \multicolumn{2}{c}{PGMRES(T)} & \multicolumn{2}{c}{PGMRES(S)}  & \multicolumn{2}{c}{PGMRES($\tau$)} \\  \cmidrule(r){4-5} \cmidrule(r){6-7} \cmidrule(r){8-9}\cmidrule(r){10-11}
     &    &  &CPU(s)  &Iter &CPU(s)  &Iter &CPU(s)  &Iter &CPU(s)  &Iter  \\
\midrule
\multirow{4}{*}{(0.1, 0.2)}
         &$2^{3}$   &$2^{6}$   &3.62         &393.00     &0.41     &42.75    &0.21     &23.00    &0.12   &6.00  \\
         &$2^{4}$   &$2^{7}$   &61.46        &1220.00    &5.04     &75.00    &2.05     &31.00    &0.58   &6.00  \\
         &$2^{5}$   &$2^{8}$   &1414.67      &4059.50    &55.52    &143.00   &14.55    &37.00    &5.15   &6.00   \\
         &$2^{6}$   &$2^{9}$   &$>3000$      &$\dagger$  &1209.13  &266.00   &233.65   &51.00    &50.60  &6.00   \\ 
\midrule
\multirow{4}{*}{(0.4, 0.5)}
         &$2^{3}$   &$2^{6}$   &1.94         &172.00     &0.27     &30.00    &0.18     &20.00    &0.14   &8.00  \\
         &$2^{4}$   &$2^{7}$   &21.66        &400.00     &3.07     &45.00    &1.92     &28.00    &0.72   &8.00  \\
         &$2^{5}$   &$2^{8}$   &341.29       &964.50     &24.08    &62.00    &12.49    &32.00    &6.25   &8.00   \\
         &$2^{6}$   &$2^{9}$   &$>3000$      &$\dagger$  &402.26   &87.00    &171.36   &37.00    &62.69  &8.00   \\ 
\midrule
\multirow{4}{*}{(0.8, 0.9)}
         &$2^{3}$   &$2^{6}$   &0.57       &81.00     &0.18     &20.00   &0.16     &18.00    &0.18   &11.00  \\
         &$2^{4}$   &$2^{7}$   &6.73       &121.00    &1.59     &23.00   &1.27     &19.00    &0.95   &11.00  \\
         &$2^{5}$   &$2^{8}$   &62.86      &176.00    &9.87     &24.00   &9.16     &22.00    &8.19   &11.00   \\
         &$2^{6}$   &$2^{9}$   &907.79     &228.00    &117.01   &25.00   &111.24   &23.00    &81.14  &11.00   \\  
\midrule         
 \multirow{4}{*}{(0.1, 0.9)}
         &$2^{3}$   &$2^{6}$   &3.23         &471.00     &0.47     &58.00    &0.45     &56.00    &0.14   &8.00  \\
         &$2^{4}$   &$2^{7}$   &106.87       &1969.00    &5.66     &88.00    &4.44     &70.00    &0.80   &9.00  \\
         &$2^{5}$   &$2^{8}$   &2308.52      &6589.56    &53.12    &139.00   &38.15    &100.00   &6.89   &9.00   \\
         &$2^{6}$   &$2^{9}$   &$>3000$      &$\dagger$  &944.60   &210.00   &566.72   &122.94   &68.92  &9.00   \\           
\bottomrule
\end{tabular}
\end{table}

\begin{example} \label{ex_2}
Consider problem (\ref{RLFDEs}) in three dimensions with $\Omega=(0,1)^3$, $T=1$. The exact solution is
\begin{equation*}
u(x, y, t)= \sin(t+1) x^{2}(1-x)^{2} y^{2}(1-y)^{2} z^{2}(1-z)^{2},
\end{equation*}
and the source term
{\small
\begin{align}   \nonumber
g(x, y, t)=& \cos(t+1) x^{2}(1-x)^{2} y^{2}(1-y)^{2} z^{2}(1-z)^{2} - \sin(t+1) \times\\  \nonumber
& \Big\{y^{2}(1-y)^{2}z^{2}(1-z)^{2} \sum_{k=0}^{2}(-1)^{2-k}  C_{2}^{k} \frac{\Gamma(5-k)}{\Gamma(3-k+\alpha)} \Big[k_{1,+}x^{2-k+\alpha}+k_{1,-}(1-x)^{2-k+\alpha}\Big]\\ \nonumber
&+x^{2}(1-x)^{2} z^{2}(1-z)^{2}\sum_{k=0}^{2}(-1)^{2-k} C_{2}^{k} \frac{\Gamma(5-k)}{\Gamma(3-k+\beta)} \Big[k_{2,+}y^{2-k+\beta}+k_{2,-}(1-y)^{2-k+\beta}\Big]\\ \nonumber
&+ x^{2}(1-x)^{2} y^{2}(1-y)^{2} \sum_{k=0}^{2}(-1)^{2-k} C_{2}^{k} \frac{\Gamma(5-k)}{\Gamma(3-k+\gamma)} \Big[k_{3,+}z^{2-k+\gamma}+k_{3,-}(1-z)^{2-k+\gamma}\Big]\Big\}.
\end{align}
}
\end{example}   

Similarly, for the 3D problem displayed in Example \ref{ex_2}, both symmetric and non-symmetric cases are discussed with $k_{1,+}=k_{1,-}=k_{2,+}=k_{2,-}=k_{3,+}=k_{3,-}=5$ and $k_{1,+}=19$, $k_{1,-}=21$, $k_{2,+}=21$, $k_{2,-}=23$, $k_{3,+}=23$, $k_{3,-}=25$, respectively. Numerical results for these two cases are reported in Tables \ref{table3} and \ref{table4}. 
Consistent with the 2D case, PCG($\tau$)/PGMRES($\tau$) for the 3D case also outperforms other methods in terms of both iteration counts and CPU time across varying fractional orders. Compared to PCG(T)/PGMRES(T) and PCG(S)/PGMRES(S) whose iteration counts grow significantly, PCG($\tau$)/PGMRES($\tau$) maintains the fastest convergence with nearly constant iterations and lowest CPU time.

In conclusion, the consistent superiority of PCG($\tau$)/PGMRES($\tau$) in both symmetric and non-symmetric cases validates the theoretical analysis and demonstrates the efficiency and robustness of the proposed preconditioner.

\begin{table}[t]%
\centering
\tabcolsep=4.2pt
\renewcommand{\arraystretch}{0.5}
\caption{The comparisons between the CG and PCG methods with the aforementioned preconditioners for solving Example \ref{ex_2} with $k_{1,+}=k_{1,-}=k_{2,+}=k_{2,-}=k_{3,+}=k_{3,-}=5$ at $T=1$.} 
\label{table3}
\begin{tabular}{ccccccccccc}
\toprule
\multirow{2}{*}{$(\alpha_1,\alpha_2,\alpha_3)$} & \multirow{2}{*}{$M$}  & \multirow{2}{*}{$n+1$}  &  \multicolumn{2}{c}{CG} & \multicolumn{2}{c}{PCG(T)} & \multicolumn{2}{c}{PCG(S)} & \multicolumn{2}{c}{PCG($\tau$)} \\  \cmidrule(r){4-5} \cmidrule(r){6-7} \cmidrule(r){8-9} \cmidrule(r){10-11}
     &    &  &CPU  &Iter &CPU  &Iter &CPU  &Iter &CPU  &Iter \\
\midrule
\multirow{4}{*}{(0.1, 0.2, 0.3)}
         &$2^{2}$   &$2^{3}$   &0.03     &17.00       &0.02    &12.00    &0.03    &13.00    &0.05   &5.00  \\
         &$2^{3}$   &$2^{4}$   &0.33     &34.00       &0.24    &18.00    &0.20    &17.00    &0.22   &6.00  \\
         &$2^{4}$   &$2^{5}$   &6.53     &66.00       &3.17    &26.00    &2.70    &21.44    &1.37   &6.00  \\
         &$2^{5}$   &$2^{6}$   &351.13   &126.00      &120.77  &38.00    &85.84   &26.97    &39.86  &7.00   \\
\midrule
\multirow{4}{*}{(0.4, 0.5, 0.6)}
         &$2^{2}$   &$2^{3}$   &0.02     &16.00       &0.02    &10.00    &0.02    &11.00    &0.04   &6.00  \\
         &$2^{3}$   &$2^{4}$   &0.22     &28.00       &0.14    &15.00    &0.14    &14.00    &0.17   &7.00  \\
         &$2^{4}$   &$2^{5}$   &4.66     &47.00       &2.52    &20.00    &2.18    &17.00    &1.55   &8.00  \\
         &$2^{5}$   &$2^{6}$   &223.27   &79.00       &82.58   &26.00    &65.39   &20.00    &44.29  &8.00   \\
\midrule
\multirow{4}{*}{(0.7, 0.8, 0.9)}
         &$2^{2}$   &$2^{3}$   &0.02     &16.00      &0.02    &9.00     &0.02    &11.00    &0.04   &7.00  \\
         &$2^{3}$   &$2^{4}$   &0.19     &24.00      &0.12    &12.00    &0.13    &13.00    &0.19   &8.00  \\
         &$2^{4}$   &$2^{5}$   &3.40     &34.00      &1.83    &14.00    &1.89    &14.00    &1.60   &8.00  \\
         &$2^{5}$   &$2^{6}$   &127.32   &44.00      &54.31   &16.00    &51.00   &15.00    &43.91  &8.00   \\  
\midrule
\multirow{4}{*}{(0.1, 0.5, 0.9)}
         &$2^{2}$   &$2^{3}$   &0.05     &32.75       &0.03    &15.00    &0.02    &17.00    &0.04   &6.00  \\
         &$2^{3}$   &$2^{4}$   &0.48     &65.25       &0.22    &23.00    &0.23    &24.88    &0.15   &6.00  \\
         &$2^{4}$   &$2^{5}$   &11.25    &118.75      &4.17    &35.00    &3.84    &31.75    &1.38   &7.00  \\
         &$2^{5}$   &$2^{6}$   &602.17   &218.00      &161.34  &52.00    &114.31  &36.97    &39.62  &7.00   \\ 
\bottomrule
\end{tabular}
\end{table}

\begin{table}[t]%
\centering
\tabcolsep=3.2pt
\renewcommand{\arraystretch}{0.5}
\caption{The comparisons between the GMRES and PGMRES methods with the aforementioned preconditioners for solving Example \ref{ex_2} with $k_{1,+}=19$, $k_{1,-}=21$, $k_{2,+}=21$, $k_{2,-}=23$, $k_{3,+}=23$, $k_{3,-}=25$ at $T=1$.} 
\label{table4}
\begin{tabular}{ccccccccccc}
\toprule
\multirow{2}{*}{$(\alpha_1,\alpha_2,\alpha_3)$} & \multirow{2}{*}{$M$}  & \multirow{2}{*}{$n+1$}  &  \multicolumn{2}{c}{GMRES} & \multicolumn{2}{c}{PGMRES(T)} & \multicolumn{2}{c}{PGMRES(S)} & \multicolumn{2}{c}{PGMRES($\tau$)} \\  \cmidrule(r){4-5} \cmidrule(r){6-7} \cmidrule(r){8-9} \cmidrule(r){10-11}
     &    &  &CPU  &Iter &CPU  &Iter &CPU  &Iter &CPU  &Iter \\
\midrule
\multirow{4}{*}{(0.1, 0.2, 0.3)}
         &$2^{2}$   &$2^{3}$   &0.03     &21.00       &0.02    &13.00    &0.02    &14.00    &0.05   &6.00  \\
         &$2^{3}$   &$2^{4}$   &0.54     &50.00       &0.23    &20.00    &0.22    &18.00    &0.19   &6.00  \\
         &$2^{4}$   &$2^{5}$   &12.47    &103.00      &4.51    &32.00    &3.89    &27.00    &1.42   &6.00  \\
         &$2^{5}$   &$2^{6}$   &974.58   &308.00      &178.62  &51.00    &116.58  &33.00    &42.31  &7.00   \\
\midrule
\multirow{4}{*}{(0.4, 0.5, 0.6)}
         &$2^{2}$   &$2^{3}$   &0.03     &19.00       &0.02    &12.00    &0.02    &13.00    &0.05   &7.00  \\
         &$2^{3}$   &$2^{4}$   &0.37     &37.00       &0.21    &18.00    &0.20    &17.00    &0.22   &8.00  \\
         &$2^{4}$   &$2^{5}$   &8.37     &69.00       &3.63    &25.00    &3.28    &22.00    &1.78   &8.00  \\
         &$2^{5}$   &$2^{6}$   &435.91   &138.00      &118.73  &33.00    &98.78   &28.00    &48.96  &8.00   \\
\midrule
\multirow{4}{*}{(0.7, 0.8, 0.9)}
         &$2^{2}$   &$2^{3}$   &0.03     &19.00      &0.02    &11.50    &0.02    &13.00    &0.06   &9.00  \\
         &$2^{3}$   &$2^{4}$   &0.32     &33.00      &0.17    &15.00    &0.19    &16.00    &0.27   &10.00  \\
         &$2^{4}$   &$2^{5}$   &6.05     &50.00      &2.71    &19.00    &2.74    &19.00    &2.11   &10.00  \\
         &$2^{5}$   &$2^{6}$   &236.23   &75.00      &82.43   &22.00    &82.13   &22.00    &58.24  &10.00   \\  
\midrule
\multirow{4}{*}{(0.1, 0.5, 0.9)}
         &$2^{2}$   &$2^{3}$   &0.06     &40.50       &0.03    &17.00    &0.04    &22.00    &0.05   &7.00  \\
         &$2^{3}$   &$2^{4}$   &0.71     &75.00       &0.30    &26.00    &0.38    &33.50    &0.22   &7.00  \\
         &$2^{4}$   &$2^{5}$   &20.43    &171.00      &5.65    &40.00    &6.87    &47.00    &1.73   &8.00  \\
         &$2^{5}$   &$2^{6}$   &1416.09  &447.00      &217.46  &62.00    &209.19  &59.00    &49.56  &8.00   \\ 
\bottomrule
\end{tabular}
\end{table}

\section{Conclusions}\label{sec:con}
In this paper, we propose sine transform-based preconditioning techniques for the CN-FV scheme of 3D conservative SFDEs. Theoretically, we prove that the proposed preconditioner enables the PCG and PGMRES methods to achieve linear convergence rates for symmetric and non-symmetric cases, respectively.
Moreover, our preconditioning techniques not only extend the applicability of the sine transform-based preconditioning techniques from the FD method to the FV method, but also significantly enhance the performance of the circulant preconditioners proposed in \cite{fu2019stability,fu2019finite}, as demonstrated by numerical results. In future work, we will focus on solving high-dimensional conservative SFDEs with variable coefficients.

\section*{Acknowledgments}
The work of Wei Qu was supported by the research grant 2024KTSCX069 from the Characteristic Innovation Projects of Ordinary Colleges and Universities in Guangdong Province. The work of Siu-Long Lei was supported by the research grants MYRG-GRG2024-00237-FST-UMDF and MYRG-GRG2023-00181-FST-UMDF from University of Macau. The work of Sean Y. Hon was supported in part by NSFC under grant 12401544, the Guangdong and Hong Kong Universities ``1+1+1'' Joint Research Collaboration Scheme (project no. 2025A0505000014), and a start-up grant from the Croucher Foundation.

\bibliographystyle{elsarticle-num-names}
\bibliography{ref}
\end{document}